\def\ii{{\sqrt{-1}}}
\def\tX{{\tilde{X}}}
\def\cE{{\mathcal{E}}}
\def\hell{{\hat{\ell}}}
\def\ee{\mathrm e}
\def\wg{{\mathrm w}_g}
\def\cI{{\mathcal{I}}}
\def\wdeg{{\mathrm{deg}_\mathrm{w}}}
\def\mwdeg{{\mathrm{deg}_{\mathrm{w}^{-1}}}}
\def\ldeg{{\mathrm{deg}_{\lambda}}}
\def\uab{ {w}}
\def\bs{{\bold{s}}}
\def\CC{{\mathbb C}}
\def\JJ{{\mathcal J}}
\def\hJJ{{\hat{\mathcal J}}}
\def\hkappa{{\hat{\kappa}}}
\def\TT{{\mathbb T}}
\def\hPi{{\hat \Pi}}
\def\hw{{\hat w}}
\def\ZZ{{\mathbb Z}}
\def\QQ{{\mathbb Q}}
\def\RR{{\mathbb R}}
\def\cO{{\mathcal{O}}}
\def\WW{{\mathcal{W}}}
\def\SSS{{\mathcal{S}}}
\def\nuI{{{\nu^{I}}}}
\def\hnuI{{\hat{\nu^{I}}}}
\def\nuII{{{\nu^{II}}}}
\def\Edelta{{{\alpha}}}
\def\Rdelta{{{\delta_\mathrm{R}}}}
\def\LA{\langle}
\def\RA{\rangle}
\def\hzeta{\hat{\zeta}}
\newtheorem{definition}{Definition}[section]
\newtheorem{theorem}[definition]{Theorem}
\newtheorem{proposition}[definition]{Proposition}
\newtheorem{corollary}[definition]{Corollary}
\newtheorem{remark}[definition]{Remark}
\newtheorem{lemma}[definition]{Lemma}
\def\book#1{\rm{#1}, }
\def\paper#1{\textit{#1}, }
\def\jour#1{\rm{#1}, }
\def\yr#1{({\rm{#1}) }}
\def\vol#1{\textbf{#1}}
\def\pages#1{\rm{#1}}
\def\page#1{\rm{#1}}
\def\publ#1{\rm{#1}, }
\def\by#1{{\rm{#1}, }}
\begin{document}

\title[Prime form and sigma function]{Relationship between the Prime form
and the sigma function for some cyclic (r,s) curves.}

\author{John \textsc{Gibbons},
Shigeki \textsc{Matsutani} and Yoshihiro \textsc{\^Onishi}}

\begin{abstract}
In this article, we study some cyclic $(r,s)$ curves
{
$X$ given by
}
$$y^r =x^s + \lambda_{1} x^{s-1} + \cdots +
\lambda_{s-1} x + \lambda_s.$$ We give an expression for the prime form
{
$\cE(P,Q)$, where $(P, Q \in X),$
}
in terms of the sigma function for some such curves, specifically
any hyperelliptic curve $(r,s) = (2, 2g+1)$
as well as the cyclic trigonal curve $(r,s) = (3,4)$,
$$
\cE(P,Q) =\frac{ \sigma_{\natural_{r}}(u -  v)}{\sqrt{du_1}\sqrt{d v_1}},
$$
where $\natural_r$ is a certain multi-index of differentials.
{
Here $u_1$ and $v_1$ are respectively the first components of
$u = w(P)$ and $v = w(Q)$ which are
given by the Abel map $w: X \to \CC^g$, where $g$ is the genus of $X$.
These explicit formulae are useful in applications, for instance to the 
problem of constructing classes of Schwarz-Christoffel maps to slit domains.
}
\end{abstract}

{\bf{Primary 14K25; Secondary 14H55, 20C30, 37K15.}}

{\bf{$\sigma$-function, prime form}}

\maketitle

\section{Introduction}
The Prime form is the fundamental $(-1/2, -1/2)$-form characterising a 
compact Riemann surface, which governs its geometrical properties;
let us consider a compact Riemann surface $X$ and its
universal covering $\tX$.
The prime form $E(P, Q)$ for a pair of points $P, Q \in \tX$ is defined by
$$
E(P, Q) := \frac{\theta
     \left[\begin{array}{c} \Edelta'\\ \Edelta''\end{array}\right]
               (\int^P_Q \hat \nuI)}{
               \sqrt{\zeta(P)} \sqrt{\zeta(Q)}},
$$
where $[\Edelta' \Edelta'']$ are a fixed nonsingular odd theta
characteristic, $\theta[\delta](z)$ is a Riemann theta function
and $\zeta(P)$ is
$$
\zeta(P) := \sum_{i=1}^g
\left(\partial_i\theta\left[\begin{array}{c} \Edelta'\\ \Edelta''\end{array}
          \right] (0)\right) \hat \nuI_i(P),
$$
where $\hat \nuI_i(P)$ is the vector of normalized holomorphic one forms
over $X$.
Nakayashiki gives an alternative normalisation of the prime form
such that
$$
\cE(P,Q):= \ee^{^t(w(P)-w(Q))\gamma (w(P)-w(Q)))} E(P,Q) 
$$
where {$w$ is the Abel map,} 
and $\gamma$ is a certain constant matrix for the curve,
which will be specified below.

Fay investigated the relations between the Riemann theta function and
the prime form, and found the so-called Fay trisecant formula \cite{F}.
Nakayashiki has also showed that the Kleinian sigma function for
an $(r,s)$ curve is also expressed explicitly 
in terms of the prime form \cite{N2}.

In this article, we express the prime form, in Nakayashiki's normalisation,
in terms of the sigma function, for every hyperelliptic 
$(2, 2g+1)$
curve
as well as for  a cyclic $(3,4)$ trigonal curve;
this was alluded to in the Concluding Remark in \cite{N1}.

Recently Nakayashiki \cite{N2}, as well as
Enolskii, Eilbeck and Gibbons \cite{EEG10}
have given the characterization of the sigma function in terms of the 
tau function for an algebraic curve.
Since, due to the definition of sigma function in terms of the theta function,
we have transformation formulae among theta functions, tau functions and
sigma functions, they are basically the same functions.
However the sigma and the tau functions can be written in terms of 
Schur functions, 
and both are connected to the Sato theory \cite{Mat,N2,EEG10}.
Further, unlike the theta functions, the sigma functions, 
as well as Nakayashiki's definition of the Prime
form, are modular invariant for action of Sp($g, \ZZ$).

Moreover, we will note that as a characterization of sigma, the sigma 
function could be said to be {\lq\lq}algebraic{\rq\rq} due to the 
studies \cite{B1,B2,B3,BEL,BLE,EEG10,EEL,EEMOP,KMP,Mat,MP1,MP2,O1,O2}.
In particular, in contrast to the theta functions, 
the Taylor series of the sigma function about the origin has 
coefficients which are {\it polynomials} in the parameters of the curve.

Historically, the original version of the sigma function was 
introduced by Weierstrass in order to express symmetric functions
on an affine curve associated with a Riemann surface.
Klein developed this idea further, to provide the data of the affine 
coordinate via the Jacobi inversion formulae, generalising the 
following property of the Weierstrass sigma function; the genus one 
$\sigma(u)$ function gives 
$\wp(u) = - d^2 \log \sigma(u)/ d u^2$ and $\wp(u)' = d \wp(u)/d u$
which recover the differential equation $(\wp(u)')^2 = 4 \wp(u)^3 + g_2 \wp(u) + g_3$.
In other words, by letting $y=2\wp(u)'$ and $x=\wp(u) - c$ with a certain
constant $c$, this differential equation is identified with the equation
of the affine curve:
$$y^2 = x^3 + \lambda_2 x^2 + \lambda_1 x +\lambda_0.$$

As in \cite{EEMOP}, the addition formulae for the hyperelliptic curves 
in terms of sigma functions are given by the meromorphic functions on 
the affine hyperelliptic curve, and they generalise the genus one 
addition formula:
\begin{equation}
    \frac{\sigma(u+v)\sigma(u -v)}{\sigma(u)^2\sigma(v)^2}
     = \wp(u) - \wp(v).
\label{eq:add_g1}
\end{equation}
Since for genus $g > 1$ case, the dimension of the Jacobi variety
differs from the dimension of the curve itself, some properties of the
sigma function of genus one cannot survive unchanged in the case of 
higher genus curves.
 
However recently we have discovered that for some cyclic $(r,s)$ curves,
$X$, given by equations of the form:
$$y^r =x^s + \lambda_{1} x^{s-1} + \cdots +
\lambda_{s-1} x + \lambda_s,$$
certain higher derivatives
of the sigma function provide natural generalisations
of important properties of the sigma function of genus one.
\begin{enumerate}
\item The generalized Frobenius-Stickelberger
\footnote{The formula which we call the Frobenius-Stickelberger 
relation 
seems to have been essentially discovered by Hermite (See \cite[p.458]{WW}).
However, the paper \cite{FS} is the first which is explicitly written 
in terms of the \(\sigma\) function, and we use its authors' names. 
} 
relation in Propositions
 \ref{prop:FSH} and \ref{prop:FSRTC} 
shows a simple connection between the affine coordinate ring $R_g$
and the coordinate of the Jacobi variety $\JJ_g$;
this is a natural generalization of a relation found in the elliptic case
by Frobenius and Stickelberger \cite{FS}.
\item As the Jacobi inversion formula, 
\cite{MP2} shows, 
$x = -\sigma_{\natural_1^{(1)}}(u)/\sigma_{\natural_1}(u)$ for any $(r,s)$
curve, while further, we have 
$y = (d x / d u_1)/2$ for a hyperelliptic 
$(2, 2g+1)$ curve, 
or  $y = (d x / d u_3)/3$ for a (3,4)-curve;
these can be understood as
direct generalizations of the above genus one relation.
\item Further, as in Propositions \ref{lm:add2H} and \ref{prop:add2T}, 
we have a more direct generalization of (\ref{eq:add_g1}).
Hence we believe that it is very important to rewrite the 
formulae of a Riemann surface in terms of sigma functions because
they are more directly connected with
algebraic properties of the affine ring $R_g$.
\end{enumerate}

The paper is organised as follows. In the next section, we will review the
theory of cyclic $(r,s)$ curves, introducing the key ideas and fixing our
notation. Section 3 generalises Mumford's construction of meromorphic functions
on hyperelliptic curves to general cyclic $(r,s)$ curves; here we introduce
the Frobenius-Stickelberger matrices and their determinants. Section 4 reviews
the $\sigma$-function and its properties. The following section is concerned
in particular with the vanishing properties of $\sigma$ and certain of its
derivatives. From these results, it is easy to restate the generalisation
of the Frobenius-Stickelberger relation to general hyperelliptic curves,
as well as the simplest cyclic trigonal curves, 
which is done in section 6. 
In section 7, the Prime form is introduced; 
the Frobenius-Stickelberger results are
then used to give a simple expression for this in terms of derivatives of
$\sigma$. One of the motivations of this study was to permit the 
explicit construction of Schwartz-Christoffel maps giving formulae for 
certain reductions of the Benney hierarchy \cite{BG1, BG2, G}; 
we will discuss this briefly in the last section. 

Recently Nakayashiki \cite{N3,NY} has also independently generalized 
the expression of the prime form in terms of the sigma function to 
every $(r,s)$ curve, though his proof, and the form of his results, 
differ in detail from those given here.  
As the general $(r,s)$ curve lacks the cyclic symmetry 
of the curves studied here, the results in the cyclic 
case can be expressed in a different, and in some ways simpler, form; 
our results explicitly use this
symmetry. As the results for cyclic curves have applications to problems
such as conformal mappings and the reductions of Benney's equations, 
it is worth stating these separately.

\bigskip

This work was started at an international conference 
in Hanse Wissenschaftskolleg, Delmenhorst,  
organised in 2011 by Claus Laemmerzahl, 
Jutta Kunz, Emma Previato and Victor Enolskii. 
Two of the authors (J.G and S.M.) wish to thank 
HWK and the organisers for their hospitality.
After finishing this work, we heard of Nakayashiki's work on a related problem
in \cite{N3,NY}. 
We would like to thank him for discussing his work on this problem with us.

\section{Cyclic $(r,s)$ curves}\label{curves}

We briefly summarise the  conventions and  notations used in this article.
Defining a Riemann surface as in \cite[p.31]{ACGH},
we consider a cyclic $(r,s)$ Riemann surface
$$
        X:=\{(x,y) \ | \ y^r = f(x)\}\cup \infty
$$ 
whose finite part is given by an affine equation
\begin{equation}
y^r = f(x), \quad f(x) := x^s + \lambda_{1} x^{s-1} + \cdots +
\lambda_{s-1} x + \lambda_s.
\end{equation}
The positive integers $r$ and $s$ are coprime, with $r<s$; 
the complex numbers 
$\lambda_1,\ldots ,
\lambda_{s}$ are such that the finite part of $X$ is smooth and
has geometric genus $g=\frac{(r-1)(s-1)}{2}.$
In this article, we consider all hyperelliptic curves, $(r,s) = (2, 2g+1)$
as well as the cyclic trigonal curve $(r,s) = (3, 4)$.

Let $R_g:=\CC[x,y]/(y^r - f(x))$,
$\cO_X$ be the sheaf of  holomorphic functions over $X$
 and $\JJ$ the Jacobian of $X$. \label{pg:Jacobian}
$R_g=\cO_X(*\infty) $ is the ring of meromorphic functions  
on $X$ regular outside $\infty$.

We have the (monic) monomial
 $\phi_n \in R_g$ \label{pg:phi_n} for a non-negative integer $n$ so 
  that it has order of pole  $N(n)$ \label{pg:N(n)}
at $\infty$, the $n$-th integer
 in  the (increasing)  sequence complementary to the 
Weierstrass gaps:
  $\phi_0 = 1$, $\phi_1 = x$, etc.;
by letting  $t_\infty$ be a local parameter at $\infty$,
the leading term of $\phi_n$ is proportional to $t_\infty^{-N(n)}$.
By elementary computations, we see that
$N(0) = 0$,
$N(g-1) = 2g-2$,
$N(g) = 2g$ for $(r,s)$ curve.
Let $s_n$ and $r_n$ be such that $\phi_n = x^{s_n} y^{r_n}$
and then $N(n) = s_n r + s r_n$.

We introduce the w-degree, $\wdeg : R_g \to \ZZ$, \label{pg:wdeg}
which assigns to any element of $R_g$  
the order of its pole at $\infty$, 
$\wdeg(x) = r$, $\wdeg(y) = s$,
$\wdeg(\phi_n(P)) = N(n)$. 
 Further we also  consider the ring
$R_{g,\lambda}:=\QQ[x, y, \lambda_0, \ldots, \lambda_{s-1}]/(y^r - f(x))$
by regarding $\lambda$'s as indeterminates, and define a
 $\lambda$-degree,
$\ldeg:R_{g,\lambda} \to \ZZ$ as an extension of the w-degree by
assigning the degree $j r$ to  each $\lambda_j$.
The defining polynomial $y^r-f(x)$ of the curve 
is homogeneous with respect to the $\lambda$-degree.

For later convenience, we sometimes denote a point
$P \in X\backslash\infty$ by its affine coordinates $(x, y)$.
For the $k$-th symmetric product of the curve $\SSS^k(X)$, 
its element is sometimes expressed by
$(P_1,\ldots ,P_k )$ and a divisor $D=\sum_{i=1}^kP_i$.

For a local parameter $t$ at  $P$ in $X$, we denote by
$d_>(t^\ell)$  the  terms of the $t$-expansion of
a function on $X$ whose orders of zero at $P$
are greater than $\ell$ \label{pg:d_>}.

\medskip

For  the canonical bundle $K_X$  
a basis $\{\nuI_1, \ldots, \nuI_g\}$ of $H^0(X, K_X)$
is given in terms of the $\phi_i$ following 
\cite[Ch. VI, \S91]{B1},
\begin{equation}
\nuI_i:=
 \frac{\phi_{i-1}(P) d x}{r y^{r-1}}, \quad(i = 1, \ldots, g).
\label{eq:1stkind}
\end{equation}
For a hyperelliptic curve, these are
\begin{equation}
\nuI_1= \frac{d x}{2 y}, \quad
\nuI_2= \frac{x d x}{2 y}, \quad \cdots, \quad
\nuI_g= \frac{x^{g-1}d x}{2 y}, \quad
\label{eq:1stkindHC}
\end{equation}
and for the $(3,4)$ curve case,
\begin{equation}
\nuI_1= \frac{d x}{3 y^2}, \quad
\nuI_2= \frac{xd x}{3 y^2}, \quad
\nuI_3= \frac{d x}{3 y}. \quad
\label{eq:1stkind34}
\end{equation}

For a one-form $\nu$ which is 
expressed by  $(t_\infty^n + d_>(t_\infty^n)) d t_\infty$ 
in terms of the local parameter $t_\infty$ at $\infty$, 
we extend the w-degree to one-forms
$\wdeg(\nu) = - n$.
Since 
\begin{equation}
 \nuI_i = t_\infty^{2g-N(i-1)-2}(1+d_{>0}(t_\infty)) d t_\infty,
\label{eq:dnu_tinf}
\end{equation}
we have
$$
 \mwdeg(\nuI_i) = 2g-N(i-1)-2,
$$
where $\mwdeg(f) = - \wdeg(f)$.

We take a homology basis 
$   \alpha_i, \beta_j$  $ (1\leqq i, j\leqq g)$
of $H_1(X,\ZZ)$ with intersection pairing
$\alpha_i\cdot\alpha_j=\beta_i\cdot\beta_j= 0$,
$\alpha_i\cdot\beta_j=\delta_{ij}$.
We have the half period matrices,
\begin{equation}
   \left[\,\omega'  \ \omega''  \right]= 
\frac{1}{2}\left[\int_{\alpha_i}\nuI_j \ \ \int_{\beta_i}\nuI_j
\right]_{i,j=1,2, \ldots, g},
\label{eq:omega}
\end{equation} 
and the period lattice $\Pi$ generated by
$\left[\,2\omega'  \ 2\omega''  \right]$.
Since the Jacobian $\JJ$ of $X$ is given by 
$\JJ= \CC^g/\Pi$, we have a natural projection $\kappa : \CC^g \to \JJ$.

For a positive integer $k$, 
the Abel map $\uab : \SSS^k(X) \to \CC^g$ is defined by
$\uab:(P_1,\ldots ,P_k)\mapsto \uab(P_1,\ldots ,P_k)
=\sum_{i=1}^k\int_{\infty}^{P_i}\nuI\in\mathbb{C}^g$
with base-point $\infty$:
\label{pg:Abelmap}
\begin{equation}
    \WW^k := \kappa\, \uab(\SSS^k(X)) \subset\JJ .
\label{eq:W_k1}
\end{equation}
We note that the Abel map $\uab$ has a 
$\Pi$-ambiguity coming from the choice of path of integration.

For later convenience, we also introduce the normalized
periodic matrix  $\left[1 \  \TT \right]$ with
$\TT:={\omega'}^{-1} \omega''$,
the normalized one form $\hnuI := {\omega'}^{-1} \nuI$, 
the normalized period lattice $\hPi$ generated by $\left[1 \  \TT \right]$,
the normalized Jacobian  $\hJJ:=\CC^g/\hPi$,
the natural projection $\hkappa : \CC^g \to \hJJ$,
and the normalized Abel map $\hw(P) := {\omega'}^{-1} w(P)$.
Since $X$ is a non-singular curve,  
${\omega'}$ is a non-singular matrix, and hence
the correspondence  between $\hJJ$ and $\JJ$ is a bijection.

We also introduce  the singular locus,
$$
\SSS^n_m(X) := \{D \in \SSS^n(X) \ | \
    \mathrm{dim} | D | \ge m\},
$$
where $|D|$ is the complete linear system
$\uab^{-1}(\uab(D))$ 
\cite[IV.1]{ACGH}. \label{pg:S^k(X)}
If $n< g$, the singular locus of $\SSS^n(X)$ 
modulo linear equivalence, or on
 projecting to the Picard group,
is $\SSS_1^n(X) $ 
 \cite[Ch. IV, Proposition 4.2, Corollary 4.5,
where our $\SSS^n(X)$ is $C^0_n$]{ACGH}. We let 
$\mathcal{W}^n_m:=\kappa w(\mathcal{S}^n_m (X))$.
\label{pg:W^n_m}

\medskip

Since the sigma function is connected with the $\tau$ function
in Sato theory due to \cite{N1,EEG10} and the $\tau$ function
is written for a Young diagram,
we construct a Young diagram (cf., e.g., \cite{Sa, BEL})
$\Lambda$ from the Weierstrass gap sequence: \label{pg:Lambda}
from the top  down, $1\le i\le g$, the rows have length:
$$
        \Lambda_i = N(g) - N(i-1)  -g + i -1=g-N(i-1)+(i-1).
$$
We define $\Lambda_j\equiv 0$ for all $j>g$.
It is known that  for any $(r,s)$ curve, we have \cite{Sa, BEL})
$$
        |\Lambda| = \sum_{i=1}^g \Lambda_i = \frac{1}{24}(r^2 - 1) (s^2 -
        1).
$$

We give two examples: 
For the case  $(r, s) = (2, 9)$ (Table 2.1), 
and the case  $(r, s) = (3, 4)$ (Table 2.2), 
we have
\begin{equation*}
{
\vbox{
        \baselineskip =10pt
        \tabskip = 1em
        \halign{&\hfil#\hfil \cr
        \multispan7 \hfil Table 2.1 \hfil \cr
        \noalign{\smallskip}
        \noalign{\hrule height0.8pt}
        \noalign{\smallskip}
$i$ & \strut\vrule& 0 &1 & 2 & 3 & 4 & 5 & 6 & 7 \cr
\noalign{\smallskip}
\noalign{\hrule height0.3pt}
\noalign{\smallskip}
$\phi(i)$ & \strut\vrule 
&1& $x$& $x^2$ & $x^3$ &$y$ & $x^4$& $xy$ & $x^5$ \cr
$N(i)$ &
 \strut\vrule & 
 0&  2 & 4 & 6 & 8 & 9 & 10 & 11 &  \cr 
$\Lambda_i$ &
 \strut\vrule & 
-  & 4 & 3 & 2 & 1 & - & - & -  \cr 
\noalign{\smallskip}
\noalign{\hrule height0.3pt}
\noalign{\smallskip}
        \noalign{\hrule height0.8pt}
}
}
}
\quad
 \yng(4,3,2,1), \quad
\end{equation*}
\begin{equation*}
{
\vbox{
        \baselineskip =10pt
        \tabskip = 1em
        \halign{&\hfil#\hfil \cr
        \multispan7 \hfil Table 2.2 \hfil \cr
        \noalign{\smallskip}
        \noalign{\hrule height0.8pt}
        \noalign{\smallskip}
$i$ & \strut\vrule& 0 &1 & 2 & 3 &  \cr
\noalign{\smallskip}
\noalign{\hrule height0.3pt}
\noalign{\smallskip}
$\phi(i)$ & \strut\vrule 
&1& $x$& $y$ & $x^2$ \cr 
$N(i)$ &
 \strut\vrule & 
 0&  3 & 4 & 6  \cr 
$\Lambda_i$ &
 \strut\vrule & 
-  & 3 & 1 & 1  \cr 
\noalign{\smallskip}
\noalign{\hrule height0.3pt}
\noalign{\smallskip}
        \noalign{\hrule height0.8pt}
}
}
}
  \quad
 \yng(3,1,1). \quad
\end{equation*}

For a given Young diagram $\Lambda=(\Lambda_1, \Lambda_2, \ldots, 
\Lambda_\ell)$, 
the length $r$ of the diagonal is
called the {\it rank} of the partition \cite[\S 4.1, p. 51]{FH}.
Let $a_i$ and $b_i$ be the number of boxes below and to the 
right of the $i$-th box of the diagonal, reading from lower right
to upper left.
Frobenius called
$(a_1, \ldots, a_r; b_1, \ldots, b_r)$
{\it{ the characteristics of
the partition}} \cite[\S 4.1 p. 51]{FH}. \label{pg:charac.part.}
Here $a_i < a_j$ and $b_i < b_j$ for $i < j$.

\begin{lemma} For $u\in w(P)$, $P\in X$,
$$
        \mwdeg(u_i) = N(g) - N(i-1) - 1  
         = 2g - N(i-1) - 1 = \Lambda_i + g - i.
$$
\end{lemma}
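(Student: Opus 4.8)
The plan is to track $w$-degrees through the defining integral for the Abel map. Recall that $u_i = \int_\infty^P \nuI_i$, where $P$ has local parameter $t_\infty$ at infinity. From \eqref{eq:dnu_tinf} we already know that
\[
 \nuI_i = t_\infty^{2g-N(i-1)-2}\bigl(1+d_{>0}(t_\infty)\bigr)\, d t_\infty,
\]
so termwise integration from $\infty$ gives
\[
 u_i = \frac{1}{2g-N(i-1)-1}\, t_\infty^{2g-N(i-1)-1}\bigl(1+d_{>0}(t_\infty)\bigr),
\]
provided the exponent $2g-N(i-1)-1$ is nonzero — which it is, since $N(i-1)\le N(g-1)=2g-2$ for $i\le g$, so $2g-N(i-1)-1\ge 1>0$, and in particular the leading power is a genuine positive power of $t_\infty$ vanishing at $\infty$, so the integral converges and the constant of integration is zero. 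Hence the leading order of $u_i$ in $t_\infty$ is exactly $2g-N(i-1)-1$. By the definition of the $w$-degree on functions vanishing at $\infty$ (the order of zero at $\infty$, i.e. the power of $t_\infty$, counted with a sign convention so that $\mwdeg$ of such a quantity equals that power), this yields $\mwdeg(u_i) = 2g-N(i-1)-1$.

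It then remains only to reconcile the three expressions in the statement, which is pure bookkeeping with the identities already recorded in Section \ref{curves}. The equality $2g-N(i-1)-1 = N(g)-N(i-1)-1$ is immediate from $N(g)=2g$. For the last equality, recall the definition of the Young diagram rows,
\[
 \Lambda_i = g - N(i-1) + (i-1),
\]
so that $\Lambda_i + g - i = \bigl(g-N(i-1)+i-1\bigr) + g - i = 2g - N(i-1) - 1$, as required. This closes the chain of equalities.

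The only genuine subtlety — and the step I would be most careful about — is the claim that the constant of integration vanishes and that no lower-order ($t_\infty$-independent or negative-power) terms appear; this relies on $2g-N(i-1)-1$ being strictly positive for all $i=1,\dots,g$, equivalently on the bound $N(i-1)\le 2g-2$, which follows from $N(g-1)=2g-2$ (stated in the text) together with monotonicity of $N$. One should also note that $u_i$ is only defined up to the period lattice $\Pi$; but the $w$-degree is read off from the local expansion near $\infty$ along a fixed path, so the ambiguity does not affect the leading order. Everything else is a direct substitution of the already-established values $N(g)=2g$ and the formula for $\Lambda_i$.
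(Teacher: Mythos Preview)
Your proof is correct and follows exactly the approach the paper intends: the paper states this lemma without an explicit proof, but immediately afterward displays the formula \eqref{eq:v_t}, which is precisely the termwise integration of \eqref{eq:dnu_tinf} that you carry out (in the case $k=1$). Your additional care in verifying $2g-N(i-1)-1\ge 1$ from $N(g-1)=2g-2$ and the monotonicity of $N$, and in checking the bookkeeping identity $\Lambda_i+g-i=2g-N(i-1)-1$ from the definition of $\Lambda_i$, fills in details the paper leaves implicit.
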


Here we also extend the degree to $v = w(P_1,\ldots,P_k)$
for points $P_i \in X$ ($i=1,\ldots, k)$ by
$\mwdeg(v_i) = \mwdeg(u_i)$ since by letting each $P_j$ be expressed
as a local parameter $t_{\infty,j}$ around $\infty$, we  have, formally:
\begin{equation}  
 v_i = 
\frac{1}{2g-N(i-1)-1} 
\left(t_{\infty,1}^{2g-N(i-1)-1}+
\cdots
+t_{\infty,k}^{2g-N(i-1)-1}\right)
\left(1+d_{>0}(t_{\infty})\right).
\label{eq:v_t}
\end{equation} 
Here we use the multi-index convention
for $\alpha := (\alpha_1, \ldots, \alpha_g)$,
\label{pg:multi-index}
$$
        t^\alpha := t_1^{\alpha_1} t_2^{\alpha_2}
        \cdots t_g^{\alpha_g}, \quad
        \alpha = ( \alpha_1, \alpha_2, \ldots, \alpha_g),
        \quad |\alpha | := \sum_{i=1}^g  \alpha_i,
$$
and extend the definition $d_{>}(t_i^\ell)$
to the variables, $t_1,\ldots,t_g$:
$d_{>}(t^\ell)\in \{\sum_{|\alpha|>\ell} a_\alpha t^\alpha\}$.

It should be noted that
$v_g = 
\left(t_{\infty,1}+ \cdots +t_{\infty,k}\right)
\left(1+d_{>0}(t_{\infty})\right)$ and $\mwdeg(v_i) = 1$.

By Serre duality, $\dim H^0(X, D)=\deg D-g+1-
\dim H^0(X, (2g-2)\infty -D)$, we find that the Young diagram is symmetric;
the characteristics $a_i=b_i$.
We also have the fact that
$ \mwdeg(u_i) = \Lambda_i + g - i$ is the hooklength  (cf. \cite[Ch. 3]{Sa})
of the node $(1,i)$ in the Young diagram $\Lambda$.

For later convenience, we introduce  `truncated Young diagrams' 
\begin{equation}
\Lambda^{(k)}:= (\Lambda_1, \ldots, \Lambda_{k}), \quad
\Lambda^{[k]}:= (\Lambda_{k+1}, \ldots, \Lambda_{g}).
\label{eq:Lambdak}
\end{equation}

\begin{remark}\label{rmk:sigma_zeta}{\rm{
{\bf{Galois action on $X$:}}
Since there is an action of the cyclic group $\mathbb{Z}_r$ on the
curve $X$ such that for $\hzeta_r \in \mathbb{Z}_r$,
$\hzeta_r^i: (x, y) \mapsto (x, \zeta_r^i y)$ for a 
fixed primitive \(r\)-th root 
$\zeta_r$ of 1. 
Naturally we have the induced actions;
$\hzeta_r \phi_n = \zeta_r^{r_n} \phi_n$,
{
$\hzeta_r \nuI_n = \zeta_r^{r_{n-1}-r+1} \nuI_n = \zeta_r^{r_{n-1}+1} \nuI_n$ 
and
for $u \in \WW^k$,
$\hzeta_r u_n = \zeta_r^{r_{n-1}+1} u_n$.
}

{
For example, for the case of the $(3,4)$ curve,
$(r_0, r_1, r_2) = (0, 0, 1)$ and hence
$\hzeta_3 u_1 = \zeta_3 u_1$,
$\hzeta_3 u_2 = \zeta_3 u_2$,
$\hzeta_3 u_3 = \zeta_3^2 u_3$. 
}
}}
\end{remark}

\section{Meromorphic function over $(r,s)$ curve}

In \cite{MP1},
we introduced meromorphic functions on the curve,
as a generalization of the polynomial  
$U$ in Mumford's $(U,V,W)$ parameterization of
a hyperelliptic Jacobian \cite[Ch. IIIa]{Mu}.

\medskip

We introduce the Frobenius-Stickelberger (FS) matrix
and its determinant following \cite{MP1}.
Let $n$ be a positive integer  and 
$P_1, \ldots, P_n$ be in $X\backslash\infty$.
We  define the \textit{$\ell$-reduced  
Frobenius-Stickelberger} (FS) \textit{matrix} by: \label{pg:FS-matrix}
\begin{equation*}
\Psi_{n}^{(\check\ell)}(P_1, P_2, \ldots, P_n) := 
\left(
\begin{array}{ccccccc}
1 &\phi_1(P_1) & \phi_2(P_1)  &\cdots & \check{\phi_{\ell}}(P_1) 
& \cdots & \phi_{n}(P_1) \\
1 & \phi_1(P_2) & \phi_2(P_2) &\cdots & \check{\phi_{\ell}}(P_2) 
 & \cdots & \phi_{n}(P_2) \\
\vdots & \vdots & \vdots & \ddots& \vdots & \ddots& \vdots\\
1 & \phi_1(P_{n}) & \phi_2(P_{n}) &\cdots & \check{\phi_{\ell}}(P_n) 
 & \cdots&  \phi_{n}(P_{n})
\end{array}\right),
\end{equation*}
and $\psi_{n}^{(\check\ell)}(P_1, P_2, \ldots, P_n) := 
| \Psi_{n}^{(\check\ell)}(P_1, P_2, \ldots, P_n)|$
(a check on top of a letter signifies deletion).
It is also convenient to introduce the simpler notation:
\begin{equation}
\psi_{n}(P_1, \ldots, P_n)
 := |\Psi_{n}^{(\check{n})}(P_1, \ldots, P_n)|, \quad
\Psi_{n}(P_1, \ldots, P_n):= \Psi_{n}^{(\check{n})}(P_1, \ldots, P_n),
\label{eq:psin}
\end{equation}
for the un-bordered matrix.
We call this matrix the {\it{Frobenius-Stickelberger (FS) matrix}}
and its determinant the {\it{Frobenius-Stickelberger (FS) determinant}}.
These become undefined for some tuples in $(X \backslash \infty)^n$.

Further for later convenience, we will define
$$
\Phi_{n}(P_1, P_2, \ldots, P_n) := 
\left(
\begin{array}{ccccc}
1 &x_1 &x_1^2  &\cdots  & x_1^{n-1} \\
1 &x_2 &x_2^2  &\cdots  & x_2^{n-1} \\
\vdots & \vdots & \vdots & \ddots& \vdots\\
1 &x_n &x_n^2  &\cdots  & x_n^{n-1} \\
\end{array}\right),
$$
and
$\varphi_{n}(P_1, \ldots, P_n)
 := |\Phi_{n}^{(\check{n})}(P_1, \ldots, P_n)|$.

\medskip

For $n$ points $(P_i)_{i=1, \ldots, n}$ $\in X\backslash\infty$,
we find an element $\mu_n(P) :=\mu_n(P; P_1, \ldots, P_n)
:= \sum_{i=0}^{n} a_i \phi_i(P)$
 of $R_g$ associated with 
a point $P= (x,y)$ in $(X\backslash\infty )$,
$a_i \in \CC$ and $a_n = 1$,
such that $\mu_n(P)$ has a  zero at each point $P_i$ (with multiplicity, if
the $P_i$ are repeated)
and has smallest possible order of pole at $\infty$.
We obtain $\mu_n(P)$ using the FS matrix:

\begin{proposition} \label{prop:mul}
For $P, P_1, \ldots, P_n$ $\in (X\backslash\infty) \times \SSS^n(X\backslash\infty)$,
we have $\mu_n(P)$ by
$$
\mu_n(P)\equiv 
\mu_n(P; P_1,  \ldots, P_n) = 
\lim_{P_i' \to P_i}\frac{1}{\psi_{n}(P_1' , \ldots, P_n' )}
\psi_{n+1}(P_1' , \ldots, P_n' , P),
$$
where the $P_i^\prime$ are generic and
the limit is taken (irrespective of the order) for each i.
\end{proposition}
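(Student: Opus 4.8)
The plan is to exhibit the stated limit as the unique monic element of $R_g$ (in the $\phi_i$-basis, with $a_n=1$) vanishing at $P_1,\dots,P_n$ with minimal pole order at $\infty$, by a Cramer's-rule argument applied to the FS matrix. First I would treat the generic case, in which $P_1',\dots,P_n'$ are distinct and $\psi_n(P_1',\dots,P_n')\neq 0$. Expand the determinant $\psi_{n+1}(P_1',\dots,P_n',P)$ along its last row, whose entries are $1,\phi_1(P),\dots,\phi_n(P)$. This gives
$$
\psi_{n+1}(P_1',\dots,P_n',P)=\sum_{i=0}^{n}(-1)^{n+i}\,\psi_n^{(\check i)}(P_1',\dots,P_n')\,\phi_i(P),
$$
so that dividing by $\psi_n(P_1',\dots,P_n')=\psi_n^{(\check n)}(P_1',\dots,P_n')$ yields a linear combination $\sum_{i=0}^n a_i\phi_i(P)$ with $a_n=1$ (the coefficient of $\phi_n(P)$ is $\psi_n^{(\check n)}/\psi_n^{(\check n)}=1$, up to the sign bookkeeping, which I would fix so that it is exactly $+1$). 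This is visibly an element of $R_g$, since each $\phi_i\in R_g$ and the $a_i$ are constants once the $P_j'$ are fixed.

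Next I would check the vanishing: substituting $P=P_j'$ makes the last row of $\psi_{n+1}(P_1',\dots,P_n',P)$ equal to the $j$-th row, so the determinant vanishes, hence $\mu_n(P_j')=0$ for each $j$; this holds for generic tuples. For the pole order, note that as $P\to\infty$ the leading behaviour of $\sum a_i\phi_i(P)$ is governed by the largest $N(i)$ with $a_i\neq 0$; since $a_n=1$, the pole order at $\infty$ is exactly $N(n)$. That this is the \emph{smallest} possible order for a monic-in-$\phi_n$ element of $R_g$ vanishing at $n$ given points is the content of the Riemann–Roch / gap-sequence count: the space of $f\in R_g$ with pole order $\le N(n)$ at $\infty$ has dimension $n+1$, imposing $n$ vanishing conditions cuts it to dimension $1$, and normalising the $\phi_n$-coefficient to $1$ pins down $\mu_n$ uniquely; any element with strictly smaller pole order and the normalisation $a_n=1$ is impossible. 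Uniqueness then identifies the Cramer-rule expression with $\mu_n$.

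Finally I would pass to the limit $P_i'\to P_i$. Both $\psi_{n+1}(P_1',\dots,P_n',P)$ and $\psi_n(P_1',\dots,P_n')$ are holomorphic in the $P_i'$ (polynomials in the coordinates of the $P_i'$), and on the locus where $\psi_n(P_1,\dots,P_n)\neq 0$ the quotient extends continuously; when $P_i=P_j$ the limit is the familiar confluent (Wronskian-type) FS determinant, and the vanishing condition becomes vanishing with multiplicity, by differentiating the row relations. The main obstacle is precisely the degenerate locus: for tuples at which $\psi_n(P_1,\dots,P_n)=0$ the naive quotient is $0/0$, and one must argue — as the statement's phrasing ``irrespective of the order'' signals — that taking the confluent limits coordinate by coordinate in any order produces the same well-defined $\mu_n$, i.e. that the indeterminacy is removable along each $P_i'\to P_i$ degeneration. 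I would handle this by factoring out the vanishing Vandermonde/FS factors $\varphi_n$ (using the relation between $\psi_n$ and $\varphi_n$ coming from $\phi_i=x^{s_i}y^{r_i}$) so that numerator and denominator share the same vanishing factor, which then cancels; the reduced quotient is regular, and its value is independent of the path because it is a rational function of the $P_i'$ regular at the limit point. The earlier sections' description of the $\phi_i$ and $N(i)$ is exactly what makes this factorisation explicit.
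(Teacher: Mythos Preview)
The paper does not actually prove this proposition: it is quoted from \cite{MP1} as background, and no argument is given in the present text. Your Cramer--Laplace expansion argument is the standard one and is essentially correct for the generic case; the identification of the quotient with $\mu_n$ via the dimension count $\dim H^0(X,N(n)\infty)=n+1$ is exactly how the existence/uniqueness of $\mu_n$ is seen.

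One point to tighten: your ``smallest possible pole order'' paragraph slightly conflates two things. Once $a_n=1$ is imposed, the pole order is automatically $N(n)$; the real content is that $N(n)$ is the smallest $m$ for which an element of $H^0(X,m\infty)$ can vanish at $n$ generic points, and this is precisely the statement that $\psi_n(P_1,\dots,P_n)\neq 0$ generically. On the degenerate locus $\psi_n(P_1,\dots,P_n)=0$ your factorisation sketch is not quite right: the vanishing of $\psi_n$ need not come from the Vandermonde factor $\varphi_n$ (e.g.\ for the $(3,4)$ curve one can have distinct $P_1,P_2$ with $\psi_2=x_1-x_2\neq 0$ failing only at $x_1=x_2$, but in higher $n$ the FS determinant can vanish for other reasons tied to special position, cf.\ $\SSS^n_1$). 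The honest argument is that the $a_i=(-1)^{n-i}\psi_n^{(\check i)}/\psi_n$ are the unique solutions of a linear system whose coefficient matrix has full rank off a proper analytic subset, so the $a_i$ extend as rational functions on $\SSS^n(X\backslash\infty)$ and the stated limit exists and is independent of the order wherever that extension is regular; this is the sense in which the proposition is meant, and it is how \cite{MP1} treats it.
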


We introduce $\mu_{n, k}(P_1, \ldots, P_n)$ by
$$
\mu_n(P)
 = \phi_n(P) + 
\sum_{k=0}^{n-1} (-1)^{n-k}\mu_{n, k}(P_1, \ldots, P_n) \phi_k(P).
$$
Proposition \ref{prop:mul} gives the following lemma:
\begin{lemma}\label{prop:2theta2}
Let $n$ be a positive integer.
For $(P_i)_{i=1,\ldots, n}\in \SSS^n(X \backslash\infty) $,
the  function  $\mu_n$
over $X$ induces the map: 
$$
\alpha_n: 
\SSS^n(X \backslash\infty)  \to \SSS^{N(n) - n}(X), 
$$
{{i.e.}},
$(P_i)_{i=1,\ldots, n}\in \SSS^n(X \backslash\infty) $ is mapped to
an element $(Q_i)_{i=1,\ldots, N(n)-n} \in \SSS^{N(n)-n}(X)$, such that
$$
\sum_{i=1}^{n} P_i - n \infty 
\sim - \sum_{i=1}^{N(n)-n} Q_i  + (N(n)-n) \infty .
$$
\end{lemma}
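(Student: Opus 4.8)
The plan is to exploit the function-theoretic meaning of $\mu_n$ established in Proposition \ref{prop:mul}: the element $\mu_n(P;P_1,\dots,P_n)\in R_g$ is a meromorphic function on $X$ which is regular away from $\infty$, vanishes at each $P_i$, and has a pole of the smallest possible order at $\infty$. First I would determine that pole order exactly. Since $\mu_n(P)=\phi_n(P)+\sum_{k<n}(-1)^{n-k}\mu_{n,k}\phi_k(P)$ and $\wdeg(\phi_n)=N(n)$ strictly dominates $\wdeg(\phi_k)=N(k)$ for $k<n$ (the sequence $N$ being strictly increasing), the leading term of $\mu_n$ at $\infty$ is that of $\phi_n$, so $\mu_n$ has a pole of order exactly $N(n)$ at $\infty$ and no other poles. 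Hence the divisor of $\mu_n$ as a function on $X$ has total degree zero and takes the form
$$
(\mu_n) = \sum_{i=1}^{n}P_i + \sum_{j=1}^{m} Q_j - N(n)\,\infty,
$$
where $m = N(n)-n$ and the $Q_j$ are the remaining zeros of $\mu_n$ (counted with multiplicity). This defines the assignment $\alpha_n:(P_i)\mapsto(Q_j)$ into $\SSS^{N(n)-n}(X)$; I must check $N(n)-n\ge 0$, which follows since $N(n)\ge n$ for every $n$ (the $n$-th non-gap is at least $n$).

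The linear-equivalence statement is then immediate from $(\mu_n)\sim 0$: rearranging the displayed divisor of $\mu_n$ gives
$$
\sum_{i=1}^{n}P_i - n\infty \;\sim\; -\sum_{j=1}^{N(n)-n}Q_j + \bigl(N(n)-n\bigr)\infty,
$$
which is exactly the asserted relation. The one genuine subtlety is well-definedness of $\alpha_n$ as a \emph{map} on $\SSS^n(X\backslash\infty)$: a priori the function $\mu_n$, and hence its zero set $\{Q_j\}$, could depend on more than the unordered tuple $(P_1,\dots,P_n)$, or could fail to be determined at all. Proposition \ref{prop:mul} already records that $\mu_n$ is obtained as a limit of FS-determinant ratios that is independent of the order in which the points are taken and independent of the chosen generic approximants; on the open locus where the FS determinant $\psi_n(P_1,\dots,P_n)$ is nonzero this pins down $\mu_n$ uniquely up to scalar, hence pins down the divisor $\sum Q_j$ uniquely, so $\alpha_n$ is well defined there. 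I would remark that on the exceptional locus (where $\psi_n$ vanishes, i.e.\ the $P_i$ fail to impose independent conditions on the relevant linear series) one still obtains a well-defined function of smallest pole order by the limiting prescription, so $\alpha_n$ extends to all of $\SSS^n(X\backslash\infty)$; alternatively one restricts to the generic locus, which suffices for all later applications.

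The main obstacle I anticipate is precisely this bookkeeping around degeneracies — verifying that the "smallest possible order of pole" normalization genuinely singles out $\mu_n$ (equivalently, that $\dim H^0(X,N(n)\infty - \sum P_i)=1$ on the generic locus, which is a Riemann–Roch count together with the fact that $N(n)$ is a non-gap) and that the count of residual zeros is exactly $N(n)-n$ with no cancellation against the prescribed zeros at the $P_i$. Everything else is a direct reading-off of the divisor of an explicit function, so no computation beyond degree counting is needed.
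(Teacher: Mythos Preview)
Your argument is correct and matches the paper's approach exactly: the paper simply records that the lemma follows from Proposition~\ref{prop:mul} without spelling out the divisor computation, and what you have written is precisely the intended unpacking --- $\mu_n\in R_g$ has its only pole at $\infty$ of exact order $N(n)$ (since $\phi_n$ is the leading term), vanishes at the $P_i$, and the residual $N(n)-n$ zeros give the $Q_j$; the principal-divisor relation $(\mu_n)\sim 0$ then rearranges to the stated linear equivalence. Your discussion of well-definedness on degenerate loci goes somewhat beyond what the paper addresses, but is accurate.
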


\bigskip
For an effective divisor 
of degree $n$, $D\in \SSS^n(X)$, 
let $D'$  be the maximal subdivisor  of $D$ which does
not contain $\infty$,
$D = D' + (n-m) \infty $
where $\deg D'=m (\le n)$ and
$D' \in \SSS^m(X\backslash\infty)$:
we extend the map  ${\alpha}_n$ to $\SSS^n(X)$ 
by defining $\overline{\alpha}_n(D)={\alpha}_m(D^\prime )+[N(n)-n-
(N(m)-m)]\infty.$

The linear equivalence of Lemma \ref{prop:2theta2} gives:
\begin{proposition} \label{prop:addition}
For a positive integer $n$, the
Abel map composed with $\alpha_n$ induces the map 
$[-1] :  \WW^n \to  \WW^{N(n) - n}$, $(u \mapsto -u) $ 
satisfying the following commutative diagram;
$$
\begin{CD}
S^n(X -\infty) @>{\alpha_n}>> S^{N(n) - n}(X) \\
  @V{\kappa \circ w}VV @V{\kappa \circ w}VV \\
\WW^n @>{[-1]}>> \WW^{N(n) - n} \\
\end{CD},
$$
\end{proposition}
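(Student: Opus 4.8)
The plan is to verify the commutative diagram by unwinding the definitions of all four maps. The essential input is Lemma~\ref{prop:2theta2}, which tells us that $\alpha_n$ sends $(P_i)_{i=1,\ldots,n}\in\SSS^n(X\backslash\infty)$ to a divisor $(Q_i)_{i=1,\ldots,N(n)-n}\in\SSS^{N(n)-n}(X)$ with the linear equivalence
$$
\sum_{i=1}^{n}P_i-n\infty\ \sim\ -\sum_{i=1}^{N(n)-n}Q_i+(N(n)-n)\infty .
$$
First I would apply the Abel map $w$ (followed by $\kappa$) to both sides. Since linearly equivalent divisors have the same image in the Jacobian $\JJ=\CC^g/\Pi$, and since $w$ is additive on $\SSS^k(X)$ with base point $\infty$ (so that the $-n\infty$ and $+(N(n)-n)\infty$ terms contribute nothing after applying $\kappa$), this equivalence becomes
$$
\kappa\,w(P_1,\ldots,P_n)=-\,\kappa\,w(Q_1,\ldots,Q_{N(n)-n})\quad\text{in }\JJ .
$$
That is precisely the assertion that the map induced on images by $\alpha_n$ is $u\mapsto -u$; it remains only to record that this induced map is well defined as a map $\WW^n\to\WW^{N(n)-n}$.

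The well-definedness is the one point that needs a little care, so I would address it next. A priori $\alpha_n$ depends on choices (the limit of generic configurations in Proposition~\ref{prop:mul}, and the labelling of the zeros $Q_i$ of $\mu_n$ away from the prescribed $P_i$), but on the level of divisor classes all such choices agree: the divisor of zeros of $\mu_n$ on $X$ is $\sum P_i+\sum Q_i$, its divisor of poles is $N(n)\infty$, and $\mu_n\in R_g$ is determined up to scalar by the $P_i$ together with the requirement of minimal pole order, so $\sum Q_i$ is determined as an effective divisor. Hence $\kappa\,w(\sum Q_i)\in\WW^{N(n)-n}$ depends only on $(P_i)$, and in fact only on $\kappa\,w(\sum P_i)\in\WW^n$ because two configurations with the same image in $\JJ$ differ by a principal divisor, which just changes $\mu_n$ by multiplication by a meromorphic function and leaves the class of $\sum Q_i$ alone. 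This shows $[-1]:\WW^n\to\WW^{N(n)-n}$ is a genuine map and that the square commutes with $\kappa\circ w$ on both verticals and $\alpha_n$, $[-1]$ on the horizontals.

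Finally I would note that the target $\WW^{N(n)-n}$ is the correct one: it is by definition $\kappa\,w(\SSS^{N(n)-n}(X))$, and the image under $[-1]$ of $\WW^n$ lands inside it precisely because, as just argued, $-\kappa\,w(\sum P_i)=\kappa\,w(\sum Q_i)$ with $\sum Q_i\in\SSS^{N(n)-n}(X)$. (If one wishes, the extension $\overline{\alpha}_n$ to all of $\SSS^n(X)$ introduced just before the proposition handles the case where some $P_i=\infty$, with the bookkeeping of the $\infty$-multiplicities arranged exactly so that the same linear equivalence, and hence the same diagram, persists.) I do not expect any real obstacle here: the content is entirely in Lemma~\ref{prop:2theta2}, and the proposition is the routine ``pass to the Jacobian'' consequence of it; the only thing to be careful about is phrasing the well-definedness cleanly so that ``$[-1]$'' is manifestly the negation map on $\WW^n$ rather than merely \emph{a} map making the diagram commute.
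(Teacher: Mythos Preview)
Your proposal is correct and matches the paper's approach exactly: the paper gives no proof beyond the single line ``The linear equivalence of Lemma~\ref{prop:2theta2} gives:'' preceding the proposition, and your argument is precisely the standard unpacking of that remark via Abel's theorem. Your added discussion of well-definedness is more than the paper offers; the only slightly loose phrase is that $\mu_n$ is ``determined up to scalar,'' which can fail on the singular locus $\SSS^n_1$, but your conclusion is unaffected since any choice of $\mu_n$ yields residual zeros in the same linear equivalence class.
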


Let $\mathrm{image}([-1])$ be denoted by $[-1]\WW^n$.

\bigskip
For $(2, 2g+1)$ curve $y^2 = x^{2g+1} + \lambda_1 x^{2g} + 
\cdots + \lambda_{2g} x + \lambda_{2g+1}$,
the $[-1]$ operation corresponds to  the hyperelliptic involution  
$(x, y) \to (x, -y)$. This correspondence does not hold for 
cyclic $(r,s)$ curves in general.

\bigskip
For $(3,4)$
 curve,
we consider $[-1](x_1, y_1)$.
By considering the divisor of a meromorphic function of $(x, y)$,
\begin{equation*}
\mu((x,y);(x_1,y_1))\equiv
\left|\begin{array}{cc}
1 & x_1 \\
1 & x \\
\end{array}\right| =0.
\end{equation*}
This means that 
$$ 
   (x_1, y_1) + (x_1,\zeta y_1) + (x_1,\zeta^2 y_1) -3\infty \sim 0. 
$$
In other words, we see that
\begin{equation}
[-1] (x_1, y_1) =  (x_1,\zeta y_1) + (x_1,\zeta^2 y_1).  
\label{eq:[-1]}
\end{equation}
Hence, we must have
$$
  \int_{\infty}^{(x_1, y_1)} 
   \left(\begin{array}{c}
       \nuI_1\\ \nuI_2\\ \nuI_3
     \end{array}\right)
  + \int_{\infty}^{(x_1, \zeta y_1)} 
   \left(\begin{array}{c}
\zeta^2 \nuI_1\\ \zeta^2 \nuI_2\\ \zeta \nuI_3
     \end{array}\right)
  +  \int_{\infty}^{(x_1, \zeta^2 y_1)} 
   \left(\begin{array}{c}
     \zeta \nuI_1\\ \zeta \nuI_2\\ \zeta^2 \nuI_3
     \end{array}\right)
\equiv 0, \quad \mbox{modulo } \Pi.
$$

\medskip
\section{The $\sigma$-function}\label{the sigma function}

As in (\ref{eq:omega}), we also introduce the
complete Abelian integral of the second kind,
\begin{equation}
   \left[\,\eta'  \ \eta''  \right]= 
\frac{1}{2}\left[\int_{\alpha_i}\nuII_j \ \ \int_{\beta_i}\nuII_j
\right]_{i,j=1,2, \ldots, g},
   \label{eq2.5}
  \end{equation} 
where $\nuII_j=\nuII_j(x,y)$ $(j=1, 2, \cdots, g)$ 
are  differentials of the 
second kind  \cite[Corollary 2.6]{F}, which we defined algebraically
in \cite{MP1} after \cite{EEL}.

The following Proposition gives a 
  generalized Legendre relation is given as 
the following Proposition \cite{B1, BLE, EEL}.
\begin{proposition} \label{prop:gLegendreR}
The matrix
\begin{equation}
   M := \left[\begin{array}{cc}2\omega' & 2\omega'' \\ 2\eta' & 2\eta''
     \end{array}\right],
\end{equation} 
 satisfies 
\begin{equation}
   M\left[\begin{array}{cc} & -1 \\ 1 & \end{array}\right]{}^t {M}
   =2\pi\sqrt{-1}\left[\begin{array}{cc} & -1 \\ 1 &
     \end{array}\right].
   \label{eq2.7}
\end{equation} 
\end{proposition}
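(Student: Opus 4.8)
The statement to prove is the generalized Legendre relation (Proposition \ref{prop:gLegendreR}): that the $2g \times 2g$ matrix $M$ built from the period matrices of the first-kind and second-kind differentials satisfies $M J {}^t M = 2\pi\sqrt{-1}\, J$, where $J = \left[\begin{smallmatrix} 0 & -1 \\ 1 & 0 \end{smallmatrix}\right]$ in $g\times g$ block form.

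\medskip

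\textbf{Proof proposal.}

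The plan is to derive the relation from the Riemann bilinear relations applied to pairs of closed differentials on $X$. First I would recall that for any two closed one-forms $\varphi, \psi$ on the compact Riemann surface $X$ with the chosen canonical homology basis $\{\alpha_i, \beta_j\}$, one has the bilinear identity
$$
\sum_{i=1}^g \left( \int_{\alpha_i}\varphi \int_{\beta_i}\psi - \int_{\beta_i}\varphi \int_{\alpha_i}\psi \right) = \sum_{p} \res_p\left( \varphi \int^{\cdot} \psi \right) = 2\pi\sqrt{-1}\sum_p \res_p\big(\varphi\,\Psi\big),
$$
where $\Psi$ is a local primitive of $\psi$ and the sum is over the poles; when both $\varphi$ and $\psi$ are holomorphic this vanishes, and when one is of the first kind and the other of the second kind the right side localizes at $\infty$. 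I would apply this successively to the three types of pairs: (a) $\varphi = \nuI_i$, $\psi = \nuI_j$ both of the first kind, giving $\sum_k(\omega'_{ki}\omega''_{kj} - \omega''_{ki}\omega'_{kj}) = 0$, i.e. the symmetry ${}^t\omega' \omega'' = {}^t\omega'' \omega'$, which is the upper-left block of the identity; (b) $\varphi = \nuII_i$, $\psi = \nuII_j$ both of the second kind, giving the analogous antisymmetric relation ${}^t\eta'\eta'' = {}^t\eta''\eta'$ (the lower-right block), using that the product of two second-kind differentials has zero residue at $\infty$ after the pairing; (c) the mixed pair $\varphi = \nuI_i$, $\psi = \nuII_j$, where the residue at $\infty$ is normalized — by the algebraic construction of the $\nuII_j$ in \cite{MP1,EEL}, the differentials of the second kind are chosen precisely so that the pairing matrix of residues against the $\nuI_i$ is the identity (up to the factor $2\pi\sqrt{-1}$), which yields ${}^t\omega'\eta'' - {}^t\omega''\eta' = \pi\sqrt{-1}\cdot \mathbf{1}_g$ (or its transpose).

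Collecting these block identities and writing them in matrix form is exactly the statement $M J\, {}^t M = 2\pi\sqrt{-1}\, J$: expanding the left side blockwise produces the four $g\times g$ blocks $2({}^t\omega''\,{}^t\!(2\omega') - {}^t\omega'\,{}^t\!(2\omega''))$-type expressions, and matching them against the blocks of $2\pi\sqrt{-1}J$ reproduces precisely (a), (b), (c). So the proof reduces to: state the Riemann bilinear relations, check the residue normalization of $\nuII_j$ at the unique point $\infty$, and assemble. One should also note that the chosen basis has its only base point / pole locus at $\infty$, so the residue sums in (b) and (c) involve only that one point, simplifying the bookkeeping.

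The main obstacle — really the only nontrivial point — is (c): verifying that the algebraically-defined second-kind differentials $\nuII_j$ have exactly the right principal parts at $\infty$ so that $\res_\infty(\nuI_i \, N_j) = \delta_{ij}$ for local primitives $N_j$ of $\nuII_j$. This is where the specific normalization from \cite{MP1} (following \cite{EEL}) is used, and it is the step that fixes the constant $2\pi\sqrt{-1}$ rather than an arbitrary invertible matrix on the right-hand side. Since this normalization is part of the cited construction of the $\nuII_j$, I would simply invoke it; everything else is the standard Riemann bilinear computation together with a blockwise matrix expansion, and I would not write out the index manipulations in detail.
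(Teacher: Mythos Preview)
The paper does not give its own proof of this proposition: it is stated with citations to \cite{B1, BLE, EEL} and no argument is supplied. Your proposal via the Riemann bilinear relations is the standard route taken in those references, and is correct in outline; so you have in fact supplied more than the paper does.

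Two small cautions. First, your transposition conventions slip: expanding $MJ\,{}^tM$ blockwise gives conditions of the form $\omega'\,{}^t\omega'' = \omega''\,{}^t\omega'$ and $\omega'\,{}^t\eta'' - \omega''\,{}^t\eta' = \tfrac{\pi\sqrt{-1}}{2}\,\mathbf{1}_g$, not the ${}^t\omega'\omega''$ forms you wrote; with the paper's convention $2\omega'_{ij}=\int_{\alpha_i}\nuI_j$ the row index is the cycle and the column index is the differential, so one must be careful which side the transpose lands on. Second, in the bilinear identity the residue term is $\res_p\big((\textstyle\int\varphi)\,\psi\big)$, not $\res_p(\varphi\int\psi)$; this matters for getting the sign and the constant right in step (c). Neither of these is a genuine gap --- once the indices are tracked correctly the argument goes through exactly as you describe, and the key normalization $\res_\infty\big((\int\nuI_i)\,\nuII_j\big)=\delta_{ij}$ is indeed built into the construction of the $\nuII_j$ in \cite{EEL, MP1}.
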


By the  Riemann bilinear relations \cite{F}, it is known that
 $\mbox{Im}\,({\omega'}^{-1}\omega'') $ is positive definite.
Referring to  Theorem 1.1 in \cite{F}, let
\begin{equation}
   \Rdelta:=\left[\begin{array}{cc}\Rdelta'\ \\
       \Rdelta''\end{array}\right]
\in \left(\frac{1}{2}\ZZ\right)^{2g}
   \label{eq2.9} 
\end{equation} 
be the theta characteristic which gives the Riemann-constant vector
$\omega_\mathrm{R}=2\omega^{\prime\prime}\Rdelta^{\prime}
+2\omega^\prime\Rdelta^{\prime\prime}$
 with respect to the base point $\infty$ and the period matrix 
$[\,2\omega'\ 2\omega'']$. 

 We define the $\sigma$ function as an entire function of 
$u={}^t\negthinspace (u_1, u_2, \ldots, u_g)\in \mathbb{C}^g$,
\begin{equation}
\begin{array}{rl}
   \sigma(u)&=\sigma(u;M)=\sigma(u_1, u_2, \ldots, u_g;M) \\
   &=c\,\mathrm{exp}(-\frac{1}{2}{}\ ^t\negthinspace  u\gamma u)
   \theta\negthinspace
   \left[\Rdelta\right](\frac{1}{2}{\omega'}^{-1} u;\ 
{\omega'}^{-1}\omega''), \\
\end{array}
   \label{def_sigma}
\end{equation}
where  
$\gamma := \omega^{\prime -1} \eta'$,
$c$ is a constant that depends on the moduli of the curve
and $\theta$ is  the Riemann $\theta$ function of $\delta 
   :=\left[\begin{array}{cc}\delta'\ \\
       \delta''\end{array}\right]\in \left(\frac12\ZZ\right)^{2g}$,
\begin{equation*}
   \theta\negthinspace
   \left[\delta\right](z ; \TT)
   =
   \sum_{n \in \ZZ^g} \exp\big\{- \sqrt{-1}\pi \big[
    \ ^t\negthinspace (n+\delta')\TT(n+\delta')
   + 2\ ^t\negthinspace (n+\delta')(z+\delta'')\big]\big\}.
   \label{def_theta}
\end{equation*}

For a given $u\in\CC^g$, we  introduce
$u'$ and $u''$ in $\RR^g$ so that $u=2\omega'u'+2\omega''u''$.
Further for $u$, $v\in\CC^g$, and $\ell$
($=2\omega'\ell'+2\omega''\ell''$) $\in\Pi$, 
we define
\begin{equation*}
\begin{array}{rl}
  L(u,v)    &:=2\ {}^t{u}(\eta'v'+\eta''v''),\nonumber \\
  \chi(\ell)&:=\exp[\pi\sqrt{-1}\big(2({}^t {\ell'}\Rdelta'-{}^t
  {\ell''}\Rdelta'') +{}^t {\ell'}\ell''\big)] \ (\in \{1,\,-1\}).
\end{array}
\end{equation*}
Then we have the following quasi-periodic properties of the $\sigma$ function
 \cite[Proposition 4.3]{MP2};
\begin{equation}
\sigma(u + \ell) = \sigma(u) \exp(L(u+\frac{1}{2}\ell, \ell)) \chi(\ell).
        \label{eq:4.11}
\end{equation}

This quasi-periodic property of $\sigma$ is a straightforward consequence
of the similar property holding for the normalized theta function in Chapter VI of \cite{L},
namely, for $z \in \CC^g$ and $\ell', \ell'' \in \ZZ^g$,
$$
\theta\left[\delta\right]\left(z + \ell' + \tau \ell''\right) 
=
\ee^{-2\pi\ii( \frac{1}{2} ^t\ell^{\prime\prime}\tau \ell''
+^t\ell'' z +^t \ell'\delta'' -^t \ell''\delta')} 
\theta\left[\delta\right]\left(z\right). 
$$
However, we wish to stress that one significant difference
between $\theta$ and $\sigma$ is that unlike $\theta$, $\sigma$ is also 
{\it modular invariant}: that is, for all  $u\in{\mathbf C}^g$, 
and $\gamma\in\mathrm{Sp}(2g,{\mathbf Z})$,
we have also:
$$
\sigma(u;\gamma M)=\sigma(u;M).
$$

The vanishing locus of $\sigma$ is:
\begin{equation}
        \Theta^{g-1} =( \WW^{g-1} \cup [-1] \WW^{g-1}) =
\WW^{g-1}.
\label{eq:Theta:g-1}
\end{equation}
The last equality is due 
to our choice of base point $\infty$ such that
$(2g-2)\infty=K_X$.
Since $\WW^{k}$ is not $[-1]$-invariant in general for $k<g-1$,
we define:
\begin{equation}
        \Theta^{k} := \WW^{k} \cup [-1] \WW^{k},
\label{eq:Theta:k}
\end{equation}
and similarly,
\begin{equation}
        \Theta^{k}_1 := w(\SSS^{k}_1 (X)) \cup [-1] w(\SSS_1^{k}(X)).
\label{eq:Theta:k1}
\end{equation}
For  hyperelliptic curves $(r=2,s=2g+1)$ with a branch point at 
$\infty$,  $\Theta^k$ 
equals  $\WW^k$ for every positive integer $k$,
but for general curves it does not.

The Schur function $\bold{s}_\Lambda (t)$ is given by
\begin{equation}
        \bold{s}_{\Lambda}(t) :=\frac{
          |t_j^{\Lambda_i + g - i}|_{1\le i, j \le g}}
          {|t_j^{i-1}|_{1\le i, j \le g}},
\label{eq:def_Schur}
\end{equation}
and the complete homogeneous symmetric function 
$h_n^{\LA\ell_1,\ell_2\RA}=h_n(t_{\ell_1},\ldots,t_{\ell_2})$
for positive integers $\ell_1$ and $\ell_2$
($\ell_1 <\ell_2$) is given by
\label{pg:h_n}
$$
\prod_{i=\ell_1}^{\ell_2}\frac{1}{(1-z t_i)}
=\sum_{n\ge 0} h_n^{\LA\ell_1,\ell_2\RA} z^n,\quad
h_n^{\LA\ell_1,\ell_2\RA} =0 \ \mbox{for } n < 0.
$$

\begin{proposition} \label{prop:SchurC} 
 \cite[Theorem 4.5.1]{Sa}
Using the complete homogeneous symmetric functions $h_n:=h_n^{\LA1,g\RA}$,
we can express $\bold{s}_\Lambda$ by a ($g\times g$)
\textrm{Jacobi-Trudi Determinant}, $|a_{ij}|_{1\le i,j\le g}$ 
with $a_{ij}=h_{\Lambda_i+j-i}$:
$$
        \bold{s}_{\Lambda}(t) :=|h_{\Lambda_i+j-i}|,\quad
        h_n=
\left|\begin{array}{ccccc}
   T_1 & -1 & 0 & \cdots & \\
   2T_2 & T_1 & -2 & \cdots & \\
   \vdots & \vdots &\vdots&\ddots&\vdots\\
   (n-1)T_{n-1} & (n-2)T_{n-2} & (n-3)T_{n-3} & \cdots & 1-n\\
   nT_{n} & (n-1)T_{n-1} & (n-2)T_{n-2} & \cdots & T_1\\
\end{array} \right|,
$$
where $h_0 =1$, $h_{i<0} =0$ and $T_k:=
\frac{1}{k}\sum_{j=1}^{g} t_j^k$.
\end{proposition}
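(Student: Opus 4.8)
The plan is to recover the two classical determinantal identities bundled in the statement --- the Jacobi--Trudi expression for $\bs_\Lambda$, and the determinantal form of $h_n$ in the power sums $T_k$ --- from the single generating identity $\prod_{k=1}^{g}(1-zt_k)^{-1}=\sum_{r\ge 0}h_r z^r$; since the material is entirely classical I would only indicate the steps. For the Jacobi--Trudi part I would fix an index $j$ and multiply both sides by $\prod_{k\ne j}(1-zt_k)=\sum_{p=0}^{g-1}(-1)^p e_p\big((t_k)_{k\ne j}\big)z^p$, where $e_p$ is the $p$-th elementary symmetric polynomial; the left side collapses to $(1-zt_j)^{-1}=\sum_{m\ge 0}t_j^{m}z^m$, so comparing coefficients of $z^m$ gives, for every $m\ge 0$, the scalar relation $\sum_{p=0}^{g-1}(-1)^p e_p\big((t_k)_{k\ne j}\big)\,h_{m-p}=t_j^{m}$. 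Reading this over $j=1,\dots,g$ and over an arbitrary strictly decreasing tuple $m_1>m_2>\cdots>m_g\ge 0$ turns it into the matrix identity $B\cdot(h_{m_i-p})=(t_j^{m_i})$ with $B=\big((-1)^p e_p((t_k)_{k\ne j})\big)_{j,p}$ a $g\times g$ matrix. Taking determinants and first specializing $m_i=g-i$: the $h$-block carries $h_0=1$ on its anti-diagonal and vanishes beyond it, so its determinant is $\pm1$ and $\det B=\pm\,|t_j^{g-i}|$; specializing instead $m_i=\Lambda_i+g-i$ and dividing then eliminates $\det B$ and gives $|h_{\Lambda_i+g-i-p}|=\pm\,|t_j^{\Lambda_i+g-i}|/|t_j^{i-1}|=\pm\,\bs_\Lambda(t)$. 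Relabelling the rows by $p\mapsto g-j$ and transposing brings this to $\bs_\Lambda(t)=|h_{\Lambda_i+j-i}|_{1\le i,j\le g}$, the overall sign being pinned to $+1$ by the single test case $\Lambda=0$, where both sides equal $1$.

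For the expression of $h_n$ through the $T_k$, I would take the logarithm of the generating identity: $\sum_{r\ge 0}h_r z^r=\exp\big(\sum_{k\ge 1}\frac1k\big(\sum_{j=1}^{g}t_j^{k}\big)z^k\big)=\exp\big(\sum_{k\ge 1}T_k z^k\big)$. Differentiating in $z$ and clearing the exponential gives $\sum_{r\ge 1}r\,h_r z^{r-1}=\big(\sum_{r\ge 0}h_r z^r\big)\big(\sum_{k\ge 1}k\,T_k z^{k-1}\big)$, whose coefficient of $z^{n-1}$ is the recursion $n\,h_n=\sum_{k=1}^{n}k\,T_k\,h_{n-k}$ with $h_0=1$. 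Treating $h_1,\dots,h_n$ as unknowns and $T_1,\dots,T_n$ as data, these $n$ equations have a lower-triangular coefficient matrix with diagonal $(-1,-2,\dots,-n)$; solving for $h_n$ by Cramer's rule and clearing the row factors reproduces the displayed lower-Hessenberg determinant in the $T_k$ --- with, as the case $n=2$ shows (there $h_2=\frac12(T_1^2+2T_2)$ against the $2\times2$ determinant $T_1^2+2T_2$), an overall factor $\frac1{n!}$.

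The hard part is bookkeeping rather than conceptual. In the first identity one must verify that the specialization $m_i=g-i$ really makes the $h$-block anti-triangular and track the signs of the two Vandermonde-type alternants $|t_j^{g-i}|$ and $|t_j^{i-1}|$ accurately enough that the division step is an identity of polynomials and not merely of rational functions; in the second, one must carry the factorials correctly through Cramer's rule. Neither step is substantial, and in the body of the paper we simply invoke \cite[Theorem 4.5.1]{Sa}.
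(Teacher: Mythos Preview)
The paper does not supply a proof of this proposition; it simply cites \cite[Theorem~4.5.1]{Sa} and moves on. Your sketch is the standard classical argument (essentially the proof one finds in Sagan or Macdonald), and it is correct. The Jacobi--Trudi part via the matrix identity $B\cdot(h_{m_i-p})=(t_j^{m_i})$ is exactly the textbook route; the Newton recursion $n h_n=\sum_{k=1}^n kT_k\,h_{n-k}$ followed by Cramer's rule is likewise the usual derivation of the Hessenberg determinant.

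You are also right to flag the missing $1/n!$: the displayed determinant in the statement is the classical one in the power sums $p_k=kT_k$, whose value is $n!\,h_n$, not $h_n$. Your $n=2$ check is accurate, and the discrepancy is a typo in the paper's transcription of the formula rather than a gap in your argument. Since this proposition is purely quoted background and is never used beyond motivating the Schur-function expansion of $\sigma$, the slip is harmless for the paper, but it is worth recording.
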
 

We use the multi-index convention
 and define a map
for $\beta:= (\beta_1, \ldots, \beta_g)$,
 \label{pg:multi-index2}
$$
\wg(\beta):= 
((2g - N(0) - 1)\beta_1,
(2g - N(1) - 1)\beta_2,
 \ldots,
(2g - N(g-2) - 1)\beta_{g-1},
\beta_g) \in \ZZ^g.
$$
By regarding   
$S_{\Lambda}(T) :=\bs_{\Lambda}(t)$
as a function of $T$,
the following Proposition was stated by
Bukhshtaber,   Leykin and Enolskii
\cite{BEL} and proved by Nakayashiki \cite[Theorem 3]{N1}.

\begin{proposition} \label{prop:Nakayashiki}
The expansion of $\sigma(u)$ at the origin takes the form
$$
   \sigma(u) = S_{\Lambda}(T)|_{T_{\Lambda_i + g - i} = u_i} 
            + \sum_{|\wg(\alpha)|>|\Lambda| } c_\alpha u^\alpha
$$
where $c_\alpha\in \QQ[\lambda_j]$ and $S_\Lambda (T)$ is the lowest-order
term in the w-degree of the $u_i$; 
$\sigma(u)$ is homogeneous of degree  $|\Lambda|$ 
 with respect to the $\lambda$-degrees. 
\end{proposition}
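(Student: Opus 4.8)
The plan is to extract the leading behaviour of $\sigma(u)$ near $u=0$ from the Riemann vanishing theorem together with the explicit knowledge of how $\sigma$ vanishes on $\WW^{g-1}$, and then to identify this leading term with the Schur function $S_\Lambda(T)$ by a weight (w-degree) count. First I would recall that, by (\ref{eq:Theta:g-1}), the zero divisor of $\sigma$ is $\WW^{g-1}$, and that along the stratification $\WW^0\subset\WW^1\subset\cdots\subset\WW^{g-1}$ the sigma function vanishes to higher and higher order as one approaches the most singular point $0=\kappa w(g\cdot\infty)$; concretely, using the parametrisation (\ref{eq:v_t}) of $\WW^k$ by local parameters $t_{\infty,1},\dots,t_{\infty,k}$ at $\infty$, the pullback of $u_i$ is $\frac{1}{2g-N(i-1)-1}(t_{\infty,1}^{2g-N(i-1)-1}+\cdots)$, so $u_i$ has w$^{-1}$-degree $\Lambda_i+g-i$ by the Lemma. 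This is exactly the substitution $T_{\Lambda_i+g-i}=u_i$ appearing in the statement, once we recall $T_k=\tfrac1k\sum_j t_{\infty,j}^k$.

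The core of the argument is then: (i) Assign to $u_i$ the weight $w_i:=2g-N(i-1)-1=\Lambda_i+g-i$, i.e. work with the $\wg$-grading; since the curve equation is $\lambda$-homogeneous and $\sigma$ is built from the curve data in a way compatible with this grading (this is the statement that $c_\alpha\in\QQ[\lambda_j]$ with $\sigma$ homogeneous of $\lambda$-degree $|\Lambda|$, which one proves from the modular-invariant algebraic construction of $\sigma$ via $M$ and the second-kind differentials), the Taylor expansion of $\sigma$ at $0$ is a sum of $\wg$-homogeneous pieces. (ii) Show the lowest such piece has $\wg$-degree exactly $|\Lambda|=\sum_i(\Lambda_i+g-i)$ and, as a polynomial in the $u_i$, is a nonzero constant multiple of the Schur polynomial $\bs_\Lambda$ re-expressed through $T_{w_i}=u_i$. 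For (ii) the key input is Nakayashiki's theorem \cite[Theorem 3]{N1}: the leading term of $\sigma$ must be a polynomial $P(u)$ in the $u_i$ that (a) is $\wg$-homogeneous of degree $|\Lambda|$, (b) vanishes on $\WW^{g-1}$ and on each deeper stratum $\WW^{k}$ to the order dictated by $\dim|D|$ (i.e. the order of vanishing of $P$ along $\WW^{g-1-n}$ is $n$, the Riemann singularity order), and (c) is normalised so the coefficient of the "staircase" monomial is $1$. These three conditions pin down $P$ uniquely, and the Schur polynomial $\bs_\Lambda$ — by its classical characterisation as the (up to scalar) unique polynomial solution of the KP/Plücker relations with the given leading monomial, equivalently via the Jacobi–Trudi form of Proposition \ref{prop:SchurC} — satisfies exactly (a),(b),(c) once one checks that the $\wg$-degree of the Schur monomial $T_1^{m_1}T_2^{m_2}\cdots$ corresponding to the partition $\Lambda$ equals $|\Lambda|$, and that the hook-length reading of $\mwdeg(u_i)=\Lambda_i+g-i$ matches the content/principal-specialisation grading of $\bs_\Lambda$.

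Concretely the steps in order: (1) establish $\lambda$-homogeneity of $\sigma$ of degree $|\Lambda|$ and the integrality $c_\alpha\in\QQ[\lambda_j]$ from the algebraic definition (\ref{def_sigma}) of $\sigma$ — using that $\omega',\eta'$, hence $\gamma$, and the theta characteristic $\Rdelta$ transform compatibly under the scaling action on the $\lambda_j$, and that the Schur function seeds the expansion; (2) identify the $\wg$-grading on $\CC[u_1,\dots,u_g]$ induced by $w_i=\Lambda_i+g-i$ and observe, via (\ref{eq:v_t}), that restriction to $\WW^k$ is a graded map; (3) use the Riemann vanishing theorem to get the order of vanishing of $\sigma$ along each $\WW^{g-1-n}$, and deduce the same vanishing for its leading term $P$; (4) invoke the uniqueness of a $\wg$-homogeneous polynomial of degree $|\Lambda|$ with those vanishing orders and prescribed leading monomial, and match it to $\bs_\Lambda$ rewritten as $S_\Lambda(T)$ under $T_{w_i}=u_i$; (5) record $|\Lambda|=\tfrac1{24}(r^2-1)(s^2-1)$ from the stated gap-sequence computation to make the degree statement explicit.

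The main obstacle I expect is step (3)–(4): translating the divisorial/Riemann-singularity vanishing data into a statement strong enough to isolate $\bs_\Lambda$ uniquely among $\wg$-homogeneous polynomials — one really needs that the vanishing orders along the whole flag $\WW^0\subset\cdots\subset\WW^{g-1}$, combined with $\wg$-homogeneity of the minimal degree $|\Lambda|$, leave no freedom, and this is precisely where Nakayashiki's proof of \cite[Theorem 3]{N1} does the work; absent that, one would have to reprove it by tracking the tau-function/Sato-Grassmannian description. Since the excerpt lets us cite that theorem, the proof reduces to the bookkeeping of steps (1),(2),(5) plus the degree/hook-length matching in (4).
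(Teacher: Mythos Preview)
Your proposal is aligned with the paper's treatment: the paper does not actually prove this Proposition at all but simply attributes it to Buchstaber--Leykin--Enolskii \cite{BEL} and cites Nakayashiki \cite[Theorem~3]{N1} for the proof, which is exactly the result you identify as the ``key input'' in your step~(ii)/(4). Your surrounding scaffolding (the $\wg$-grading bookkeeping, the restriction to $\WW^k$ via (\ref{eq:v_t}), the $\lambda$-homogeneity) is a reasonable unpacking of what Nakayashiki's argument contains, but it goes beyond what the paper itself supplies; since the paper is content to quote the result, your proposal --- which ultimately also defers the hard step to \cite[Theorem~3]{N1} --- is at least as complete.
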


Here we note that the factor $c$ in 
(\ref{def_sigma}) is fixed
so that there is no prefactor
in $S_{\Lambda}(T)|_{T_{\Lambda_i + g - i} = u_i}$. 
For the $(3,4)$ curve case, $\sigma$ is expanded 
around $\Theta^{2}$;
\begin{equation*}
        \sigma(u) = (u_1 - u_3 u_2^2 + \frac{1}{20} u_3^5) + 
        \mbox{higher weight terms}.
\label{eq:sigmaext}
\end{equation*}

\bigskip

\begin{remark}{\rm{
{\bf{Galois action on $\sigma$:}}
It should be noted that the action of the cyclic group $\mathbb{Z}_r$  on \(X\) 
induces its action on \(\JJ\), by the morphism \(\SSS^g(X)\to\JJ\), and 
on the coordinate space \(\mathbb{C}^g=\kappa^{-1}(\JJ)\) of
 the $\sigma$-function. 
Since the lattice \(\Pi\) is stable under this action of \(\mathbb{Z}_r\), 
we see that \(\sigma_{}(\hzeta_r u)\) also satisfies (\ref{eq:4.11}). 
We shall recall here that the space of the functions satisfying the relation (\ref{eq:4.11}) 
is one dimensional (Frobenius' theorem).  
Adding the leading terms of the expansion (\ref{prop:Nakayashiki}) 
and (\ref{eq:sigmaext}), we have arrived 
the formula  $\sigma_{}(\hzeta_r u)= \zeta_r^{a} \sigma_{}(u)$  
for some integer \(a\). 
Thus we will determine the exponent $a$. 
For $u_g^{(i)} \in \WW^1$ $(i = 1, 2, \ldots, g)$, 
and $t=(u_g^{(1)}, u_g^{(2)}, \ldots, u_g^{(g)})$,
$\sigma$ is expressed by
\begin{equation*}
   \sigma(u) = s_{\Lambda}(t) 
            + \sum_{|\wg(\alpha)|=|\Lambda|+1 } c_\alpha u^\alpha
            + \sum_{|\wg(\alpha)|=|\Lambda|+2 } c_\alpha u^\alpha
            + \cdots
\end{equation*}
and the action of $\hzeta$ on $\sigma(u)$ is given through the
terms with the same weight,
\begin{equation*}
   \sigma(\hzeta_r u) = \zeta_r^a s_{\Lambda}(t) 
            + \zeta_r^a \sum_{|\wg(\alpha)|=|\Lambda|+1 } c_\alpha u^\alpha
            + \zeta_r^a \sum_{|\wg(\alpha)|=|\Lambda|+2 } c_\alpha u^\alpha
            + \cdots.
\end{equation*}
On the other hand, since
{
$s_{\Lambda}(\hzeta_g u) =\zeta_r^{|\Lambda|(r_{g-1}+1)} s_{\Lambda}(u)$,
we have $a=|\Lambda| (r_{g-1}+1)|$:
\begin{equation}
\sigma(\hzeta_g u) =\zeta_r^{|\Lambda|(r_{g-1}+1)} \sigma(u).
\label{eq:sigam_zeta}
\end{equation}
}

}}
\end{remark}

\section{Vanishing of $\sigma$ function}

For the truncated Young diagram $\Lambda^{[k]}$ in
(\ref{eq:Lambdak})
we write the characteristics of the partition by
$(a_1, \ldots, a_{n_k}; b_1, \ldots, b_{n_k})$ and 
the cardinality of the set of pairs denoted by $n_k$
 \cite[\S 4.1, p. 51]{FH},
and define
$$
N_k := |\Lambda^{[k]}| = \sum_{i=1}^{n_k}(a_i+b_i+1).
$$
We note that $a_i$ and $b_i$ depend on $k$ of $\Lambda^{[k]}$.
In general, $a_i$ ($b_i$) of $\Lambda^{[k]}$  differs from 
$a_j$ ($b_j$) of $\Lambda^{[k]}$ if $i \neq j$.
Using them we have the quantities
$a_{n_k - i + 1} + b_{n_k - i + 1} + 1$, $i = 1, \ldots, n_k$.
Due to the arguments in \cite{MP2}, 
there exists an integer $j \in \{k+1, k+2, \ldots, g\}$ such
that
$a_i + b_i +1 =\Lambda_{j}+g-j$, for 
every $i = 1, \ldots, n_k$.
The correspondence is given by
$L^{[k]}(a_i, b_i):=j$.
For $i=n_k$ case, $a_{n_k} + b_{n_k} +1 =\Lambda_{k+1}+g-k-1$
and
$L^{[k]}(a_{n_k}, b_{n_k})=k+1$.

\begin{definition} \label{def:naturalk}
For $k = 1, 2, \ldots, g-1$,
and the characteristics of the partition of
$\Lambda^{[k]}$,  $(a_1, \ldots, a_{n_k}; b_1, \ldots, b_{n_k})$,
we define
$$
\natural_k :=\{
L^{[k]}(a_1,b_1), 
L^{[k]}(a_2,b_2), 
 \ldots,
L^{[k]}(a_{n_k}, b_{n_k})\},
$$ 
and 
$$
\natural_k^{(i)} := \left(\natural_k \setminus \{k + 1\}\right) \bigcup \{i\},
\quad \mbox{for} \quad i = 1, 2, \ldots, k.
$$
\end{definition}

From \cite[Corollary 5.4]{MP2}, we have
\begin{proposition} \label{prop:Fay}
For $u^{[k]} \in \Theta^k\setminus
\left(\Theta^k_1\cup \Theta^{k-1}\right)$,
$u^{[g]} \in \CC^g$,
$v\in \WW^{1}$, and $t \in \RR$ $(0<|t|<1)$, we have
\begin{enumerate}

\item
$\displaystyle{
        \left.\frac{\partial^{\ell}}
         {\partial {v_g}^{\ell}} \sigma(t v + u^{[k]})\right|_{v = 0} =
         0, \  \ell < N_k; \quad
        \left. \frac{\partial^{N_k}}
         {\partial {v_g}^{N_k}}
          \sigma(t v + u^{[k]})\right|_{v = 0} \neq 0,
}$ and

\item
$\displaystyle{
        \left.\frac{\partial^{\ell}}
         {\partial {u^{[g]}_g}^{\ell}} \sigma(u^{[g]})
         \right|_{u^{[g]} = u^{[k]}} = 0, \  \ell < N_k; \quad
        \left. \frac{\partial^{N_k}}
         {\partial {u^{[g]}_g}^{N_k}}
          \sigma(u^{[g]})\right|_{u^{[g]} = u^{[k]}} \neq 0.
}$

\end{enumerate}
\end{proposition}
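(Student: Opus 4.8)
\medskip\noindent\emph{Sketch of a proof.}
The two assertions are really one. Expanding in $v$ gives
$\left.\dfrac{\partial^{\ell}}{\partial v_g^{\ell}}\sigma(tv+u^{[k]})\right|_{v=0}
 = t^{\ell}\,\dfrac{\partial^{\ell}\sigma}{\partial u_g^{\ell}}(u^{[k]})$
with $t\neq0$, so (1) follows from (2). By the parity $\sigma(-u)=(-1)^{|\Lambda|}\sigma(u)$
(the case $\ell'=\ell''=0$ of (\ref{eq:4.11}), the sign coming from
Proposition~\ref{prop:Nakayashiki}), the two pieces $\WW^k$ and $[-1]\WW^k$ of $\Theta^k$
carry the same vanishing data, so it is enough to prove (2) for $u^{[k]}=w(D)$ with
$D=P_1+\cdots+P_k\in\SSS^k(X\backslash\infty)$, $\dim|D|=0$ — which is exactly what
$u^{[k]}\notin\Theta_1^k\cup\Theta^{k-1}$ says — and, since by
Proposition~\ref{prop:Nakayashiki} every Taylor coefficient of $\partial^{\ell}\sigma/\partial u_g^{\ell}$
is a polynomial in the $\lambda$'s, it is enough to do this for generic such $D$: the locus in $\Theta^k$
where such a derivative vanishes is closed and, being stable under the symmetries of the configuration, is
either empty or all of $\Theta^k$.

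I would prove (2) by downward induction on $k$, carrying the sharper claim: $\partial^{\ell}\sigma/\partial u_g^{\ell}$
vanishes on $\Theta^k$ for $\ell<N_k$, it vanishes on $\Theta_1^k\cup\Theta^{k-1}$ for $\ell=N_k$, and
$\partial^{N_k}\sigma/\partial u_g^{N_k}\not\equiv0$ on $\Theta^k$. The base case $k=g-1$ holds because
$N_{g-1}=\Lambda_g=1$, $\sigma$ vanishes on $\Theta=\WW^{g-1}$, $\partial\sigma/\partial u_g\not\equiv0$ on
$\Theta$ (which is reduced), and $\partial\sigma/\partial u_g$ vanishes on $\Theta_1^{g-1}\cup\Theta^{g-2}$
— on $\mathrm{Sing}\,\Theta$ since $\nabla\sigma$ vanishes there, and on $\WW^{g-2}$ since $w(D)+\WW^1\subset\Theta$
puts the $u_g$-direction into $T_{w(D)}\Theta$ for $D\in\SSS^{g-2}(X\backslash\infty)$. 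For the inductive step,
fix a generic $w(D)\in\WW^k$ and consider, for $P\in X$,
$$
  F(P):=\frac{\partial^{N_{k+1}}\sigma}{\partial u_g^{N_{k+1}}}\big(w(D+P)\big),
$$
which is a section of a line bundle on $X$ (well-defined because, by the inductive hypothesis, the lower
$u_g$-derivatives of $\sigma$ vanish on $\WW^{k+1}$, so the automorphy of $\sigma$ passes to this derivative
there). Since $w(D+P)\in\WW^{k+1}$, the inductive hypothesis gives $F\not\equiv0$; since for generic $D$ one
has $w(D+P)\in\WW^k$ only at $P=\infty$, while $\partial^{N_{k+1}}\sigma/\partial u_g^{N_{k+1}}$ vanishes on
$\Theta^k\subset\Theta_1^{k+1}\cup\Theta^k$, the section $F$ vanishes at $\infty$. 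Writing $P=Q(s)$ for the
local parameter $s$ at $\infty$, so that $w(Q(s))_i$ has order $s^{h_i}$ with
$h_i:=\mwdeg(u_i)=\Lambda_i+g-i$, and expanding
$F(Q(s))=\sum_{\alpha}\frac{1}{\alpha!}\,\partial^{\alpha}\!\big(\partial^{N_{k+1}}\sigma/\partial u_g^{N_{k+1}}\big)(w(D))\;w(Q(s))^{\alpha}$,
matching the coefficients of $1,s,\dots,s^{\Lambda_{k+1}}$ extracts
$\partial^{N_{k+1}}\sigma/\partial u_g^{N_{k+1}}(w(D))=\cdots=\partial^{N_k-1}\sigma/\partial u_g^{N_k-1}(w(D))=0$
and exhibits $\partial^{N_k}\sigma/\partial u_g^{N_k}(w(D))$, with nonzero coefficient, in the first surviving
term — provided $F$ vanishes at $\infty$ to order exactly $\Lambda_{k+1}=N_k-N_{k+1}$. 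The vanishing of
$\partial^{N_k}\sigma/\partial u_g^{N_k}$ on $\Theta_1^k\cup\Theta^{k-1}$ is then read off from the zero divisor
of $F$.

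The main obstacle is exactly that last point — that the zero of $F$ at $\infty$ has order $\Lambda_{k+1}$.
Here one combines Riemann--Kempf control of $\mathrm{mult}_{w(D)}\Theta$ with an analysis of how the high
derivative $\partial^{N_{k+1}}\sigma/\partial u_g^{N_{k+1}}$ — which is attuned to the osculating flag of the
canonical image of $X$ at $\infty$ — degenerates as $w(D+P)$ descends from a generic point of $\WW^{k+1}$
onto $\WW^k$ along the curve $w(D)+\WW^1$; through the expansion (\ref{eq:v_t}) and Newton's identities the
count reduces to the arithmetic of the Weierstrass semigroup of $X$ at $\infty$ and comes out to $\Lambda_{k+1}$.
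One must also check that the order is not raised as $w(D)$ crosses $\mathrm{Sing}\,\Theta$. For the cyclic
$(3,4)$ curve the $\mathbb{Z}_3$-action of Remark~\ref{rmk:sigma_zeta} restricts which powers of $s$ can appear,
giving an independent check of the count.

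An alternative, which would make the nonvanishing transparent, is to establish a ``shifted'' analogue of
Proposition~\ref{prop:Nakayashiki}: along a generic point of $\WW^k$ the lowest $u_g$-weight part of $\sigma$
is governed by the Schur polynomial $S_{\Lambda^{[k]}}$ of the truncated diagram
$\Lambda^{[k]}=(\Lambda_{k+1},\dots,\Lambda_g)$, whose restriction to the $u_g$-axis — all $T_m$ with $m\ge2$
set to zero — is $\big(f^{\Lambda^{[k]}}/N_k!\big)\,u_g^{N_k}$ with $f^{\Lambda^{[k]}}>0$ the number of standard
Young tableaux of shape $\Lambda^{[k]}$, the case $k=0$ being Proposition~\ref{prop:Nakayashiki} itself. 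Such
an expansion would follow from the quasi-periodicity (\ref{eq:4.11}) together with the Frobenius--Stickelberger
determinants of Proposition~\ref{prop:mul}, and would yield both the vanishing and its exactness at once.
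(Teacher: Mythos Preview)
The paper does not prove this proposition at all: it simply quotes it as \cite[Corollary 5.4]{MP2}. So there is no ``paper's proof'' here to compare against; the relevant argument lives in \cite{MP2}, and what matters is whether your sketch actually closes.

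There is a recurring error in your sketch, and it is not cosmetic. The identity
$\left.\partial_{v_g}^{\ell}\sigma(tv+u^{[k]})\right|_{v=0}=t^{\ell}\partial_{u_g}^{\ell}\sigma(u^{[k]})$
is false once $v$ is constrained to $\WW^1$, which is precisely the hypothesis of the statement (and the reason the auxiliary parameter $t$ is there at all). Along $\WW^1$ the coordinates $v_1,\dots,v_{g-1}$ are functions of $v_g$ of orders $h_i=\Lambda_i+g-i\ge 2$, so by the chain rule $\partial_{v_g}^{\ell}$ produces, already for small $\ell$, contributions from mixed partials $\partial^{\alpha}\sigma$ with $|\wg(\alpha)|\le\ell$; for instance on the $(3,4)$ curve $h_2=2$, so $\partial_{v_g}^{2}$ at $v=0$ involves $\partial_{u_2}\sigma$ as well as $\partial_{u_3}^{2}\sigma$. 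Thus (1) does not follow from (2) by your displayed formula. Exactly the same issue breaks your inductive step: the coefficient of $s^{m}$ in the Taylor expansion of $F(Q(s))$ is a linear combination of $\partial^{\alpha}\bigl(\partial_{u_g}^{N_{k+1}}\sigma\bigr)(w(D))$ over all $\alpha$ with $|\wg(\alpha)|\le m$, not just the single term $\partial_{u_g}^{N_{k+1}+m}\sigma(w(D))$, so ``matching the coefficients of $1,s,\dots,s^{\Lambda_{k+1}}$'' does \emph{not} extract the pure $u_g$--derivatives in isolation. To make the induction run you would need a stronger hypothesis --- vanishing of $\partial^{\alpha}\sigma$ on $\Theta^{k}$ for all $\alpha$ of weighted degree $<N_k$, not merely $\alpha=\ell e_g$ --- and that is essentially the content of the Riemann--Kempf analysis you defer to a remark. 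Finally, the step you yourself flag as the ``main obstacle'', that $F$ vanishes at $\infty$ to order exactly $\Lambda_{k+1}$, is the entire substance of the proposition; the paragraph you devote to it is a description of where the difficulty lies rather than an argument, and the same goes for the ``shifted Nakayashiki'' alternative. As written, the sketch does not prove either (1) or (2).
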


Let $\mathcal{I}$ be the family of all finite 
sequences made up of unordered sets of numbers between 1 and $g$.
For an element $I_k \in \mathcal{I}$ and $u \in \mathbb{C}^g$, define\,{\rm{:}}
$$
        \sigma_{I_k}:=\left(
       \prod_{i \in I_k}
         \frac{\partial}{\partial u_i} \right) \sigma,
$$
$$
        \mwdeg(I_k) := \sum_{i \in I_k} \mwdeg(u_i).
$$

From \cite[Theorem 5.15]{MP2}, we have
\begin{proposition} \label{vanishingTh}
Let $\cI_g= \{ \emptyset \}$.
For each $k=1,2,\ldots,g$,
there exists a family of $\cI_k$ sequences consisting of 
$1, 2, \ldots, g$
whose element $I_{k}$ is such that
$\# I_{k} = n_k$, $\mwdeg(I_{k})\ge N_k$,
and as a function over 
$\kappa^{-1}(\Theta^k \setminus (\Theta^k_1 \cup \Theta^{k-1}))$,
\begin{equation}
        \sigma_{J_{k}}=\left\{
        \begin{array}{ll}
           \neq 0 & \mbox{ for } J_{k} =I_k\\
           = 0 & \mbox{ for } J_{k}\varsubsetneqq I_k. \\
        \end{array} \right.
\label{eq:Avn-nvn}
\end{equation}
Moreover, 
$\{ \natural_k, \natural_k^{(k)},
 \natural_k^{(k-1)}, \ldots, \natural_k^{(2)}, \natural_k^{(1)}\}\subset
\cI_k$ and $\mwdeg(\natural_{k})= N_k$.
\end{proposition}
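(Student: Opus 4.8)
Since the existence of the families $\cI_k$, together with $\#I_k=n_k$, $\mwdeg(I_k)\ge N_k$ and the non-vanishing/vanishing dichotomy~(\ref{eq:Avn-nvn}) on $\kappa^{-1}(\Theta^k\setminus(\Theta^k_1\cup\Theta^{k-1}))$, is exactly \cite[Theorem~5.15]{MP2}, the plan is to take that result as given and then to establish the remaining ``Moreover'' clause: that $\natural_k$ and each $\natural_k^{(i)}$ of Definition~\ref{def:naturalk} belong to $\cI_k$ and that $\mwdeg(\natural_k)=N_k$.

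I would first dispose of the weight identity, which is purely combinatorial. Using $\mwdeg(u_j)=\Lambda_j+g-j$ and the defining property of $L^{[k]}$, namely $a_i+b_i+1=\Lambda_{L^{[k]}(a_i,b_i)}+g-L^{[k]}(a_i,b_i)=\mwdeg(u_{L^{[k]}(a_i,b_i)})$ for $i=1,\ldots,n_k$, summation over $i$ gives
$$
\mwdeg(\natural_k)=\sum_{i=1}^{n_k}\mwdeg(u_{L^{[k]}(a_i,b_i)})=\sum_{i=1}^{n_k}(a_i+b_i+1)=N_k .
$$
Since $\mwdeg(u_j)-\mwdeg(u_{j+1})=\Lambda_j-\Lambda_{j+1}+1\ge 1$, the weights $\mwdeg(u_j)$ strictly decrease in $j$; as $L^{[k]}(a_{n_k},b_{n_k})=k+1$ while the substituted index in $\natural_k^{(i)}$ satisfies $i\le k<k+1$, this gives $\mwdeg(\natural_k^{(i)})>\mwdeg(\natural_k)=N_k$. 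Thus each $\natural_k^{(i)}$ has cardinality $n_k$ and weight $\ge N_k$, which are the eligibility conditions for membership in $\cI_k$.

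It remains to show that $\natural_k$, and hence the $\natural_k^{(i)}$, actually occur in the family $\cI_k$; I would do this by matching Definition~\ref{def:naturalk} against the inductive construction behind \cite[Theorem~5.15]{MP2}. That construction descends from $k=g$, where $\cI_g=\{\emptyset\}$ and $N_g=0$: one has $\Theta^{k-1}\subset\Theta^k$, passing from $\Lambda^{[k]}$ to $\Lambda^{[k-1]}$ prepends the row $\Lambda_k$ (so the total weight jumps by $N_{k-1}-N_k=\Lambda_k$), and, according to whether the Durfee square grows or not, the derivative index set is altered by one elementary move --- appending a new index, or replacing an index by one of larger weight --- the move at each level being pinned down by Proposition~\ref{prop:Fay}, which controls precisely the order to which $\sigma$ vanishes in the relevant $u_g$-direction. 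Tracking this recursion, the index set obtained on reaching level $k$ is, via the bijection $L^{[k]}$ between the diagonal hook lengths $a_i+b_i+1$ of $\Lambda^{[k]}$ and the weights $\mwdeg(u_j)=\Lambda_j+g-j$, precisely $\{L^{[k]}(a_i,b_i):i=1,\ldots,n_k\}=\natural_k$. Once $\natural_k\in\cI_k$ is in hand, each variant $\natural_k^{(i)}$ --- obtained by exchanging the largest index $k+1$ of $\natural_k$ for some $i\le k$ --- also lies in $\cI_k$, since the sole role of that largest index in the leading behaviour of $\sigma$ along $\Theta^k$ is to carry the one ``free'' first-order derivative transverse to $\Theta^{k-1}$, a derivative that may equally be taken along any $u_i$, $i\le k$, without disturbing the vanishing on $\Theta^{k-1}$; this freedom is part of what \cite[Theorem~5.15]{MP2} records.

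The delicate point --- and the main obstacle --- is the faithful dictionary between the two combinatorial descriptions, the Frobenius characteristics $(a_i;b_i)$ of the truncated diagrams $\Lambda^{[k]}$ used here and the step-by-step accumulation of derivative indices in \cite{MP2}, and, alongside it, the verification that no accidental cancellation kills the non-vanishing of $\sigma_{\natural_k}$. The latter ultimately rests on Proposition~\ref{prop:Fay} for the $u_g$-direction and on the Schur-function leading term supplied by Proposition~\ref{prop:Nakayashiki}, whose shape dictates exactly which first-order partials survive on $\Theta^k\setminus(\Theta^k_1\cup\Theta^{k-1})$.
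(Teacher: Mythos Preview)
The paper gives no proof of this proposition at all: it is introduced by the phrase ``From \cite[Theorem~5.15]{MP2}, we have'' and the entire statement, including the ``Moreover'' clause, is quoted as a result of \cite{MP2}. Your approach therefore agrees with the paper's (cite \cite{MP2}) but goes further by supplying supplementary detail the paper omits. Your weight computation $\mwdeg(\natural_k)=\sum_i(a_i+b_i+1)=N_k$ is correct and self-contained from the definitions; your sketch that $\natural_k$ and the $\natural_k^{(i)}$ lie in $\cI_k$ is a reasonable outline of how the construction in \cite{MP2} runs, though as you acknowledge it is not a standalone argument and ultimately still rests on the work in \cite{MP2}. For the purposes of this paper, the citation suffices.
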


\begin{remark} 
{\rm{
Here we have the convention that for $v \in \WW^k \subset \JJ$,
and $u \in \CC^g$,
$\sigma_{I_k}(v)$ means that it is given by 
$\displaystyle{\left(
       \prod_{i \in I_k}
         \frac{\partial}{\partial u_i} \right) \sigma(u) \Bigr|_{u = v}}$.
}}
\end{remark} 

For the case of an $(r,s)$ curve, 
the  action $\hzeta_r$ on $u$ via
its Abel preimage $(x_i, y_i) \to (x_i, \zeta_r y_i)$,
($\zeta_r^r = 1$, $\zeta_r\neq 1$), is given by:
{
\begin{equation*}
\sigma_{\natural_k}(\hzeta_r u)
= \zeta_r^{|\Lambda^{(k)}| (r_{g-1}+1)} \sigma_{\natural_k}(u).
\label{eq:zetak}
\end{equation*}
}

 A non-vanishing theorem for $\sigma$ over
a hyperelliptic $(2, 2g+1)$ curve $X$ of genus $g$ is
given in \cite{O1},
$$
 \natural_k:=\left\{
\begin{array}{cc} 
\{g, g - 2, \ldots, k + 3, k + 1\} & \mbox{if } g-k \mbox{ is odd}, \\
\{g - 1, g - 3, \ldots, k +3 , k +1 \} & \mbox{ otherwise}.
\end{array} \right.
$$ 
Similar results for $(3,4)$ and $(3,5)$ curves are given in \cite{O2,O3}.

We show some examples of $\natural_k$ in Table 1.1.
{\tiny{
\begin{equation*}
\centerline{
\vbox{
        \baselineskip =10pt
        \tabskip = 1em
        \halign{&\hfil#\hfil \cr
        \multispan7 \hfil Table 1.1 \hfil \cr
        \noalign{\smallskip}
        \noalign{\hrule height0.8pt}
        \noalign{\smallskip}
$(r,s)$ & \strut\vrule& $g$ & \strut\vrule & $\natural_1$ &$\natural_2$
 & $\natural_3$ &$ \natural_4$
& $\natural_5$ & $\natural_6$ & $\natural_7$ \cr
\noalign{\smallskip}
\noalign{\hrule height0.3pt}
\noalign{\smallskip}
$(2,3)$&\strut\vrule& $1$ & \strut\vrule \cr
$(2,5)$&\strut\vrule& $2$ & \strut\vrule & $\{2\}$\cr
$(2,7)$&\strut\vrule& $3$ & \strut\vrule & $\{2\}$&$\{3\}$\cr
$(2,9)$&\strut\vrule& $4$ & \strut\vrule & $\{2,4\}$&$\{3\}$&$\{4\}$ \cr
$(2,11)$&\strut\vrule& $5$ & \strut\vrule 
                    & $\{2,4\}$&$\{3,5\}$&$\{4\}$& $\{5\}$\cr
$(2,13)$&\strut\vrule& $6$ & \strut\vrule 
                    & $\{2,4,6\}$&$\{3,5\}$&$\{4,6\}$& $\{5\}$&$\{6\}$ \cr
$(2,15)$&\strut\vrule& $7$ &\strut\vrule 
                    & $\{2,4,6\}$&$\{3,5,7\}$&$\{4,6\}$& $\{5,7\}$&
                                           $\{6\}$&$\{7\}$\cr
$(2,17)$&\strut\vrule& $8$ & \strut\vrule 
                    & $\{2,4,6,8\}$&$\{3,5,7\}$&$\{4,6,8\}$& $\{5,7\}$&
                                 $\{6,8\}$&$\{7\}$& $\{8\}$ \cr
$(3,4)$&\strut\vrule& $3$ & \strut\vrule 
                    & $\{2\}$&$\{3\}$& & & & &  \cr
$(3,5)$&\strut\vrule& $4$ & \strut\vrule 
                    & $\{2\}$&$\{3\}$& $\{4\}$ & & & &  \cr
\noalign{\smallskip}
\noalign{\hrule height0.3pt}
\noalign{\smallskip}
\noalign{\hrule height0.8pt}
}
}
}
\end{equation*}
}}

In \cite{O2}, the partial derivative over multi-index $\natural_n$
was given by
$
        \sigma_{\natural_1}(u)=\frac{\partial^2}{\partial u_3^2} \sigma(u)
=\sigma_{33}(u)
$
but here we have instead defined
$
        \sigma_{\natural_1}(u)=\frac{\partial}{\partial u_2} \sigma(u)
=\sigma_{2}(u)$ because
$\sigma_{2}(u) =-\sigma_{33}(u)$.

Using them, Theorem 5.24 in \cite{MP2} gives the Jacobi inversion formulae
for a stratum in the Jacobian $\JJ$: 
\begin{theorem}\label{algebraic}
For $k < g$,
$(P_1, \ldots, P_k) \in \SSS^k(X\backslash \infty ) \setminus (\SSS^k_1(X)
\cap\SSS^k(X\backslash \infty ))$ and
$u = \pm w(P_1, \ldots, P_k)\in \kappa^{-1}(\Theta^k)$,
$$
\frac{ \sigma_{\natural_k^{(i)}}(u) }{\sigma_{\natural_k}(u)}
      = (-1)^{k-i+1} \mu_{k, i-1} (P_1, \ldots, P_k) .
$$
\end{theorem}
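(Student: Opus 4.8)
The plan is to follow the classical route to Jacobi inversion: for fixed $(P_1,\ldots,P_k)$, exhibit a meromorphic function of a moving point $P\in X\backslash\infty$, built from $\sigma$ and its derivatives, that has the same divisor on $X$ as the Mumford-type function $\mu_k(P)=\mu_k(P;P_1,\ldots,P_k)$ of Proposition~\ref{prop:mul}, and then match the two expansions in the distinguished basis $\phi_0,\ldots,\phi_k$ of $\{f\in R_g\ :\ \wdeg f\le N(k)\}$. Put $u=w(P_1,\ldots,P_k)$, taken away from $\SSS^k_1(X)$ so that $\sigma_{\natural_k}(u)\ne0$ by Proposition~\ref{vanishingTh}. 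By Lemma~\ref{prop:2theta2}, $\mu_k(\cdot)$ has divisor $\sum_{j=1}^kP_j+\sum_{j=1}^{N(k)-k}Q_j-N(k)\infty$, where $(Q_j)=\alpha_k(P_1,\ldots,P_k)$ and $w\bigl(\sum_jQ_j\bigr)\equiv-u\pmod\Pi$; this is the divisor we must reproduce.

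On the $\sigma$ side, consider $\Sigma(P):=\sigma_{\natural_k}\bigl(w(P)-u\bigr)$, lifted to $\tX$, and its normalization $\widehat{\Sigma}(P)$ obtained by multiplying $\Sigma(P)$ by the exponential factor prescribed by the quasi-periodicity (\ref{eq:4.11}) and by a fixed product of ``single-point'' $\sigma$-derivative factors in $P$, chosen so that $\widehat{\Sigma}$ descends to a meromorphic function on $X$. The zeros are located from the stratified vanishing results: if $P=P_m$ then $w(P_m)-u=-w\bigl(\sum_{j\ne m}P_j\bigr)\in[-1]\WW^{k-1}\subset\Theta^{k-1}$, and $\sigma_{\natural_k}$ vanishes there (it is non-vanishing only off $\Theta^k_1\cup\Theta^{k-1}$), with a simple zero by the transversality built into Propositions~\ref{prop:Fay} and \ref{vanishingTh}; a parallel computation handles $P=Q_m$; the pole at $\infty$ is read from the expansion (\ref{eq:dnu_tinf}) of the $\nuI_i$ together with the degree formula $\mwdeg(u_i)=\Lambda_i+g-i$ of Section~\ref{curves}, and totals $N(k)$. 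Hence $\mu_k(P)=c\,\widehat{\Sigma}(P)$ with $c$ independent of $P$.

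It remains to identify $c$ and to read off the coefficients. Writing $\mu_k(P)=\phi_k(P)+\sum_{j=0}^{k-1}(-1)^{k-j}\mu_{k,j}(P_1,\ldots,P_k)\,\phi_j(P)$ and Taylor-expanding $\sigma_{\natural_k}(w(P)-u)$ about $u$ along the components of $w(P)=\int_\infty^P\nuI$, whose $\mwdeg$'s are exactly the hook lengths $\Lambda_i+g-i$, the term proportional to $\phi_{i-1}(P)$ carries the factor $\sigma_{\natural_k^{(i)}}(u)$: differentiating $w(P)$ in the $\phi_{i-1}$-direction converts the $u_{k+1}$-derivative contained in $\natural_k$ (recall $k+1=L^{[k]}(a_{n_k},b_{n_k})\in\natural_k$) into a $u_i$-derivative, which is precisely the substitution $\natural_k\mapsto\natural_k^{(i)}$ of Definition~\ref{def:naturalk}. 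Comparing the leading ($\phi_k$) coefficients fixes $c=1/\sigma_{\natural_k}(u)$ --- here one uses Proposition~\ref{prop:Nakayashiki} to ensure that no lower-weight correction $c_\alpha u^\alpha$ of $\sigma$ contaminates the leading term --- and comparing the $\phi_{i-1}$ coefficients yields $\sigma_{\natural_k^{(i)}}(u)/\sigma_{\natural_k}(u)=(-1)^{k-i+1}\mu_{k,i-1}(P_1,\ldots,P_k)$, the sign being the product of the $(-1)^{k-j}$ in the definition of $\mu_{k,j}$ with the sign generated when the $w(P)$-expansion is reorganized into a multi-index derivative. A cleaner way to organize the divisor computation and the coefficient comparison is downward induction on $k$, descending from $\Theta^{k+1}$ to $\Theta^k$ via $\partial/\partial u_g$ and Proposition~\ref{prop:Fay}, the base case $k=g-1$ (where $\natural_{g-1}=\{g\}$ and $\natural_{g-1}^{(i)}=\{i\}$) being the classical Jacobi/Klein inversion on the theta divisor; alternatively, granting the generalized Frobenius--Stickelberger relations expressing the FS determinants $\psi_n^{(\check\ell)}$ as products of $\sigma$-derivatives, the theorem follows at once by Cramer's rule applied to the linear system defining $\mu_k$.

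I expect the main obstacle to be the second step: proving that $\widehat{\Sigma}$ vanishes, simply, at exactly the $k+(N(k)-k)$ prescribed points and has exactly the predicted pole at $\infty$, with no spurious zeros. This needs the full strength of Proposition~\ref{vanishingTh}, invoked along the whole filtration $\Theta^{g-1}\supset\cdots\supset\Theta^{k}$, together with the combinatorial fact that the multi-index produced from the truncated Young diagram $\Lambda^{[k]}$ via the characteristics $L^{[k]}(a_i,b_i)$ is precisely $\natural_k$ and that $\mwdeg(\natural_k)=N_k$. Pinning down the precise normalizing factors that make $\widehat{\Sigma}$ single-valued on $X$, controlling the quasi-periodicity sign $\chi(\ell)$, and checking that the residual multiplicative constant is exactly $\pm1$ with the stated sign $(-1)^{k-i+1}$ are secondary, purely bookkeeping, nuisances.
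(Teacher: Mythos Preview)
The paper does not supply its own proof of this theorem: it is imported verbatim as ``Theorem 5.24 in \cite{MP2}'' (see the sentence immediately preceding the statement). So there is no in-paper argument to compare your proposal against; any substantive comparison would have to be with \cite{MP1,MP2}.

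That said, your outline is broadly the strategy carried out in \cite{MP1,MP2}: build from $\sigma_{\natural_k}(w(P)-u)$ a meromorphic function of $P\in X$, match its divisor with that of $\mu_k(P;P_1,\ldots,P_k)$ from Proposition~\ref{prop:mul}, and read off the $\phi_{i-1}$-coefficients. Two points deserve care. First, Proposition~\ref{vanishingTh} as stated only gives \emph{non}-vanishing of $\sigma_{\natural_k}$ off $\Theta^k_1\cup\Theta^{k-1}$; it does not assert vanishing on $\Theta^{k-1}$, which you need at $P=P_m$. In \cite{MP2} this zero (and its simplicity) comes from the Riemann-singularity-type input packaged here as Proposition~\ref{prop:Fay}, applied along the descent $\Theta^k\supset\Theta^{k-1}$ --- so your ``downward induction on $k$ via $\partial/\partial u_g$'' is the right mechanism, not merely a cleaner alternative. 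Second, the ``single-point $\sigma$-derivative factors'' you allude to for normalizing $\widehat\Sigma$ must be made explicit; in \cite{MP1,MP2} these are specific powers of $\sigma_{\natural_1}(w(P))$, and the quasi-periodicity/sign bookkeeping is done there in full. Your proposed shortcut via the Frobenius--Stickelberger relations and Cramer's rule is not circular in this paper (those relations are quoted from \cite{O1,O2}, not derived here), but note that in the general $(r,s)$ setting the FS relations available in the literature are themselves established using the Jacobi inversion machinery, so one should check the logical order in whichever source one appeals to.
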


Since $\mu_{1,0}(P)=x$ for a certain point $P=(x,y)$, we have 
$$\sigma_{\natural_1^{(1)}}(u)/\sigma_{\natural_1}(u)=-x\qquad \mbox{for}\quad u =w(P).$$ 
For the case of a cyclic $(3,4)$ curve, we have $du_3 = dx/3y$
and thus, $d (\sigma_{\natural_1^{(1)}}(u)/\sigma_{\natural_1}(u))/d u_3 = -3y$
as mentioned in Introduction.

\section{Frobenius-Stickelberger relation}\label{CoordinateChange}
The prototype of such a formula was first found, for the elliptic case, 
in \cite{FS}. Generalisations of that result to general hyperelliptic curves,
and to examples of cyclic trigonal curves, have been given in 
\cite[Theorem 7.2]{O1}, \cite{O2} and \cite{O3}.
Here we recall these results, which will be needed below.

In the general hyperelliptic case we have:
\begin{proposition}\label{prop:FSH}
For the hyperelliptic $(2, 2g+1)$ curve and
a positive integer $n>1$,
let $(x_1,y_1), \cdots, (x_n, y_n)$  in  $X$, 
and $u^{(1)}, \cdots, u^{(n)}$  in  $\kappa^{-1}(\WW^1)$  be points such that  
$\kappa(u^{(i)})= \kappa w((x_i, y_i))$.
Then the following relation holds\,{\rm :}
\begin{equation}
\frac{\sigma_{\natural_n}(\sum_{i=1}^{n} u^{(i)})
\prod_{i<j}\sigma_{\natural_2}(u^{(i)} - u^{(j)}) }
{\prod_{i=1}^n\sigma_{\natural_1}(u^{(i)})^n}
=\epsilon_n \psi_n((x_1, y_1), \cdots, (x_n, y_n)),
\label{eq:FS0}
\end{equation}
where $\epsilon_n=(-1)^{g+n(n+1)/2}$ for $n\leq g$
and $\epsilon_n=(-1)^{(2n-g)(g-1)/2}$ for $n\geq g+1$.
\end{proposition}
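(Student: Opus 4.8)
The plan is to prove the Frobenius--Stickelberger relation~(\ref{eq:FS0}) by combining three ingredients already available: the vanishing/non-vanishing behaviour of $\sigma$ and its $\natural$-derivatives on the strata $\Theta^k$ (Propositions~\ref{prop:Fay} and~\ref{vanishingTh}), the Jacobi inversion formula on a stratum (Theorem~\ref{algebraic}), and the quasi-periodicity~(\ref{eq:4.11}). First I would fix the points $(x_i,y_i)$ and regard both sides of~(\ref{eq:FS0}) as functions of a single point, say $(x_1,y_1)$, with the others held generic; more precisely I would move the $u^{(1)}$ variable. The denominator $\prod_i \sigma_{\natural_1}(u^{(i)})^n$ is a fixed nonzero normalising factor (nonzero by Proposition~\ref{vanishingTh} since each $u^{(i)}\in\kappa^{-1}(\WW^1)=\kappa^{-1}(\Theta^1)$ and $\mwdeg(\natural_1)=N_1$), so the real content is to match the numerator against $\psi_n$ up to a sign.

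The key step is a divisor/zero count. On the left, $\sigma_{\natural_n}(\sum_i u^{(i)})$ as a function of $u^{(1)}$ (equivalently of the point $(x_1,y_1)$ moving on $X$) vanishes exactly when $\sum_i u^{(i)}$ lands on the appropriate sub-stratum, i.e.\ when the divisor $\sum (x_i,y_i)$ becomes special; by Lemma~\ref{prop:2theta2} and Proposition~\ref{prop:addition} this is governed by the $\mu_n$ construction, and $\psi_{n}((x_1,y_1),\dots,(x_n,y_n))$ (via Proposition~\ref{prop:mul}, $\mu_n(P)=\psi_{n+1}/\psi_n$) has zeros at precisely the same configurations. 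The factors $\sigma_{\natural_2}(u^{(1)}-u^{(j)})$ supply the zeros coming from collisions $(x_1,y_1)\to(x_j,y_j)$ (again because $u^{(1)}-u^{(j)}\in\Theta^1$ forces $\sigma_{\natural_2}$ to vanish, by Proposition~\ref{prop:Fay} with $k=1$), and the denominator kills the poles at $\infty$. So the ratio of the two sides is a holomorphic function on $X^{n}$ (in each $(x_i,y_i)$ separately) with no zeros and no poles; by compactness it is a constant, independent of the points. I would then pin down this constant, and in particular show it is $\pm1$, by a weight/degree computation: using Lemma~2.9 and the $\mwdeg$ bookkeeping for $v_i$ in~(\ref{eq:v_t}), together with Proposition~\ref{prop:Nakayashiki} (the leading Schur-function term of $\sigma$), both sides are quasi-homogeneous of the same $\mathrm{w}$- and $\lambda$-degree, which forces the constant to be a pure scalar; evaluating the leading coefficients (the Schur/Vandermonde part on the $\sigma$-side versus the Vandermonde-type leading term of $\psi_n$) then gives $\epsilon_n$.

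The sign $\epsilon_n$ itself is the delicate part, and I expect it to be the main obstacle. Its two regimes, $\epsilon_n=(-1)^{g+n(n+1)/2}$ for $n\le g$ and $\epsilon_n=(-1)^{(2n-g)(g-1)/2}$ for $n\ge g+1$, reflect the fact that for $n>g$ the divisor $\sum_{i=1}^n(x_i,y_i)$ is automatically special, so one must pass through the $[-1]$-involution and Proposition~\ref{prop:addition} (writing $u=\pm w$, with the sign of the Abel sum and the Riemann-constant shift both entering), and track how the reordering of rows/columns in $\psi_n$ versus the ordering built into $\natural_n$ contributes factors of $(-1)$. Concretely I would handle $n\le g$ first by an induction on $n$ — the base case $n=2$ being a direct consequence of Theorem~\ref{algebraic} ($\sigma_{\natural_2}(u^{(1)}-u^{(2)})/\sigma_{\natural_1}(u^{(1)})\sigma_{\natural_1}(u^{(2)})$ recovers $x_1-x_2=\psi_2$ up to sign) and an addition step using the quasi-periodicity and the vanishing data — and then treat $n=g+1$ and beyond by applying the $[-1]$ relation of Proposition~\ref{prop:addition} to reduce back to the $n\le g$ case, carefully transporting the sign through the map $\alpha_n$. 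The bookkeeping of these accumulated signs, rather than any analytic difficulty, is where the proof's real work lies.
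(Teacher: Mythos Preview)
The paper does not give its own proof of this proposition: Section~6 opens with ``Here we recall these results, which will be needed below'' and simply quotes the identity from \cite[Theorem~7.2]{O1}. So there is nothing in the paper to compare your argument against line by line.

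That said, your sketch is essentially the method used in \cite{O1}: fix all but one point, compare zeros and poles of the two sides as functions of that point on $X$, use periodicity to conclude the ratio is a constant, and then identify the constant from the leading Schur/Vandermonde terms. One point you pass over too quickly is the periodicity step. Before you can say ``the ratio is a holomorphic function on $X^n$ with no zeros or poles'', you must check that as $u^{(1)}\mapsto u^{(1)}+\ell$ with $\ell\in\Pi$, the automorphy factors coming from (\ref{eq:4.11}) for $\sigma_{\natural_n}(\sum u^{(i)})$, for each $\sigma_{\natural_2}(u^{(1)}-u^{(j)})$, and for $\sigma_{\natural_1}(u^{(1)})^n$ cancel exactly. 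This is not automatic from the vanishing theorems and is precisely the content of \cite[Proposition~7.5]{O1} (the hyperelliptic quasi-periodicity of $\sigma_{\natural_k}$, quoted in the present paper as Lemma~7.11); without it the ``compactness $\Rightarrow$ constant'' step does not go through. A second minor point: for $n\ge g$ one has $\natural_n=\emptyset$ and $\sigma_{\natural_n}=\sigma$, so the zero set is the full theta divisor rather than a sub-stratum, and your description of the zeros via ``the divisor becomes special'' should be phrased accordingly; this is also where the two sign formulas for $\epsilon_n$ bifurcate, as you correctly anticipate.
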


In the case of a trigonal curve, from \cite[Theorem 5.3, Lemma 4.3]{O2}, we have the following Proposition
and Corollary:
\begin{proposition}\label{prop:FSRTC}{\rm{\cite[Theorem 5.3]{O2}}}
For trigonal  $(3, s)$ curve $(s=4,5)$, and
a positive integer $n>1$,
let $(x_1,y_1), \cdots, (x_n, y_n)$  in  $X$  
and  $u^{(1)}$, $\cdots$, $u^{(n)}$  in  $\kappa^{-1}(\WW^1)$ 
be points such that  $\kappa(u^{(i)})= \kappa w((x_i, y_i))$.
Then the following relation holds\,{\rm :}
\begin{equation}
\begin{array}{rl}
&\displaystyle{\frac{\sigma_{\natural_n}(\sum_{i=1}^{n} u^{(i)})
\prod_{i<j}\sigma_{\natural_2}(u^{(i)} +\hzeta_3 u^{(j)}) 
\sigma_{\natural_2}(u^{(i)} +\hzeta_3^2 u^{(j)}) }
{\prod_{i=1}^n\sigma_{\natural_1}(u^{(i)})^{2n-1}}}\\
&=\psi_n((x_1, y_1), \cdots, (x_n, y_n)) 
\varphi_n((x_1, y_1), \cdots, (x_n, y_n)).
\label{eq:FS3}
\end{array}
\end{equation}
\end{proposition}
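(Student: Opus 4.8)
The plan is to follow the same strategy used to establish the hyperelliptic Frobenius--Stickelberger relation (Proposition \ref{prop:FSH}), adapted to the cyclic trigonal setting where the $[-1]$ operation is no longer the simple hyperelliptic involution but is given instead by (\ref{eq:[-1]}), $[-1](x_1,y_1) = (x_1,\zeta_3 y_1) + (x_1,\zeta_3^2 y_1)$. Since this Proposition is quoted verbatim from \cite[Theorem 5.3]{O2}, the task here is to indicate the skeleton of that argument rather than to reprove it in full. First I would fix $n-1$ of the points, say $(x_2,y_2),\ldots,(x_n,y_n)$, and regard both sides of (\ref{eq:FS3}) as functions of the remaining point $P=(x_1,y_1)=:P_1$, equivalently as functions of $u^{(1)}=w(P_1)$ on the curve embedded in $\CC^g$ via the Abel map. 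The right-hand side $\psi_n \varphi_n$ is, up to the fixed data, a meromorphic function of $P_1$ on $X$ whose divisor can be read off: $\psi_n(P_1,\ldots,P_n)$ is (a constant times) the Mumford-type function $\mu_{n-1}(P_1;P_2,\ldots,P_n)$ of Proposition \ref{prop:mul} evaluated with $P=P_1$, hence vanishes exactly at $P_2,\ldots,P_n$ and has its only pole at $\infty$ of the expected order $N(n-1)$; the extra Vandermonde-type factor $\varphi_n$ accounts for the difference between the $x$-degree filtration and the $\phi$-degree filtration that is peculiar to the trigonal case.

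The core of the argument is then a Liouville-type comparison. On the left-hand side, $\sigma_{\natural_n}(\sum_i u^{(i)})$ vanishes, as a function of $u^{(1)}$, precisely when $\sum_i u^{(i)}$ lands in the appropriate stratum $\Theta^{n-1}$ (by the vanishing Proposition \ref{vanishingTh} together with Theorem \ref{algebraic}), which translates via the Abel map into $P_1 \in \{P_2,\ldots,P_n\}$ — matching the zeros of $\psi_n$. Each factor $\sigma_{\natural_2}(u^{(1)}+\hzeta_3 u^{(j)})\sigma_{\natural_2}(u^{(1)}+\hzeta_3^2 u^{(j)})$ in the numerator vanishes when $u^{(1)}+\hzeta_3 u^{(j)}$ or $u^{(1)}+\hzeta_3^2 u^{(j)}$ lies in $\Theta^1 = \WW^1$, i.e.\ (using (\ref{eq:[-1]})) when $P_1$ coincides with one of the Galois conjugates entering $[-1]P_j$; one checks these coincidences cancel against the corresponding contributions so that the ratio has no spurious zeros. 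The denominator $\sigma_{\natural_1}(u^{(1)})^{2n-1}$ contributes a pole at $P_1=\infty$; using Lemma 2 (the $\mwdeg$ computation), the Galois-covariance (\ref{eq:sigam_zeta}) and its refinement $\sigma_{\natural_k}(\hzeta_r u) = \zeta_r^{|\Lambda^{(k)}|(r_{g-1}+1)}\sigma_{\natural_k}(u)$, together with Nakayashiki's expansion Proposition \ref{prop:Nakayashiki}, one verifies that the order of this pole at $\infty$ agrees with that of $\psi_n\varphi_n$. Hence the quotient of the two sides is a holomorphic function on all of $X$, therefore constant; evaluating at a convenient degenerate configuration (e.g.\ letting points flow to $\infty$ and comparing leading coefficients via Proposition \ref{prop:Nakayashiki}, where the Schur-function leading term $S_\Lambda$ is explicit) pins the constant to $1$. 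An induction on $n$ is used to feed the $n-1$ point case into the comparison when controlling the product $\prod_{i<j}$.

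The main obstacle I anticipate is the bookkeeping of divisors and orders in the trigonal case, where — unlike the hyperelliptic situation noted after Proposition \ref{prop:addition} — the $[-1]$ map is two-valued in the fibre sense and the relevant strata $\Theta^k$ are genuinely different from $\WW^k$. Getting the zero of $\sigma_{\natural_2}(u^{(i)}+\hzeta_3 u^{(j)})\sigma_{\natural_2}(u^{(i)}+\hzeta_3^2 u^{(j)})$ to match exactly the factor in $\psi_n\varphi_n$ requires careful use of the Galois action on both the curve (Remark \ref{rmk:sigma_zeta}) and on $\sigma$ and its derivatives, and the appearance of the Vandermonde factor $\varphi_n$ — absent in the hyperelliptic formula (\ref{eq:FS0}) — is exactly the correction term that makes the weights balance. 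Verifying the exponent $2n-1$ on $\sigma_{\natural_1}(u^{(i)})$ in the denominator (versus $n$ in the hyperelliptic case) is the cleanest diagnostic that the degree count has been done correctly, and is where I would concentrate the checking; all of this is carried out in detail in \cite{O2}, and here one need only cite it.
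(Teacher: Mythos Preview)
The paper gives no proof here: Proposition \ref{prop:FSRTC} is imported from \cite[Theorem 5.3]{O2} and stated without argument, exactly as you acknowledge in your last sentence. So there is nothing in the paper to compare your sketch against, and your instinct simply to cite \cite{O2} matches what the authors do.

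If you intend the sketch as an outline of the argument in \cite{O2}, two details are off. First, you write ``$\Theta^1=\WW^1$'', but this is precisely what fails in the trigonal case: the paper remarks after (\ref{eq:Theta:k1}) that $\Theta^k=\WW^k$ holds only for hyperelliptic curves; for the $(3,4)$ curve $[-1]\WW^1$ is a separate one-dimensional locus inside $\WW^2$, and the vanishing of $\sigma_{\natural_2}$ must be analysed on $\Theta^1=\WW^1\cup[-1]\WW^1$, not on $\WW^1$ alone. Second, as a function of $P_1$ the determinant $\psi_n$ has a pole of order $N(n-1)$ at $\infty$ and hence $N(n-1)$ zeros, not merely the $n-1$ zeros at $P_2,\ldots,P_n$; by Lemma \ref{prop:2theta2} the remaining $N(n-1)-(n-1)$ zeros lie at the $\alpha_{n-1}$-image of $(P_2,\ldots,P_n)$. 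Matching \emph{those} extra zeros, together with the zeros of $\varphi_n$ at $x_1=x_j$, against the numerator factors $\sigma_{\natural_2}(u^{(1)}+\hzeta_3^a u^{(j)})$ is the actual content of the divisor comparison --- these factors are not spurious contributions to be cancelled away, but the very terms that account for the additional zeros on the right-hand side.
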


\begin{corollary}\label{cor:Lim342}{\rm{\cite[Theorem 5.3, Lemma 4.3]{O2}}}
For a cyclic $(3,4)$ curve, in particular,
let $(x, y)$ and 
 $(x_i,y_i)_{i=1,2}$ be points in $X\times \SSS^2(X)$
and $(u, v_1, v_2)$ be a point in
 $\kappa^{-1} \WW^1 
\times \kappa^{-1} \WW^1 \times \kappa^{-1} \WW^1$
such that
$u = w(x,y)$ and $v_i = w(x_i,y_i)$.
Then the following three relations hold:
{
\begin{equation*}
\begin{array}{rl}
&\displaystyle{\frac{\sigma_{\natural_{3}}(u + v_1 + v_2)
\prod_{a=1}^2\left(
 \sigma_{\natural_{2}}(u +\hzeta^a v_1) 
 \sigma_{\natural_{2}}(u +\hzeta^a v_2)
 \sigma_{\natural_{2}}(v_1 +\hzeta^a v_2)\right)
 }
{\sigma_{\natural_1}(u)^5 \sigma_{\natural_1}(v_1)^5
\sigma_{\natural_1}(v_2)^5 }} \\
&=- (x-x_1)(x-x_2)(x_1-x_2)\left(
y(x_1-x_2)  -y_1(x-x_2)  +y_2(x-x_1)\right) ,
\end{array}
\end{equation*}
\begin{equation*}
\frac{\sigma_{\natural_{2}}(v_1 + v_2)
 \sigma_{\natural_{2}}(v_1 +\hzeta v_2) 
 \sigma_{\natural_{2}}(v_1 +\hzeta^2 v_2)
 }
{\sigma_{\natural_1}(v_1)^3
\sigma_{\natural_1}(v_2)^3 } 
= \left|
\begin{array}{cc}
1 & x_{1} \\
1 & x_{2}  \\
\end{array}
\right|^2 ,
\end{equation*}
}
$$
\frac{\sigma_{\natural_{2}}(2u)}{\sigma_{\natural_1}(u)^4} = 3 y^2.
$$
\end{corollary}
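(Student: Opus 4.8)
All three formulae are deduced from the trigonal Frobenius--Stickelberger relation, Proposition~\ref{prop:FSRTC}, specialised to the $(3,4)$ curve, for which $\phi_0=1$, $\phi_1=x$, $\phi_2=y$ (Table~2.2) and the holomorphic differentials are (\ref{eq:1stkind34}). The first two formulae are the cases $n=3$ and $n=2$ of that relation, read off after evaluating the Frobenius--Stickelberger and Vandermonde determinants explicitly; the third is obtained from the case $n=2$ by letting the two points collide.

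\textbf{The first two identities.} Take $n=3$ in Proposition~\ref{prop:FSRTC} with points $P_1=(x,y)$, $P_2=(x_1,y_1)$, $P_3=(x_2,y_2)$ and $u^{(1)}=u$, $u^{(2)}=v_1$, $u^{(3)}=v_2$. The product over the pairs $i<j$ regroups into
$$
\prod_{a=1}^{2}\bigl(\sigma_{\natural_2}(u+\hzeta_3^{a}v_1)\,\sigma_{\natural_2}(u+\hzeta_3^{a}v_2)\,\sigma_{\natural_2}(v_1+\hzeta_3^{a}v_2)\bigr),
$$
the exponent $2n-1$ equals $5$, and $\sigma_{\natural_3}=\sigma$ since $\natural_3$ is empty for $g=3$; this is precisely the left-hand side of the first identity. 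It remains to compute $\psi_3\varphi_3$: $\psi_3$ is the $3\times3$ determinant with rows $(1,x,y),(1,x_1,y_1),(1,x_2,y_2)$, which by expansion along the first row equals $-\bigl(y(x_1-x_2)-y_1(x-x_2)+y_2(x-x_1)\bigr)$, while $\varphi_3$ is the Vandermonde determinant in $x,x_1,x_2$; multiplying these and collecting signs gives the stated polynomial. The case $n=2$ with $u^{(1)}=v_1$, $u^{(2)}=v_2$ is the same argument with only the pair $(1,2)$ present, exponent $3$, and right-hand side $\psi_2\varphi_2=(x_1-x_2)^2$, which is the second identity.

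\textbf{The doubling identity.} Both sides of the second identity are analytic in $v_2$ on $\kappa^{-1}(\WW^1)$, so it persists as $v_2\to v_1=:u$ with the second point $(x_2,y_2)\to(x_1,y_1)=(x,y)$ along the curve. Put $\epsilon:=x_1-x_2$ and $\delta:=v_2-v_1$, so by (\ref{eq:1stkind34})
$$
\delta=-\left(\frac{1}{3y^2},\ \frac{x}{3y^2},\ \frac{1}{3y}\right)\epsilon+O(\epsilon^2).
$$
Using the $\ZZ_3$-action of Remark~\ref{rmk:sigma_zeta} one checks $v_1+\hzeta_3 v_1=\hzeta_3^{2}(-v_1)$ and $v_1+\hzeta_3^{2}v_1=\hzeta_3(-v_1)$, and by (\ref{eq:[-1]}) both of these points lie in $[-1]\WW^1\subset\Theta^1$. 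Since $N_1=|\Lambda^{[1]}|=2$, Proposition~\ref{prop:Fay} (equivalently Proposition~\ref{vanishingTh}) shows that $\sigma_{\natural_2}=\sigma_3$ vanishes to exactly first order along $\Theta^1$. Hence, as $\epsilon\to0$, the factor $\sigma_{\natural_2}(v_1+v_2)$ tends to $\sigma_{\natural_2}(2u)\neq0$ (generic $2u$ lies off $\Theta^1$), while $\sigma_{\natural_2}(v_1+\hzeta_3 v_2)$ and $\sigma_{\natural_2}(v_1+\hzeta_3^{2}v_2)$ are each $O(\epsilon)$, with leading coefficients $C_1\epsilon$ and $C_2\epsilon$, where $C_1,C_2$ are the directional derivatives of $\sigma_{\natural_2}$ at $\hzeta_3^{2}(-u)$ and $\hzeta_3(-u)$ along $\hzeta_3\delta$ and $\hzeta_3^{2}\delta$ respectively (divided by $\epsilon$); the denominator tends to $\sigma_{\natural_1}(u)^6$. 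Dividing the second identity by $\epsilon^2$ and passing to the limit then gives $\sigma_{\natural_2}(2u)/\sigma_{\natural_1}(u)^6=1/(C_1C_2)$, so the remaining task is to show $C_1C_2=\sigma_{\natural_1}(u)^2/(3y^2)$; this evaluation, together with the cancellation of the $\ZZ_3$-phases carried by those derivatives, is supplied by \cite[Lemma~4.3]{O2}, the factor $3y^2$ originating in the differential $\nuI_1=dx/3y^2$.

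\textbf{Main obstacle.} Once Proposition~\ref{prop:FSRTC} is granted, the first two identities are immediate; the work lies entirely in the doubling identity. The delicate steps are: (i) verifying that $\sigma_{\natural_2}$ vanishes to \emph{exactly} first order along $\Theta^1$ in the perturbation direction $\hzeta_3\delta$, so that the zeros of order $\epsilon^2$ on the two sides match with no lower-order term; and (ii) computing $C_1$ and $C_2$ precisely enough to see the $\ZZ_3$-phases cancel and the numerical constant collapse to $3y^2$. Tracking those phases through the derivatives, via Remark~\ref{rmk:sigma_zeta} and the Galois transformation law for $\sigma_{\natural_k}$ stated after Proposition~\ref{vanishingTh}, and fixing the overall sign in the first identity, are the points that require care.
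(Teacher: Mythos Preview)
The paper gives no proof of this corollary beyond the citation to \cite[Theorem~5.3, Lemma~4.3]{O2}, and your approach is precisely what that citation structure encodes: the first two identities are the specialisations $n=3$ and $n=2$ of Proposition~\ref{prop:FSRTC}, and the doubling identity follows by a collision limit in the $n=2$ formula.  Your sketch of the limit is correct; the cleanest way to complete it is via the lemma proved later in Section~7.3 (there attributed to \cite[Lemma~6.1]{O2}), which gives
\[
\lim_{v_2\to v_1}\frac{\sigma_{\natural_1}(v_1)(x_1-x_2)}{\sigma_{\natural_2}(v_1+\hzeta_3^a v_2)}=(\zeta_3^a-1)\,y,
\]
so that dividing the second identity by $(x_1-x_2)^2$ and letting $v_2\to v_1=u$ yields $\sigma_{\natural_2}(2u)/\sigma_{\natural_1}(u)^4=(\zeta_3-1)(\zeta_3^2-1)\,y^2=3y^2$ directly, without needing to analyse the order of vanishing of $\sigma_{\natural_2}$ along $\Theta^1$ separately.

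One small remark: with the paper's conventions for $\psi_n$ and $\varphi_n$, a direct computation gives $\psi_3\varphi_3=+(x-x_1)(x-x_2)(x_1-x_2)\bigl(y(x_1-x_2)-y_1(x-x_2)+y_2(x-x_1)\bigr)$, i.e.\ the opposite sign to what is printed in the first identity.  The ``sign care'' you flag is therefore genuine, but it reflects a typo (or a convention imported from \cite{O2}) rather than a gap in your argument.
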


\section{The Prime form of an $(r,s)$ curve and the sigma function}

\subsection{The Prime form of an $(r,s)$ curve}

For the universal covering $\tX$ of $X$,
let us define the prime form as follows:
\begin{definition}
The prime form $E(P, Q)$ for 
a point $P, Q \in \tX$ is
defined by
\begin{equation}
E(P, Q) := \frac{\theta
     \left[\begin{array}{c} \Edelta'\\ \Edelta''\end{array}\right]
               (\int^P_Q \hat \nuI)}{
               \sqrt{\zeta(P)} \sqrt{\zeta(Q)}},
\label{eq:EPQ}
\end{equation}
where
$\Edelta'$ and $\Edelta''$ are a fixed nonsingular odd theta
characteristics and $\zeta(P)$ is
$$
\zeta(P) := \sum_{i=1}^g
\left(\partial_i\theta\left[\begin{array}{c} \Edelta'\\ \Edelta''\end{array}
          \right] (0)\right) \hat \nuI_i(P).
$$
\end{definition}

We should note that the prime form is a $(-1/2)\otimes(-1/2)$ form
of $\tX$.
Due to the Abel map, for  $\hell \in \hPi$, we have
a natural translation action $\gamma_\hell$ on $\tX$ such that
$\hw(\gamma_\hell(P)) = \hw(P) + \hell$.

In "Tata lectures on theta II" \cite[3.210]{Mu},
 Mumford shows that
\begin{proposition} \label{prop:EPQ}
The prime form satisfies the following relations:
\begin{enumerate}
\item 
$E(P_1, P_2)$ vanishes iff $P_1 = P_2$.

{
\item $E(P_1, P_2)$ has a first order zero along the diagonal
$\Delta \subset X \times X$.
}

{
\item $E(P_1, P_2) = -E(P_2, P_1)$.
}

\item 
Let $t_i$ be a local parameter at $P_i$ of $X$ 
such that $\zeta(P_i) = dt_i$, $(i=1,2)$, 
$$
E(P_1, P_2) = \frac{t_1 - t_2}{\sqrt{dt_1}\sqrt{dt_2}}
(1 + d_{\ge}\left((t_1 - t_2)^2\right).
$$

\item $E(P,Q)$ has the properties for the action $\gamma_\hell$
for $\hell \in \hPi$,
$$
E(\gamma_\hell P, Q)
= \ee^{-2\pi\ii( \frac{1}{2} ^t\ell^{\prime\prime}\tau \ell''
+^t\ell'' z +^t \ell'\Edelta'' -^t \ell''\Edelta')} 
 E(P, Q).
$$
\end{enumerate}
\end{proposition}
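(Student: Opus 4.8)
\textbf{Proof plan for Proposition \ref{prop:EPQ}.}
The plan is to verify the five listed properties essentially by unwinding the definition \eqref{eq:EPQ} and invoking standard facts about Riemann theta functions with nonsingular odd characteristics, as collected in \cite[\S 3.2]{Mu}. First I would recall the key input: the Riemann vanishing theorem tells us that for a nonsingular odd theta characteristic $[\Edelta'\ \Edelta'']$, the function $z\mapsto \theta[\Edelta'\ \Edelta''](z)$ vanishes to first order at the origin, and more precisely the associated half-period lies on $\Theta$ with multiplicity exactly one, so $\nabla\theta[\Edelta'\ \Edelta''](0)\neq 0$; this is exactly what makes $\zeta(P)$ a well-defined (not identically zero) holomorphic $1$-form, and gives meaning to the square roots in the denominator as $(-1/2)$-forms on $\tX$. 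For (1) and (2), I would use the Riemann vanishing theorem applied to the divisor class: $\theta[\Edelta'\ \Edelta''](\int_Q^P\hnuI)$ vanishes precisely when $P$ is in the support of the effective divisor representing the characteristic class shifted by $Q$, and the combinatorics of the odd nonsingular characteristic forces this to happen (for $P$ near $Q$) exactly at $P=Q$, with multiplicity one in the local parameter. The denominator $\sqrt{\zeta(P)}\sqrt{\zeta(Q)}$ is nonvanishing there, so $E(P_1,P_2)$ has a simple zero along the diagonal $\Delta$ and no other zeros.

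For (3), antisymmetry, I would use that the characteristic is \emph{odd}: $\theta[\Edelta'\ \Edelta''](-z) = -\theta[\Edelta'\ \Edelta''](z)$, together with $\int_Q^P\hnuI = -\int_P^Q\hnuI$, which flips the sign of the numerator; the denominator is symmetric in $P,Q$, giving $E(P_1,P_2)=-E(P_2,P_1)$. For (4), the local expansion, I would Taylor-expand the numerator about $P_1=P_2=:P$ in a local parameter $t$ normalized so that $\zeta(P)=dt$: since $\theta[\Edelta'\ \Edelta'']$ vanishes to order one at $0$, the leading term is $\sum_i \partial_i\theta[\Edelta'\ \Edelta''](0)\,\hnuI_i$, evaluated on $\int_{P_2}^{P_1}\hnuI = (t_1-t_2)+O((t_1-t_2)^2)$ in the chosen coordinate; this is exactly $\zeta(P)$ paired against $(t_1-t_2)$, so the numerator is $(t_1-t_2)(1+d_{\ge}((t_1-t_2)^2))$ in these coordinates, and dividing by $\sqrt{dt_1}\sqrt{dt_2}$ gives the stated normal form. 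The oddness of $\theta[\Edelta'\ \Edelta'']$ kills the quadratic correction, which is why the error is $d_{\ge}((t_1-t_2)^2)$ rather than $d_{\ge}((t_1-t_2))$. For (5), the quasi-periodicity under $\gamma_\hell$ for $\hell\in\hPi$, I would substitute the transformation law for the normalized theta function (displayed just before this proposition's section, in Chapter VI of \cite{L}) into the numerator: replacing $P$ by $\gamma_\hell P$ changes $\int_Q^P\hnuI$ by $\hell = \ell'+\tau\ell''$, which multiplies the numerator by the quoted exponential factor $\ee^{-2\pi\ii(\frac12{}^t\ell''\tau\ell''+{}^t\ell'' z+{}^t\ell'\Edelta''-{}^t\ell''\Edelta')}$; since $\zeta$ transforms as a form on $\tX$ (and the denominator only sees $\zeta$, which is unaffected by the lattice translation up to the same considerations), the factor passes through to $E(P,Q)$.

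The main obstacle I anticipate is not any single calculation but rather the care needed to make the square-root factors $\sqrt{\zeta(P)}$ genuinely well-defined as $(-1/2)$-forms on the universal cover $\tX$ and to track how they transform under $\gamma_\hell$ in part (5); one must check that the branch of the square root can be chosen consistently on $\tX$ (which is why we pass to the universal cover in the first place) and that the holonomy contributions from the denominator are exactly compatible with the numerator's transformation law so that $E$ itself is a well-defined $(-1/2)\otimes(-1/2)$-form with the stated automorphy. Everything else reduces to the Riemann vanishing theorem plus bookkeeping with the theta transformation formula, all of which is classical and available in \cite{Mu}; I would simply cite \cite[3.210]{Mu} for the substance and supply the short local-coordinate computation for (4) and the sign check for (3).
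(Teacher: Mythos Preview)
Your proposal is correct and, if anything, goes well beyond what the paper does: the paper provides no proof at all for this proposition, merely attributing the result to Mumford via the preamble ``In \textit{Tata lectures on theta II} \cite[3.210]{Mu}, Mumford shows that\ldots'' before the statement. Your sketch of the underlying arguments (Riemann vanishing theorem for (i)--(ii), oddness of the characteristic for (iii), Taylor expansion for (iv), theta transformation law for (v)) is exactly the content behind that citation, so there is nothing to correct or compare.
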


\begin{proposition}
$$
\frac{E(P,Q)}{ E(P,Q')} = \exp \int^P \tau_{Q, Q'}.
$$
where $\tau_{Q, Q'}$ is the differential of the third kind,
which has residues $+1,-1$ at ${Q_1, Q_2}$,
is regular everywhere else,  and is  normalized 
{\it{i.e.}}, $\int_{\alpha_i} \tau_{P, Q} = 0$.
\end{proposition}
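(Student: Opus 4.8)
The plan is to express the quotient $E(P,Q)/E(P,Q')$ as the exponential of a suitable Abelian integral by exploiting the theta-function definition \eqref{eq:EPQ} and the classical formula for the normalized differential of the third kind in terms of the theta function. First I would write out the quotient explicitly using \eqref{eq:EPQ}: the denominators $\sqrt{\zeta(P)}$ cancel in the numerator and denominator only partially, so that
$$
\frac{E(P,Q)}{E(P,Q')} = \frac{\theta[\Edelta](\int_Q^P\hnuI)}{\theta[\Edelta](\int_{Q'}^P\hnuI)}\cdot\frac{\sqrt{\zeta(Q')}}{\sqrt{\zeta(Q)}}.
$$
Viewed as a function of $P$ with $Q,Q'$ fixed, the right-hand side is a single-valued meromorphic function on $\tX$ whose divisor is $Q-Q'$ (using part (1) of Proposition \ref{prop:EPQ}: the numerator vanishes exactly at $P=Q$, the denominator exactly at $P=Q'$). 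Hence its logarithmic derivative $d_P\log\big(E(P,Q)/E(P,Q')\big)$ is a meromorphic one-form on $X$ with simple poles of residue $+1$ at $Q$ and $-1$ at $Q'$ and no other singularities.

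Next I would check the normalization, namely that this one-form has vanishing $\alpha$-periods. This follows from part (5) of Proposition \ref{prop:EPQ} (or equivalently from the standard quasi-periodicity of $\theta[\Edelta]$ recorded just before that Proposition): under the action of $\gamma_{\hell}$ for $\hell$ an $\alpha$-cycle, the automorphy factors of $\theta[\Edelta](\int_Q^P\hnuI)$ and $\theta[\Edelta](\int_{Q'}^P\hnuI)$ differ only by the term $\ee^{-2\pi\ii\,{}^t\ell''\!\int_{Q'}^Q\hnuI}$ coming from the differing base points; but for an $\alpha$-cycle $\ell''=0$, so the ratio is genuinely invariant, i.e. the $\alpha$-periods of the logarithmic derivative vanish. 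Therefore $d_P\log\big(E(P,Q)/E(P,Q')\big)$ coincides with the normalized third-kind differential $\tau_{Q,Q'}$ characterized in the statement, since such a differential is uniquely determined by its residue divisor together with the vanishing of its $\alpha$-periods.

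Finally I would integrate: since both sides agree as one-forms on $X$, we get
$$
\log\frac{E(P,Q)}{E(P,Q')} = \int^P \tau_{Q,Q'} + \text{const},
$$
and the constant is fixed to be zero by evaluating at a convenient point, or more precisely by matching the local behavior near $P\to\infty$ (the base point of the Abel map); here the normalization of $\zeta(P)$ entering the definition of $E$ and the choice of the lower limit in $\int^P\tau_{Q,Q'}$ are exactly compensatory, which is the reason the formula is stated with an indefinite integral symbol $\int^P$. Exponentiating gives the claimed identity.

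The main obstacle I anticipate is the bookkeeping of the constant of integration and the precise meaning of the indefinite integral $\int^P\tau_{Q,Q'}$: one must be careful that the path of integration and the lower endpoint are chosen consistently with the branch of $\sqrt{\zeta(P)}$ and $\sqrt{\zeta(Q)}$ used in defining $E$, and that the identity is understood on the universal cover $\tX$ rather than on $X$ itself (the left side is genuinely single-valued, whereas $\int^P\tau_{Q,Q'}$ acquires additive $2\pi\ii\ZZ$ periods around $\beta$-cycles, matching the multivaluedness already present in $\theta[\Edelta](\int_Q^P\hnuI)$ through its $\beta$-quasi-periodicity). Once this is tracked carefully — essentially a restatement of the classical fact that the prime form is the multiplicative primitive of the third-kind differential — the proof is routine; this is indeed precisely the content of \cite[3.210]{Mu} cited above, so the argument can be compressed to a reference plus the residue and $\alpha$-period verification.
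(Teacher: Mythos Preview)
Your argument is correct and is the standard proof of this classical identity. Note, however, that the paper does not actually supply its own proof of this proposition: it is stated without proof, immediately followed by a remark about Green's functions. The result is a well-known fact about the prime form (see for instance Fay \cite{F} or Mumford \cite{Mu}), and the authors evidently take it as background. Your write-up therefore provides precisely the details the paper omits; the residue computation via Proposition~\ref{prop:EPQ}(1) and the $\alpha$-period check via the quasi-periodicity in Proposition~\ref{prop:EPQ}(5) are exactly the right ingredients.
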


\begin{remark}
This fact may be used to construct the Green's function for the Laplacian on the Riemann surface $X$.
\end{remark}

Following Nakayashiki's definition of the prime form \cite[Definition 3]{N1}
\begin{definition}
we define another prime form:
\begin{equation}
\cE(P,Q):= \ee^{^t(w(P)-w(Q))\gamma (w(P)-w(Q)))} E(P,Q) 
\label{eq:cEPQ}
\end{equation}
where $\gamma := \omega^{\prime -1} \eta'$.
\end{definition}
\begin{remark} 
{\rm{
Our definition differs slightly from that in \cite{N1}, as
here $\cE(P,Q)$, like Fay's $E(P,Q)$, is defined as a $(-1/2,-1/2)$ form.
}}
\end{remark}

\begin{proposition}\label{prop:cEPQ}
\begin{enumerate}
\item 
$\cE(P_1, P_2)$ vanishes iff $P_1 = P_2$.

{
\item $\cE(P_1, P_2)$ has a first order zero along the diagonal
$\Delta \subset X \times X$.
}

{
\item $\cE(P_1, P_2) = -\cE(P_2, P_1)$.
}

\item 
Let $t_i$ be a local parameter at $P_i$ of $X$ 
such that $\zeta(P_i) = dt_i$, $(i=1,2)$ 
$$
\cE(P_1, P_2) = \frac{t_1 - t_2}{\sqrt{dt_1}\sqrt{dt_2}}
(1 + d_{\ge}\left((t_1 - t_2)^2\right).
$$

\item $\cE(P,Q)$ has the properties for the action $\gamma_\ell$
on $\tX$,
$$
\cE(\gamma_\ell P, Q):= 
\chi(\ell) L\left( (w(P)-w(Q)) + \frac{1}{2} (\ell, \ell)\right) \cE(P,Q) .
$$
where $w(\gamma_\ell(P)) = w(P) + \ell$.
\end{enumerate}
\end{proposition}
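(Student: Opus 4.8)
The plan is to deduce each of the five properties of $\cE(P,Q)$ from the corresponding property of Fay's prime form $E(P,Q)$ in Proposition \ref{prop:EPQ}, using that $\cE$ and $E$ differ only by the everywhere-holomorphic, nowhere-vanishing factor $\exp\!\big({}^t(w(P)-w(Q))\gamma(w(P)-w(Q))\big)$. Since this exponential factor is entire in $w(P)-w(Q)$ and never zero, it does not affect the zero locus, the order of vanishing, or the local leading behaviour; the only genuine work is in parts (3) and (5), where the factor interacts nontrivially with the symmetry and the lattice translation.

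For (1), the exponential prefactor is a unit, so $\cE(P_1,P_2)=0$ iff $E(P_1,P_2)=0$ iff $P_1=P_2$ by Proposition \ref{prop:EPQ}(1). For (2), near the diagonal the prefactor equals $1+O\big((w(P_1)-w(P_2))^2\big)$, hence does not change the order of the zero of $E$ along $\Delta$, which is one. For (4), I would simply multiply the local expansion of $E(P_1,P_2)$ from Proposition \ref{prop:EPQ}(4) by the prefactor; since $w(P_1)-w(P_2)=O(t_1-t_2)$ along the curve, the prefactor contributes only to the $d_{\ge}\big((t_1-t_2)^2\big)$ correction and the leading term $(t_1-t_2)/\sqrt{dt_1}\sqrt{dt_2}$ is unchanged. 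For (3), I note that $w(P)-w(Q)$ is antisymmetric under $P\leftrightarrow Q$, so the quadratic form ${}^t(w(P)-w(Q))\gamma(w(P)-w(Q))$ is symmetric, i.e. the prefactor is invariant under exchanging $P$ and $Q$; combining this with $E(P_1,P_2)=-E(P_2,P_1)$ from Proposition \ref{prop:EPQ}(3) gives $\cE(P_1,P_2)=-\cE(P_2,P_1)$.

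The main obstacle is part (5): tracking how the prefactor transforms under $\gamma_\ell$ and checking that the combination with the transformation law of $E$ in Proposition \ref{prop:EPQ}(5) collapses to the stated form $\chi(\ell)\,L\big((w(P)-w(Q))+\tfrac12(\ell,\ell)\big)\,\cE(P,Q)$. Here I would write $w(\gamma_\ell P)=w(P)+\ell$, expand the quadratic form ${}^t(w(P)+\ell-w(Q))\gamma(w(P)+\ell-w(Q))$, and separate the purely $w(P)-w(Q)$ piece (which reconstitutes the prefactor of $\cE(P,Q)$) from the cross term $2\,{}^t\ell\,\gamma\,(w(P)-w(Q))$ and the constant ${}^t\ell\gamma\ell$. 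One must then match these against the automorphy factor of $E$ from Proposition \ref{prop:EPQ}(5), rewriting $z={}^t{\omega'}^{-1}(w(P)-w(Q))$ and $\tau={\omega'}^{-1}\omega''$, $\ell=2\omega'\ell'+2\omega''\ell''$, and using the generalized Legendre relation (Proposition \ref{prop:gLegendreR}) together with the definitions of $L(u,v)$ and $\chi(\ell)$; this is exactly the same bookkeeping that converts the theta quasi-periodicity into the $\sigma$ quasi-periodicity (\ref{eq:4.11}), so the identity is forced once the pieces are lined up. I expect the remaining difficulty to be purely notational — keeping straight which half-period matrices and characteristics appear — rather than conceptual.
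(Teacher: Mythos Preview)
Your proposal is correct and follows essentially the same approach as the paper: for parts (i)--(iv) the paper simply says ``obvious from the previous Proposition,'' which is exactly your argument that the exponential prefactor is a nowhere-vanishing unit, and for part (v) the paper cites \cite[Proposition 8]{N1} rather than carrying out the Legendre-relation bookkeeping you sketch. Your version is more self-contained for (v), but the underlying idea is the same.
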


{
\begin{proof}
(i)-(iv) are obvious from the previous Proposition.

From \cite[Proposition 8]{N1}, we have (v).
\end{proof}
}

\begin{proposition} \label{prop:7.6}
If a $(-1/2)\otimes(-1/2)$-form on $\tX \times \tX$ has the properties
(i), (ii), (iii), (iv) and (v)
in Proposition \ref{prop:cEPQ}, it has the form of (\ref{eq:cEPQ}).
\end{proposition}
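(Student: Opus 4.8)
The plan is to show that the conditions (i)--(v) pin down a $(-1/2,-1/2)$-form on $\tX\times\tX$ up to a multiplicative constant, and then fix the constant using the local normalization (iv). Suppose $F(P,Q)$ is another $(-1/2)\otimes(-1/2)$-form satisfying (i)--(v). First I would form the quotient $G(P,Q):=F(P,Q)/\cE(P,Q)$. By (i) and (ii) the zeros of $F$ and $\cE$ along the diagonal $\Delta$ are each simple and occur at exactly the same locus, so $G$ extends to a nowhere-zero holomorphic function on $\tX\times\tX$; moreover, being a ratio of two $(-1/2,-1/2)$-forms, $G$ is a genuine function (weight $0$) in each variable. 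Property (v), applied to both $F$ and $\cE$, shows that the automorphy factors cancel, so $G(\gamma_\ell P,Q)=G(P,Q)$ for all $\ell\in\hPi$ (and likewise in $Q$ using (iii) to transfer the statement to the second variable). Hence $G$ descends to a holomorphic function on the compact manifold $X\times X$, and is therefore constant.

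The remaining step is to evaluate that constant. Here I would use (iv): choose a point $P_2=P_0\in X$ and a local parameter $t$ at $P_0$ with $\zeta(P_0)=dt$. Both $\cE$ and $F$ have the expansion $\dfrac{t_1-t_2}{\sqrt{dt_1}\sqrt{dt_2}}\bigl(1+d_{\ge}((t_1-t_2)^2)\bigr)$ near the diagonal, so dividing one by the other and letting $P_1\to P_2$ forces $G\equiv 1$, i.e. $F=\cE$. Since $\cE$ is of the form \eqref{eq:cEPQ} by its very definition, so is $F$, which is the assertion.

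The main obstacle I anticipate is the bookkeeping needed to see that $G$ is truly single-valued on $X\times X$ rather than merely on $\tX\times\tX$: one must check that the exponential automorphy factor in (v) is \emph{the same} for $\cE$ and for the hypothetical $F$ — this is exactly what (v) asserts, with the factor $\chi(\ell)L\bigl((w(P)-w(Q))+\tfrac12(\ell,\ell)\bigr)$ depending only on the curve and not on the particular form — so the cancellation is automatic, but it is worth spelling out that (v) is an \emph{identity of automorphy factors}, not just a transformation law for one specific function. A secondary point is that one should note $\tX\times\tX \to X\times X$ is the universal cover and that a bounded (indeed holomorphic) $\hPi\times\hPi$-periodic function on $\CC^g\times\CC^g$-worth of coordinates is constant by compactness of $X\times X$; this is standard but should be invoked explicitly. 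Everything else is routine.
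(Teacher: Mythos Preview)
Your argument is correct and follows the same strategy as the paper's own proof: form the ratio with $\cE$, use the matching automorphy factors from (v) to see the ratio is single-valued, use (i)--(ii) to see it has no zeros or poles, conclude it is constant by compactness, and pin the constant to $1$ via the diagonal expansion (iv). Your write-up is in fact more careful than the paper's, which is quite terse (and has some mislabelled references to the conditions); in particular, your remarks that the ratio has weight zero, that (v) is an identity of automorphy factors so cancellation is automatic, and that (iii) transfers periodicity to the second variable, make explicit points the paper leaves implicit.
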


\begin{proof}
If there is another $(-1/2)\otimes(-1/2)$-form $\cE'(P,Q)$
on $\tX \times \tX$ with these properties,
the quasi-periodic properties (iv) show that
$\cE'(P,Q)/\cE(P,Q)$ must be a meromorphic function
over $\CC^g$. Further the vanishing properties (iii) determine
that the meromorphic function has no zero and thus must be
a constant function. By (iv), this constant is equal to $1$.
\end{proof}

This uniqueness theorem will enable us to prove the main results of 
this paper, the following explicit formulae for the Prime form.

For any hyperelliptic $(2,2g+1)$ curve, we have
$$
\cE(P,Q) =\frac{ \sigma_{\natural_{2}}(u -  v)}
{\sqrt{du_1}\sqrt{d v_1}}.
$$

Further, for a cyclic $(3,4)$ curve, we have
$$\cE(P,Q) =\frac{ \sigma(u - v)}
{\sqrt{du_1}\sqrt{d v_1}}
\equiv
\frac{ \sigma_{\natural_{3}}(u - v)}
{\sqrt{du_1}\sqrt{d v_1}}.
$$

We will prove these results in the next two subsections.

\subsection{The Prime form - hyperelliptic case}

We will prove the theorem for the case of a  hyperelliptic curve;
first we will establish several preliminary results.

First, from Proposition \ref{prop:FSH}, we directly have the
following;
\begin{proposition} \label{lm:add2H}
Let $(x, y)$ and $(x',y')$ be points in $X$
and 
$(u, v) \in $ $\CC^g \times \CC^g$ such that
$u = w(x,y)$ and
$v = w(x',y')$.
$$
\frac{
\sigma_{\natural_2}(u+ v)
\sigma_{\natural_2}(u- v)}
{\sigma_{\natural_1}(u)^2\sigma_{\natural_1}(v)^2}
= (x - x').
$$
\end{proposition}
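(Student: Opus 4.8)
The plan is to obtain Proposition \ref{lm:add2H} as the $n=2$ instance of the Frobenius--Stickelberger relation of Proposition \ref{prop:FSH}; essentially no work is required beyond a substitution and a sign check, as the phrase ``directly have the following'' already signals.

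First I would apply Proposition \ref{prop:FSH} with $n=2$, choosing $u^{(1)}=u=w((x,y))$ and $u^{(2)}=v=w((x',y'))$; these are admissible, since $w((x,y))$ and $w((x',y'))$ lie in $\kappa^{-1}(\WW^1)$. On the left-hand side the product $\prod_{i<j}\sigma_{\natural_2}(u^{(i)}-u^{(j)})$ reduces to the single factor $\sigma_{\natural_2}(u-v)$, the term $\sigma_{\natural_n}(\sum_i u^{(i)})$ becomes $\sigma_{\natural_2}(u+v)$, and $\prod_{i=1}^2\sigma_{\natural_1}(u^{(i)})^2$ becomes $\sigma_{\natural_1}(u)^2\sigma_{\natural_1}(v)^2$; this is exactly the left-hand side of the asserted identity. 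For the right-hand side I would use the definition of the unbordered Frobenius--Stickelberger determinant at two points,
$$
\psi_2((x,y),(x',y'))=\left|\begin{array}{cc}1 & x\\ 1 & x'\end{array}\right| = x'-x,
$$
so that Proposition \ref{prop:FSH} gives the left-hand side equal to $\epsilon_2\,(x'-x)$.

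It then remains only to verify that $\epsilon_2=-1$, which follows from the explicit value of $\epsilon_n$ in Proposition \ref{prop:FSH} at $n=2$. As a consistency check one notes that the left-hand side is antisymmetric under $u\leftrightarrow v$ --- equivalently, that $\sigma_{\natural_2}$ is an odd function, which is forced by the compatibility of Proposition \ref{prop:FSH} with permutations of the points (both $\psi_n$ and $\prod_{i<j}\sigma_{\natural_2}(u^{(i)}-u^{(j)})$ then acquire the sign of the permutation) --- and this matches the antisymmetry of $x-x'$. Combining, the left-hand side equals $-(x'-x)=x-x'$, which is the claim, and it reduces to the genus-one prototype $(\ref{eq:add_g1})$ when $g=1$. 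The only delicate point is bookkeeping the overall sign $\epsilon_2$ in a way consistent with the convention fixing which point carries $u$ and which carries $v$, and with the chosen normalisation of the multi-index $\natural_2$ (cf.\ the sign remark following Theorem \ref{algebraic}); no genuine obstacle arises, since the substance of the statement is already contained in Proposition \ref{prop:FSH}.
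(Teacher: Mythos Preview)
Your approach is correct and is exactly the paper's: the authors simply say ``from Proposition~\ref{prop:FSH}, we directly have the following'' and give no further argument, so specialising the Frobenius--Stickelberger relation to $n=2$ is precisely what is intended. The one quibble is that the stated formula $\epsilon_n=(-1)^{g+n(n+1)/2}$ gives $\epsilon_2=(-1)^{g+3}$, which is not literally $-1$ for all $g$; as you yourself note, the residual sign is a matter of matching the conventions for $\natural_k$ here with those in \cite{O1}, not a substantive issue.
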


\begin{remark} 
{\rm{
Proposition \ref{lm:add2H} 
is a natural generalisation of
the addition formula in the $g=1$ case (\ref{eq:add_g1}).
}}
\end{remark} 

Let $(x, y)$ and $(x',y')$ be points in $X$
and $(u, v) \in $ $\CC^g \times \CC^g$ such that
$u = w(x,y)$ and $v = w(x',y')$.

From \cite[Lemma 9.1]{O1}, we have the relation:
\begin{equation*}
\lim_{u \to v}\frac{\sigma_{\natural_{2}}(u - v)
 } {u_1-v_1}
= 1.
\end{equation*}

This implies the following relation:
\begin{lemma}\label{simplezero}
Let $(x, y)$ and $(x',y')$ be points in $X$
and $(u, v) \in $ $\CC^g \times \CC^g$ such that
$u = w(x,y)$ and $v = w(x',y')$.
$$
\frac{ \sigma_{\natural_{2}}(u -  v)}{\sqrt{du_1}\sqrt{d v_1}}
 = \frac{u_1- v_1}{\sqrt{d u_1}\sqrt{d v_1}} + \cdots.
$$
\end{lemma}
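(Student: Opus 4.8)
The plan is to read off the asserted local expansion from the normalisation \cite[Lemma 9.1]{O1} recalled just above, using only that $\sigma$, and hence $\sigma_{\natural_2}$, is entire on $\CC^g$. First I would fix the set-up: take $(x,y)$ and $(x',y')$ to be \emph{finite} points of $X$ lying in a common coordinate neighbourhood. By (\ref{eq:dnu_tinf}) we have $\nuI_1=t_\infty^{2g-2}(1+d_{>0}(t_\infty))\,dt_\infty$, so $\nuI_1$ has its only zero at $\infty$ and is nowhere zero on $X\setminus\infty$. Consequently $u_1=\int_\infty^{(x,y)}\nuI_1$ is a local parameter near $(x,y)$ and $v_1=\int_\infty^{(x',y')}\nuI_1$ a local parameter near $(x',y')$; the half-differentials $\sqrt{du_1}$ and $\sqrt{dv_1}$ are then well defined and nowhere zero, so dividing by $\sqrt{du_1}\sqrt{dv_1}$ is legitimate.

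Next, keeping $Q=(x',y')$ (hence $v$ and $v_1$) fixed and letting $P=(x,y)$ vary, note that $P$, and therefore $w(P)=u$, depends holomorphically on $u_1$; since $\sigma_{\natural_2}$ is entire, the function $u_1\mapsto\sigma_{\natural_2}(u-v)$ is holomorphic near $u_1=v_1$. By \cite[Lemma 9.1]{O1} we have $\sigma_{\natural_2}(u-v)/(u_1-v_1)\to1$ as $u_1\to v_1$, so this holomorphic function has a zero of order exactly one at $u_1=v_1$ with leading coefficient $1$; that is, its Taylor expansion reads
$$ \sigma_{\natural_2}(u-v)=(u_1-v_1)+(u_1-v_1)^2\,(\mbox{holomorphic in }u_1,v_1). $$
Dividing by $\sqrt{du_1}\sqrt{dv_1}$ gives
$$ \frac{\sigma_{\natural_2}(u-v)}{\sqrt{du_1}\sqrt{dv_1}}=\frac{u_1-v_1}{\sqrt{du_1}\sqrt{dv_1}}+d_{\ge}((u_1-v_1)^2), $$
which is exactly the assertion, the $+\cdots$ standing for the correction $d_{\ge}((u_1-v_1)^2)$ (symmetric in the two local parameters, because the whole expression is antisymmetric under interchange of $P$ and $Q$). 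In particular $\sigma_{\natural_2}(u-v)$ has a simple zero along the diagonal, consistently with Proposition \ref{prop:cEPQ}(ii).

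The argument is essentially formal, so there is no serious obstacle within it; the only points deserving a word of care are the non-vanishing of $\nuI_1$ off $\infty$ (needed both for $u_1$ to be a genuine local parameter and for $\sqrt{du_1}$ to make sense) and the observation that $u_1\mapsto\sigma_{\natural_2}(u-v)$ is \emph{holomorphic}, not merely continuous, near the diagonal, which is what upgrades the stated limit to an honest Taylor expansion. The genuinely substantive input — that the normalising constant in \cite[Lemma 9.1]{O1} equals $1$ exactly, rather than some other nonzero scalar — is the real work, but that identity is supplied by \cite{O1}; it itself rests on Nakayashiki's expansion of $\sigma$ about the origin (Proposition \ref{prop:Nakayashiki}) together with the local behaviour of the coordinates $u_i$ near a point of $X$.
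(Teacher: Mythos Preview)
Your proof is correct and takes essentially the same approach as the paper: both derive the expansion directly from the limit $\lim_{u\to v}\sigma_{\natural_2}(u-v)/(u_1-v_1)=1$ of \cite[Lemma 9.1]{O1}, with the paper simply stating ``This implies the following relation'' and you filling in the routine justification (that $u_1$ is a genuine local parameter off $\infty$ and that holomorphicity upgrades the limit to a Taylor expansion).
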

\begin{remark}
{\rm{
 This establishes 
conditions {\it{(ii)}} and {\it{(iv)}}
 of \ref{prop:7.6}.
Further Lemma 4.3 in \cite{O1} shows that
$\sigma$ function is an odd or even function with respect to 
$u\in \CC^g$. Hence this also establishes 
{\it{(iii)}} of \ref{prop:7.6}.
}}
\end{remark}

Secondly, from \cite[Proposition 7.5]{O1}, we have the relation:
\begin{lemma}\label{lm:LimHyp}{\rm{\cite[Proposition 7.5]{O1}}}
Let $(x, y)$ and $(x',y')$ be points in $X$
and $(u, v) \in $ $\CC^g \times \CC^g$ such that
$u = w(x,y)$, $v = w(x',y')$  and  $\ell$
($=2\omega'\ell'+2\omega''\ell''$) $\in\Pi$.
Then the following quasi-periodicity relation holds
\begin{equation*}
\frac{ \sigma_{\natural_{2}}(u + \ell -  v)}{\sqrt{du_1}\sqrt{d v_1}}
=
\exp(L(u+\frac{1}{2}\ell, \ell)) \chi(\ell)
\frac{ \sigma_{\natural_{2}}(u + \ell -  v)}{\sqrt{du_1}\sqrt{d v_1}}.
\end{equation*}
\end{lemma}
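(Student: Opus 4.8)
The plan is to derive this quasi-periodicity directly from the quasi-periodicity (\ref{eq:4.11}) of $\sigma$ itself by differentiation, the point being that the extra terms produced by the Leibniz rule all vanish once the argument is specialised to $u-v$. First I would observe that the form factors play no role: $du_1$ is the pull-back to $\tX$ of the holomorphic differential $\nuI_1$, and under the translation induced by $\ell$ the coordinate $u_1$ changes only by the additive constant $\ell_1$, so $du_1$ is unchanged; $dv_1$ is untouched since $Q$ is fixed. Hence it suffices to control how $\sigma_{\natural_2}$ transforms under $\tilde u\mapsto\tilde u+\ell$, where $\tilde u:=u-v$.

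Next, write $n_2:=\#\natural_2$ and let $c_i$ be the $i$-th component of the constant vector $2(\eta'\ell'+\eta''\ell'')$. Applying the operator $\prod_{i\in\natural_2}\partial/\partial u_i$ to the identity $\sigma(u+\ell)=\sigma(u)\exp\bigl(L(u+\frac{1}{2}\ell,\ell)\bigr)\chi(\ell)$ of entire functions of $u\in\CC^g$, and using that $L(u+\frac{1}{2}\ell,\ell)=2\,{}^t(u+\frac{1}{2}\ell)(\eta'\ell'+\eta''\ell'')$ is affine-linear in $u$ (so that each $\partial_{u_i}$ acting on $\exp(L)$ merely multiplies it by $c_i$), the Leibniz rule yields
$$
\sigma_{\natural_2}(u+\ell)=\chi(\ell)\,\ee^{L(u+\frac{1}{2}\ell,\ell)}\sum_{S\subseteq\natural_2}\Bigl(\prod_{j\in\natural_2\setminus S}c_j\Bigr)\sigma_S(u),
$$
in which $S=\natural_2$ gives the wanted leading term $\sigma_{\natural_2}(u)\,\ee^{L}\chi$ and every remaining summand carries a factor $\sigma_S(u)$ with $S\subsetneq\natural_2$.

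The crux — and the step I expect to need the most care — is to kill all the $S\subsetneq\natural_2$ terms after setting $u=\tilde u=u-v$. For a hyperelliptic curve with a branch point at $\infty$ one has $Q+\iota Q\sim 2\infty$ for the hyperelliptic involution $\iota$, hence $w(Q)\equiv-w(\iota Q)$ modulo $\Pi$, so $\tilde u=w(P)-w(Q)\equiv w(P+\iota Q)\in\WW^2\subseteq\Theta^2$, and for generic $P,Q$ in fact $\tilde u\in\Theta^2\setminus(\Theta^2_1\cup\Theta^1)$. Proposition~\ref{vanishingTh}, applied with $k=2$ and $I_2=\natural_2\in\cI_2$, then says $\sigma_S\equiv 0$ on $\kappa^{-1}(\Theta^2\setminus(\Theta^2_1\cup\Theta^1))$ for every proper subset $S\subsetneq\natural_2$; so each such $\sigma_S(\tilde u)$ vanishes for generic $(P,Q)$, and hence, by analyticity in $(P,Q)$, identically. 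Substituting $u=\tilde u$ therefore collapses the displayed formula to $\sigma_{\natural_2}(\tilde u+\ell)=\sigma_{\natural_2}(\tilde u)\exp\bigl(L(\tilde u+\frac{1}{2}\ell,\ell)\bigr)\chi(\ell)$; dividing by the (unchanged) factor $\sqrt{du_1}\sqrt{dv_1}$ and re-expressing $L(\tilde u+\frac{1}{2}\ell,\ell)$ in the variables $u,v$ gives the asserted quasi-periodicity of $\sigma_{\natural_2}(u-v)/(\sqrt{du_1}\sqrt{dv_1})$. The degenerate low-genus cases in which $\natural_2=\emptyset$ (so $\sigma_{\natural_2}=\sigma$) are immediate from (\ref{eq:4.11}).
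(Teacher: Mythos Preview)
Your argument is correct. The paper itself offers no proof here --- it simply cites \cite[Proposition~7.5]{O1} --- so there is no in-paper argument to compare against. The route you take (differentiate the quasi-periodicity identity~(\ref{eq:4.11}) for $\sigma$, expand by Leibniz, then kill every cross term $\sigma_S$ with $S\subsetneq\natural_2$ by noting that $u-v\equiv w(P+\iota Q)$ lies in $\kappa^{-1}(\Theta^2)$ via the hyperelliptic involution and invoking Proposition~\ref{vanishingTh}) is the standard one, and is self-contained within the framework the paper has already set up.

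Two minor remarks. First, the displayed formula in the lemma carries evident typos: the right-hand side should have $\sigma_{\natural_2}(u-v)$ rather than $\sigma_{\natural_2}(u+\ell-v)$, and the exponent should read $L\bigl((u-v)+\tfrac{1}{2}\ell,\ell\bigr)$; your derivation produces exactly this corrected version, as it must. Second, the density-plus-analyticity step can be shortened: since each $\sigma_S$ is entire and, by Proposition~\ref{vanishingTh}, vanishes on the open dense subset $\kappa^{-1}\bigl(\Theta^2\setminus(\Theta^2_1\cup\Theta^1)\bigr)$ of $\kappa^{-1}(\Theta^2)$, continuity already gives $\sigma_S\equiv 0$ on all of $\kappa^{-1}(\Theta^2)$; hence $\sigma_S(u-v)=0$ for \emph{every} pair $P,Q\in X$ without a separate appeal to analyticity in $(P,Q)$.
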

\begin{remark}
{\rm{
This establishes condition {\em{(v)}} of \ref{prop:7.6}.
}}
\end{remark}

\begin{lemma}\label{lm:LimHyp}
Let $(x, y)$ and $(x',y')$ be points in $X$
and $(u, v) \in $ 
$\kappa^{-1}\left(\Theta^1\setminus \Theta^{0}\right)$ 
$\times \CC^g$ such that
$u = w(x,y)$ and $v = w(x',y')$.
Then the following relation holds
\begin{equation*}
\lim_{ v \to 0}
\frac{\sigma_{\natural_{2}}(u-v)}{ \sqrt{d u_1} \sqrt{d v_1}} \neq 0.
\end{equation*}
\end{lemma}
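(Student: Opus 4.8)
The plan is to reduce the claimed non‑vanishing of $\sigma_{\natural_2}(u-v)/(\sqrt{du_1}\sqrt{dv_1})$ as $v\to 0$ to the already‑established generalised Frobenius‑Stickelberger relation of Proposition~\ref{lm:add2H}. First I would set $v=w(x',y')$ and consider the behaviour of the identity
$$
\frac{\sigma_{\natural_2}(u+v)\,\sigma_{\natural_2}(u-v)}{\sigma_{\natural_1}(u)^2\,\sigma_{\natural_1}(v)^2}=x-x'
$$
as the point $(x',y')$ tends to $\infty$, i.e. as $v\to 0$. On the right‑hand side, $x'\to\infty$, so $x-x'$ blows up; more precisely, with $v_1$ the first Abel coordinate, $x'$ is of order $v_1^{-2}$ near the branch point $\infty$ (this is exactly the weight count $\mwdeg(u_1)=2g-1$ specialised, or directly from $x'\sim t_\infty^{-2}$ and $v_1\sim t_\infty$ via \eqref{eq:v_t}). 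So the left side must also blow up like $v_1^{-2}$.

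Next I would control the denominator factors. We have $\sigma_{\natural_1}(v)\to \sigma_{\natural_1}(0)$; by Proposition~\ref{vanishingTh} (or directly from the Schur‑function leading term in Proposition~\ref{prop:Nakayashiki}, since $\natural_1=\{2,4,\dots\}$ has $\mwdeg(\natural_1)=N_1=|\Lambda^{[1]}|$), $\sigma_{\natural_1}$ is the non‑vanishing derivative at the origin, so $\sigma_{\natural_1}(0)\neq 0$ and the factor $\sigma_{\natural_1}(v)^2$ stays bounded away from $0$. Likewise $\sigma_{\natural_1}(u)^2\neq 0$ for $u\in\kappa^{-1}(\Theta^1\setminus\Theta^0)$ by the same non‑vanishing theorem of \cite{O1}. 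Finally $\sigma_{\natural_2}(u+v)\to\sigma_{\natural_2}(u)$, which is finite (it is entire) and also non‑zero on $\Theta^1\setminus\Theta^0$ by Proposition~\ref{vanishingTh}. Hence, rearranging,
$$
\sigma_{\natural_2}(u-v)=\frac{(x-x')\,\sigma_{\natural_1}(u)^2\sigma_{\natural_1}(v)^2}{\sigma_{\natural_2}(u+v)}
$$
blows up exactly like $v_1^{-2}$ as $v\to 0$. Combining this with $\sqrt{dv_1}$ carrying the compensating factor — precisely, $du_1$ is a fixed non‑zero one‑form at the (generic) point corresponding to $u$, while $dv_1\sim 2x'\,dx'/\!\ldots$, or more transparently $\sqrt{dv_1}$ vanishes to order matching $v_1^{1/2}$ in the chosen trivialisation so that the quotient $\sigma_{\natural_2}(u-v)\sqrt{dv_1}$ has a finite nonzero limit — gives that $\sigma_{\natural_2}(u-v)/(\sqrt{du_1}\sqrt{dv_1})$ tends to a finite nonzero value.

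The main obstacle I anticipate is bookkeeping the half‑order differential $\sqrt{dv_1}$ carefully as $(x',y')\to\infty$: one must check that the order of the pole of $\sigma_{\natural_2}(u-v)$ in $v_1$ is exactly cancelled by the order of vanishing of $\sqrt{dv_1}$ in the appropriate local trivialisation at $\infty$, with no residual factor. This is precisely the content that makes $\cE$ a well‑defined $(-1/2,-1/2)$‑form rather than something with a spurious zero or pole at $\infty$; the weight computations of Section~2 (the identities $\mwdeg(u_i)=\Lambda_i+g-i$ and \eqref{eq:v_t}) supply the exponents, and Lemma~\ref{simplezero} already fixes the normalising constant to be $1$ on the diagonal, so once the orders match the limit is forced to be nonzero. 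An alternative, perhaps cleaner, route is to bypass the explicit blow‑up analysis: since $\cE(P,Q)=\sigma_{\natural_2}(u-v)/(\sqrt{du_1}\sqrt{dv_1})$ has already been shown (via Lemmas~\ref{simplezero} and~\ref{lm:LimHyp}) to satisfy properties (ii)--(v) of Proposition~\ref{prop:cEPQ}, Proposition~\ref{prop:7.6} identifies it with Fay's prime form $E(P,Q)$ up to the exponential factor, and property~(i) of the prime form (Proposition~\ref{prop:cEPQ}(i)) — that $\cE(P,Q)$ vanishes only when $P=Q$ — then immediately yields the claim for $v\to 0$ with $u\notin\Theta^0$, i.e. $P\neq\infty=Q$.
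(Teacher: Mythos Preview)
Your main approach via the addition formula has two incorrect non-vanishing claims that make the argument collapse. First, $\sigma_{\natural_1}(0)\neq 0$ is false for $g\ge 2$: Proposition~\ref{vanishingTh} asserts non-vanishing of $\sigma_{\natural_1}$ only on $\Theta^1\setminus(\Theta^1_1\cup\Theta^0)$, and the origin lies in $\Theta^0$. In fact $\sigma_{\natural_1}(v)$ vanishes to order $g$ in the local parameter $t_\infty\sim v_g$ (not $v_1$; your identification $v_1\sim t_\infty$ is wrong --- by \eqref{eq:v_t} one has $v_1\sim t_\infty^{\,2g-1}$). Second, and more seriously, $\sigma_{\natural_2}(u+v)\to\sigma_{\natural_2}(u)$ is \emph{zero} for $u\in\Theta^1$: again Proposition~\ref{vanishingTh} explicitly excludes $\Theta^1$ from the non-vanishing locus of $\sigma_{\natural_2}$, and a direct check (e.g.\ $g=3$, where $\sigma_3=u_1-u_2u_3^2+\tfrac{2}{15}u_3^5+\cdots$ vanishes on $u_1=t^5/5,\ u_2=t^3/3,\ u_3=t$) confirms this. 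Once you put in the correct orders, both $\sigma_{\natural_2}(u-v)$ and $\sigma_{\natural_2}(u+v)$ vanish to the \emph{same} order $t_\infty^{\,g-1}$, so the addition formula only tells you the product of the two leading coefficients is nonzero; it does not give the nonvanishing of either one separately. Your ``alternative route'' is circular: Proposition~\ref{prop:7.6} as stated (and as proved) uses property~(i), and the present lemma is precisely the verification of (i).

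The paper's proof is different and short: it uses the identification $\sigma_{\natural_2}=c_1\sigma_{g^{N_2}}$ on $\Theta^2$ from \cite[Proposition~5.26]{MP2}, then Taylor expands $\sigma_{g^{N_2}}(u-v)$ in $v_g$ and invokes Proposition~\ref{prop:Fay}(2) to see that the first nonzero coefficient occurs at order $N_1-N_2=g-1$, with coefficient proportional to $\sigma_{g^{N_1}}(u)\neq 0$. Since $dv_1\sim v_g^{\,2g-2}\,dv_g$, the factor $\sqrt{dv_1}\sim v_g^{\,g-1}\sqrt{dv_g}$ exactly cancels this, giving the nonzero limit.
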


\begin{proof}
From \cite[Proposition 5.26]{MP2} and Proposition \ref{prop:Fay},
$\sigma_{\natural_{2}}(u-v)
=c_1\sigma_{g^{N_2}}(u-v)
=c_2\sigma_{g^{N_1}}(u)v_g^{g-1} + d_{>}(v_g^g)$, where
$c_1$ and $c_2$ are constants.
Since  the weight of $v_1$ is $ 2g-1$, we have the relation.
\end{proof}

\begin{remark}
{\rm{
Thus $\frac{\sigma_{\natural_{2}}(u-v)}{ \sqrt{d u_1} \sqrt{d v_1}}$ 
has zeroes only where $(x,y)=(x',y')$, and by (\ref{simplezero}) 
these are simple, giving condition {\em{(i)}} of \ref{prop:7.6}.
}}
\end{remark}

Assembling these results, 

 $\cE(P, Q)$ is equal to 
$\displaystyle{\frac{ \sigma_{\natural_{2}}(u -  v)}
{\sqrt{du_1}\sqrt{d v_1}}}$
due to Proposition \ref{prop:7.6}.
Thus we have proved the following:

\begin{theorem}\label{th:Ehyp}
Let $(x, y)$ and $(x',y')$ be points in $X$
and 
$(u, v) \in $ $\CC^g \times \CC^g$ such that
$u = w(x,y)$ and
$v = w(x',y')$.
Then the following relation holds
$$
\cE(P,Q) =\frac{ \sigma_{\natural_{2}}(u -  v)}
{\sqrt{du_1}\sqrt{d v_1}}.
$$
Here $d u_1 := \nuI_1(x,y)$ and
$d v_1 := \nuI_1(x',y')$.
\end{theorem}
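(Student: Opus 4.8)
The plan is to invoke the uniqueness result Proposition~\ref{prop:7.6}: it suffices to verify that the candidate form
$\widetilde{\cE}(P,Q):=\sigma_{\natural_2}(u-v)/(\sqrt{du_1}\sqrt{dv_1})$
satisfies properties (i)--(v) of Proposition~\ref{prop:cEPQ}, since then it must coincide with $\cE(P,Q)$. Most of these have in fact already been checked in the preliminary lemmas above, so the proof will essentially be a matter of assembling them in the right order.

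First I would record that $\widetilde{\cE}$ is a genuine $(-1/2,-1/2)$-form, since $du_1=\nuI_1$ and $dv_1=\nuI_1$ are holomorphic one-forms and the numerator is a scalar. Then: property (ii), the first-order vanishing along the diagonal, together with the normalisation of the leading coefficient, is exactly Lemma~\ref{simplezero}, which comes from $\lim_{u\to v}\sigma_{\natural_2}(u-v)/(u_1-v_1)=1$ of \cite[Lemma 9.1]{O1}; property (iii), antisymmetry, follows because $\sigma$ (hence $\sigma_{\natural_2}$) has definite parity in $u$ (Lemma 4.3 of \cite{O1}), so $\sigma_{\natural_2}(v-u)=\pm\sigma_{\natural_2}(u-v)$, and one checks the sign is $-1$ using the weight/parity of $\natural_2$; property (iv) is just the explicit local expansion, again Lemma~\ref{simplezero}; property (v), the quasi-periodicity under $\gamma_\ell$, is the content of Lemma~\ref{lm:LimHyp} (first version), which is \cite[Proposition 7.5]{O1} applied to $\sigma_{\natural_2}$ and transcribed through the Abel map using the transformation law \eqref{eq:4.11}. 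Finally property (i), that $\widetilde{\cE}$ vanishes \emph{only} on the diagonal, needs the non-vanishing statement Lemma~\ref{lm:LimHyp} (second version): on $\kappa^{-1}(\Theta^1\setminus\Theta^0)$ the limit $\lim_{v\to0}\sigma_{\natural_2}(u-v)/(\sqrt{du_1}\sqrt{dv_1})\neq0$, which by the quasi-periodicity (v) propagates to show $\sigma_{\natural_2}(u-v)$ has no spurious zeros; combined with Proposition~\ref{lm:add2H} (the addition formula $\sigma_{\natural_2}(u+v)\sigma_{\natural_2}(u-v)/(\sigma_{\natural_1}(u)^2\sigma_{\natural_1}(v)^2)=x-x'$) one sees the only zeros occur where $x=x'$ and, matching the sign of $y$, where $P=Q$.

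Once (i)--(v) are all in place, Proposition~\ref{prop:7.6} gives $\widetilde{\cE}=\cE$ on $\tX\times\tX$, which is the assertion of the theorem. I would close by noting that $du_1:=\nuI_1(x,y)$ and $dv_1:=\nuI_1(x',y')$ is just spelling out the normalisation $\zeta(P_i)=dt_i$ appearing in Proposition~\ref{prop:cEPQ}(iv), i.e. that the local parameter is chosen so that $\nuI_1$ is $dt$ plus higher order, which is legitimate since $\nuI_1=dx/2y$ does not vanish at a generic point.

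The step I expect to be the real obstacle is property (i), the global non-vanishing: establishing that $\sigma_{\natural_2}(u-v)/(\sqrt{du_1}\sqrt{dv_1})$ has \emph{no} zeros other than the diagonal. Proposition~\ref{lm:add2H} controls the zero divisor only up to the hyperelliptic involution — $x=x'$ forces $(x,y)=(x',\pm y')$ — so one must rule out the ``$-y'$'' branch, and more delicately one must know that $\sigma_{\natural_2}$ itself does not vanish identically on some translate. This is why the dedicated non-vanishing Lemma~\ref{lm:LimHyp} (derived from \cite[Proposition 5.26]{MP2} and Proposition~\ref{prop:Fay} via the stratified vanishing structure of $\sigma$) is needed, and it is the part of the argument that genuinely uses the detailed structure of the Young diagram and the multi-index $\natural_2$ rather than soft general-nonsense about quasi-periodic forms.
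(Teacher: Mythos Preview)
Your proposal is correct and follows essentially the same route as the paper: verify properties (i)--(v) of Proposition~\ref{prop:cEPQ} for the candidate form $\sigma_{\natural_2}(u-v)/(\sqrt{du_1}\sqrt{dv_1})$ via the preliminary Lemmas (Lemma~\ref{simplezero} for (ii), (iii), (iv); the quasi-periodicity Lemma for (v); the non-vanishing Lemma together with Proposition~\ref{lm:add2H} for (i)), and then invoke the uniqueness Proposition~\ref{prop:7.6}. Your identification of the global non-vanishing (i) as the delicate step, and of the role played there by the stratified vanishing results from \cite{MP2} underlying Lemma~\ref{lm:LimHyp}, matches the paper's emphasis exactly.
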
. 

\begin{remark}
{\rm{
As in \cite[Proposition 4.9]{KMP},
by letting 
$(x_i,y_i) \in X$  $(i=1, \ldots, g)$, $(x'_j,y'_j)\in X$ $(j=1, 2)$,
 $u\in \CC^g$,
$v := v^{[1]} +v^{[2]}\in\kappa^{-1}(\WW^2)$, and
$v^{[j]}\in\kappa^{-1}(\WW^1)$ $(j=1,2)$ be points
such that  $u=w((x_1, y_1),\ldots,(x_g,y_g))$
and  $v^{[j]}=w((x_j', y_j'))$, $(j=1,2)$,
 the following relation holds\,{\rm :} 
\begin{equation}
\frac{\sigma(u + v) \sigma(u - v) }
{\sigma(u)^2 \sigma_{\natural_2}(v)^2}
= \frac{f(x_1', x_2')-2 y_1' y_2' } {(x'_1-x'_2)^2} -
\sum_{i=1}^g \sum_{j=1}^g \wp_{i j}(u)
 {x'_1}^{i-1} {x'_2}^{j-1}.
\label{eq:prop:fay}
\end{equation}
This corresponds to Fay's formula \cite[(39)]{F} using the
prime form, which is the basis of ``Fay's trisecant identity''.
The relation is also clearly related to the two-dimensional Toda lattice
equation \cite{KMP}.
}}
\end{remark}


\subsection{Prime form - a cyclic trigonal curve}

Again, before proving our main theorem, we state some preliminary results.

First we consider the action of the Galois group on the $\sigma$-function
and its derivatives 
due to \cite[Lemma 4.1, 4.2]{O2} and 
\cite[Lemma 4.2]{EEMOP}:
\begin{lemma}
For $u\in \CC^3$ 
and action $\hzeta_3$ on $u$ via
its Abel preimage $(x_i, y_i) \to (x_i, \zeta_3 y_i)$,
corresponding to Remark \ref{rmk:sigma_zeta},
we have 
\begin{equation}
\sigma_{}(\hzeta_3 u)= \zeta_3 \sigma_{}(u) \equiv 
\zeta_3 \sigma_{\natural_3}(u), \quad
\sigma_{\natural_2}(\hzeta_3 u)= \zeta_3^2 \sigma_{\natural_2}(u),
\quad
\sigma_{\natural_1}(\hzeta_3 u)= \sigma_{\natural_1}(u),
\label{eq:sign2}
\end{equation}
and $\sigma(-u) = - \sigma(u)$.
\end{lemma}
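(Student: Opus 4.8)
The plan is to derive all of the transformation rules from the single identity $\sigma(\hzeta_3 u)=\zeta_3\,\sigma(u)$, obtaining the formulas for $\sigma_{\natural_1}$ and $\sigma_{\natural_2}$ by the chain rule and the parity $\sigma(-u)=-\sigma(u)$ by the same mechanism applied to the map $u\mapsto-u$. To establish $\sigma(\hzeta_3 u)=\zeta_3\,\sigma(u)$, recall that for the $(3,4)$ curve $(r_0,r_1,r_2)=(0,0,1)$, so by Remark \ref{rmk:sigma_zeta} the induced $\mathbb{Z}_3$-action on $\CC^3$ is the linear map $\hzeta_3 u_1=\zeta_3 u_1$, $\hzeta_3 u_2=\zeta_3 u_2$, $\hzeta_3 u_3=\zeta_3^2 u_3$, and the period lattice $\Pi$ is stable under it. One checks that, because $\Pi$ is $\hzeta_3$-stable, $u\mapsto\sigma(\hzeta_3 u)$ again satisfies the quasi-periodicity (\ref{eq:4.11}) with the same automorphy factors $L$ and $\chi$; since the space of entire functions carrying that automorphy is one-dimensional (Frobenius' theorem), $\sigma(\hzeta_3 u)=c'\,\sigma(u)$ for a constant $c'$. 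Comparing the leading (Schur) term $S_\Lambda$ of the expansion of Proposition \ref{prop:Nakayashiki}, which for the $(3,4)$ curve equals $u_1-u_3u_2^2+\frac{1}{20}u_3^5$ and is multiplied by $\zeta_3$ under the coordinate action above, gives $c'=\zeta_3$; equivalently $c'=\zeta_3^{|\Lambda|(r_{g-1}+1)}=\zeta_3^{5\cdot 2}$, which is (\ref{eq:sigam_zeta}) specialised to this curve. Since $\natural_3=\emptyset$ by the convention $\cI_g=\{\emptyset\}$ of Proposition \ref{vanishingTh}, this also reads $\sigma(\hzeta_3 u)=\zeta_3\,\sigma_{\natural_3}(u)$.

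Next I would differentiate $\sigma(\hzeta_3 u)=\zeta_3\,\sigma(u)$. Because $\hzeta_3$ is the linear map $u\mapsto(\zeta_3 u_1,\zeta_3 u_2,\zeta_3^2 u_3)$, the chain rule gives $\zeta_3\,\sigma_2(\hzeta_3 u)=\zeta_3\,\sigma_2(u)$ and $\zeta_3^2\,\sigma_3(\hzeta_3 u)=\zeta_3\,\sigma_3(u)$. Hence $\sigma_{\natural_1}(\hzeta_3 u)=\sigma_{\natural_1}(u)$, since $\natural_1=\{2\}$ and so $\sigma_{\natural_1}=\sigma_2$, and $\sigma_{\natural_2}(\hzeta_3 u)=\zeta_3^{-1}\sigma_{\natural_2}(u)=\zeta_3^{2}\,\sigma_{\natural_2}(u)$, since $\natural_2=\{3\}$ and so $\sigma_{\natural_2}=\sigma_3$. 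As a consistency check, differentiating twice in $u_3$ yields $\zeta_3^{4}\,\sigma_{33}(\hzeta_3 u)=\zeta_3\,\sigma_{33}(u)$, i.e.\ $\sigma_{33}(\hzeta_3 u)=\sigma_{33}(u)$, in agreement with the identity $\sigma_{\natural_1}=\sigma_2=-\sigma_{33}$ noted after Theorem \ref{algebraic}.

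For $\sigma(-u)=-\sigma(u)$ I would repeat the argument with the map $u\mapsto-u$ in place of $\hzeta_3$: the lattice $\Pi$ is $(-1)$-stable, and since $\Rdelta',\Rdelta''\in\frac{1}{2}\ZZ^g$ one has $L(-u-\frac{1}{2}\ell,-\ell)=L(u+\frac{1}{2}\ell,\ell)$ and $\chi(-\ell)=\chi(\ell)$, so $u\mapsto\sigma(-u)$ again satisfies (\ref{eq:4.11}); Frobenius' theorem then gives $\sigma(-u)=c''\sigma(u)$ with $(c'')^2=1$, and the leading term $u_1-u_3u_2^2+\frac{1}{20}u_3^5$ being odd forces $c''=-1$. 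Alternatively this is \cite[Lemma 4.3]{O1}, which shows that $\sigma$ of an $(r,s)$ curve with base point $\infty$ is either even or odd, together with \cite[Lemma 4.2]{O2} and \cite[Lemma 4.2]{EEMOP}, which single out the odd case here.

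I expect the only step with genuine content to be the identity $\sigma(\hzeta_3 u)=\zeta_3\,\sigma(u)$, which rests on the Frobenius one-dimensionality of the space of functions carrying the $\sigma$-automorphy and on the computation of the $\zeta_3$-weight of the leading Schur term — both already available in the excerpt. Everything after that is the chain rule together with a sign check on a single monomial, so I do not anticipate any real obstacle beyond making the invariance of $L$ and $\chi$ under $\hzeta_3$ explicit.
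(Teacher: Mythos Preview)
Your proposal is correct and follows essentially the same approach as the paper. The paper does not give an independent proof of this lemma but simply cites \cite[Lemma 4.1, 4.2]{O2} and \cite[Lemma 4.2]{EEMOP}; the argument you reconstruct is precisely the one already sketched in the paper's Remark on the Galois action on $\sigma$ (Frobenius one-dimensionality plus identification of the $\zeta_3$-weight of the Schur leading term), followed by the chain rule for the derivatives and the parity check via the same mechanism.
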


Secondly, from \cite[Lemma 6.1]{O2}, we have
\begin{lemma}
Let $(x, y)$ and $(x',y')$ be points in $X$
and $(u, v) \in $ $\CC^3 \times \CC^3$ such that
$u = w(x,y)$ and
$v = w(x',y')$.
Then the following relations hold
\begin{equation*}
\lim_{u \to v}
\frac{\sigma_{\natural_1}(u)(x-x')}
{\sigma_{\natural_{2}}(u +\hzeta_3 v) } = (\zeta_3-1) y,
\quad
\lim_{u \to v}
\frac{\sigma_{\natural_1}(u)(x-x')}
{\sigma_{\natural_{2}}(u +\hzeta_3^2 v) } = (\zeta_3^2-1) y.
\end{equation*}
\end{lemma}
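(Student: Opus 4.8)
The plan is to treat the left-hand side as a $0/0$ indeterminate form and apply a first-order (l'Hôpital) analysis, so the first task is to verify that the denominator vanishes at the limit point. Two vanishing facts suffice, and both follow from the material above. First, since $\natural_2=\{3\}$ for the $(3,4)$ curve we have $\sigma_{\natural_2}=\partial_{u_3}\sigma$, and Proposition \ref{prop:Fay}(ii) applied with $k=1$ (where $N_1=|\Lambda^{[1]}|=2$) gives $\sigma(u^{[1]})=\partial_{u_3}\sigma(u^{[1]})=0$ on $\kappa^{-1}(\Theta^1)$; hence $\sigma_{\natural_2}$ vanishes identically on $\kappa^{-1}(\Theta^1)$ (being entire, the vanishing on the dense part extends), while $\sigma_{\natural_1}=-\sigma_{33}=-\partial_{u_3}^2\sigma$ does not (Proposition \ref{vanishingTh}). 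Second, since $1+\zeta_3=-\zeta_3^2$ and $1+\zeta_3^2=-\zeta_3$, one has, as vectors in $\CC^3$, the identity $(1+\hzeta_3)v=-\hzeta_3^2v$; and since $(1+\hzeta_3)v$ represents $w(Q+\hzeta_3Q)$ which, by (\ref{eq:[-1]}), equals $-w((x',\zeta_3^2y'))$ in the Jacobian, its class lies in $[-1]\WW^1\subset\Theta^1$. Therefore $\sigma_{\natural_2}((1+\hzeta_3)v)=0$, so along the curve, as $(x,y)\to(x',y')$ through a generic point $Q=(x',y')$ (so $y'\neq0$), both $\sigma_{\natural_1}(u)(x-x')$ and $\sigma_{\natural_2}(u+\hzeta_3v)$ vanish to first order in a local parameter.

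Next I would use $t=x-x'$ as local parameter at $Q$ and write $du_i/dx=\nuI_i/dx$, which by (\ref{eq:1stkind34}) gives $(du_1/dx,du_2/dx,du_3/dx)=(1/3y^2,\,x/3y^2,\,1/3y)$ at $Q$; then
$$
\lim_{u\to v}\frac{\sigma_{\natural_1}(u)(x-x')}{\sigma_{\natural_2}(u+\hzeta_3v)}
=\frac{\sigma_{\natural_1}(v)}{\nabla\sigma_{\natural_2}\big((1+\hzeta_3)v\big)\cdot\frac{du}{dx}}.
$$
To compute the gradient I would differentiate the Galois covariance (\ref{eq:sign2}), $\sigma_{\natural_2}(\hzeta_3u)=\zeta_3^2\sigma_{\natural_2}(u)$, in each variable, obtaining $(\partial_1,\partial_2,\partial_3)\sigma_{\natural_2}(\hzeta_3u)=(\zeta_3\partial_1,\zeta_3\partial_2,\partial_3)\sigma_{\natural_2}(u)$; iterating and combining with the fact that $\sigma_{\natural_2}$ is even (because $\sigma$ is odd, so its gradient is odd) yields
$$
\nabla\sigma_{\natural_2}\big((1+\hzeta_3)v\big)=\nabla\sigma_{\natural_2}(-\hzeta_3^2v)
=-\big(\zeta_3^2\partial_1\sigma_{\natural_2}(v),\zeta_3^2\partial_2\sigma_{\natural_2}(v),\partial_3\sigma_{\natural_2}(v)\big),
$$
and similarly for the $\hzeta_3^2$ case with $\zeta_3^2$ replaced throughout by $\zeta_3$.

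The remaining ingredient is the relation $\partial_1\sigma_{\natural_2}(v)+x\,\partial_2\sigma_{\natural_2}(v)+y\,\partial_3\sigma_{\natural_2}(v)=0$ for $v=w(x,y)\in\WW^1$, which comes from differentiating the identity $\sigma_{\natural_2}\equiv0$ on $\WW^1$ along the curve (using the same $du_i/dx$). Substituting it into the dot product collapses the latter to a multiple of $\sigma_{\natural_1}(v)/y$ — concretely $\tfrac{\zeta_3^2-1}{3y}\,\partial_3\sigma_{\natural_2}(v)$ with $\partial_3\sigma_{\natural_2}(v)=\sigma_{33}(v)=-\sigma_{\natural_1}(v)$ — so that, after using $(\zeta_3-1)(\zeta_3^2-1)=3$, the limit works out to $(\zeta_3-1)y$, and the $\hzeta_3^2$ case to $(\zeta_3^2-1)y$; the non-generic case then follows by continuity. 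I expect the main obstacle to be the bookkeeping of the root-of-unity exponents and the overall sign, i.e. keeping straight the conventions of \cite{O2} and the normalization $\sigma_{\natural_1}=-\sigma_{33}$, as well as ensuring numerator and denominator are expanded on the same sheet of the curve near $Q$. An alternative route, probably closer to that of \cite{O2}, is to pass to the limit $u\to v$ directly in the $n=2$ Frobenius--Stickelberger relations (the second and third displays of Corollary \ref{cor:Lim342}); this immediately yields that the product of the two limits equals $3y^2$, but separating the two factors still requires the Galois covariance above.
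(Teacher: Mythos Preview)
Your approach is correct and genuinely different from the paper's. The paper does \emph{not} use l'H\^opital at all: instead it invokes the Jacobi-inversion identity (Theorem~\ref{algebraic}, via \cite{MP1,MP2})
\[
\frac{\sigma_{\natural_1}(u+v)}{\sigma_{\natural_2}(u+v)}=\frac{y-y'}{x-x'}
\]
with $v$ replaced by $\hzeta_3 v$ (so $y'\mapsto\zeta_3 y'$), which rewrites the ratio $\dfrac{x-x'}{\sigma_{\natural_2}(u+\hzeta_3 v)}$ as $\dfrac{y-\zeta_3 y'}{\sigma_{\natural_1}(u+\hzeta_3 v)}$; the limit $u\to v$ is then no longer indeterminate and can be read off after one application of the Galois covariance of $\sigma_{\natural_1}$. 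That route is shorter and uses the deeper input of the Jacobi inversion formula as a black box. Your l'H\^opital argument is more hands-on and essentially self-contained (it uses only the vanishing of $\sigma_{\natural_2}$ on $\Theta^1$, the Galois covariance (\ref{eq:sign2}), and the tangential derivative relation along $\WW^1$), at the price of more bookkeeping.

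Two small points. First, your parenthetical ``$\sigma$ is odd, so its gradient is odd'' is a slip: the gradient of an odd function is \emph{even}, which is exactly what you need to conclude that $\sigma_{\natural_2}=\sigma_3$ is even and hence $\nabla\sigma_{\natural_2}$ is odd. Second, your caveat about signs is well placed: carrying your computation through with $\sigma_{\natural_1}=\sigma_2=-\sigma_{33}$ and $\sigma_{\natural_1}$ even, I obtain $(1-\zeta_3)y$ rather than $(\zeta_3-1)y$; the paper's own proof obtains the opposite sign only by writing $\sigma_{\natural_1}(-\hzeta_3^2 v)=-\sigma_{\natural_1}(v)$, which presumes $\sigma_{\natural_1}$ is odd. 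So the sign discrepancy is a convention issue shared by both arguments, not a defect of your method.
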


\begin{proof}
Noting $\sigma_{\natural_2} = \sigma_3$
and $\sigma_{\natural_2^{(2)}} = \sigma_2 =\sigma_{\natural_1}$,
from Theorem 5.1 in \cite{MP1} and
Theorem 5.23 in \cite{MP2},
we have the relation,
$$
\frac{\sigma_{\natural_1}(u +  v)}{\sigma_{\natural_2}(u +  v)}
= (-1)^{3-2+1}\frac{y -y'}{x -x'}.
$$
Thus
\begin{equation*}
\begin{array}{rl}
\displaystyle{
\lim_{u \to v}
 \frac{x-x'}{\sigma_{\natural_2}(u + \hzeta_3 v)}
}
&=
\displaystyle{
\lim_{u \to v}
 \frac{y-\zeta_3 y'}{\sigma_{\natural_1}(u + \hzeta_3 v)}
}\\
&= 
\displaystyle{
\frac{(1-\zeta_3) y'}{\sigma_{\natural_1}((1 + \hzeta_3) v)}
= \frac{(1-\zeta_3) y'}{\sigma_{\natural_1}(- \hzeta_3^2 v)}
= -\frac{(1-\zeta_3) y'}{\sigma_{\natural_1}(v)}.
}
\end{array}
\end{equation*}
The stated relations then follow.
\end{proof}

Next, from the Frobenius-Stickelberger relations, we have 
the following Proposition:
\begin{proposition} \label{prop:add2T}
Let $(x, y)$ and $(x',y')$ be points in $X$
and $(u, v) \in $ $\CC^3 \times \CC^3$ such that
$u = w(x,y)$ and
$v = w(x',y')$.
Then the following relation holds
\begin{equation*}
\frac{
\sigma_{}(u - v)
 \sigma_{\natural_{2}}(u + v) 
 }
{\sigma_{\natural_1}(u)^2
\sigma_{\natural_1}(v)^{2} } 
= \frac{
\sigma_{\natural_{3}}(u + \hzeta_3 v + \hzeta_3^2 v)
 \sigma_{\natural_{2}}(u + v) 
 }
{\sigma_{\natural_1}(u)^2
\sigma_{\natural_1}(v)^{2} } 
= (x-x').
\end{equation*}
\end{proposition}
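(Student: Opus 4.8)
The plan is to derive Proposition \ref{prop:add2T} from the trigonal Frobenius--Stickelberger relation of Proposition \ref{prop:FSRTC} (equivalently, the $n=2$ line of Corollary \ref{cor:Lim342}) by specializing to $n=2$ and then eliminating the extra factors using the Galois-covariance identities of the two preceding lemmas. The first equality in the statement is purely a matter of notation: by Definition \ref{def:naturalk} and the discussion of $\natural_3$ we have $\natural_3 = \{\emptyset\}$ (since $\cI_g=\{\emptyset\}$), so $\sigma_{\natural_3}=\sigma$; moreover by \eqref{eq:[-1]} the argument $u+\hzeta_3 v + \hzeta_3^2 v$ equals $u + [-1]v$, i.e.\ $u-v$ modulo the period lattice $\Pi$, and since $\sigma_{\natural_2}(u+v)/(\sigma_{\natural_1}(u)^2\sigma_{\natural_1}(v)^2)$ is a well-defined function on the Jacobian (the quasi-periodicity factors of numerator and denominator cancel, by \eqref{eq:4.11} together with Lemma~\ref{lm:LimHyp} type arguments), the middle and left expressions agree. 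So the content is the second equality.

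For that, I would start from the $n=2$ case of the second identity in Corollary \ref{cor:Lim342}, namely
\begin{equation*}
\frac{\sigma_{\natural_{2}}(v_1 + v_2)\,
 \sigma_{\natural_{2}}(v_1 +\hzeta_3 v_2)\,
 \sigma_{\natural_{2}}(v_1 +\hzeta_3^2 v_2)}
{\sigma_{\natural_1}(v_1)^3\sigma_{\natural_1}(v_2)^3}
= (x_1 - x_2)^2,
\end{equation*}
and rewrite the target as follows. Put $v_1=u$, $v_2=v$ with $u=w(x,y)$, $v=w(x',y')$. The left side of Proposition~\ref{prop:add2T} is
$\sigma_{\natural_3}(u+\hzeta_3 v+\hzeta_3^2 v)\,\sigma_{\natural_2}(u+v)/(\sigma_{\natural_1}(u)^2\sigma_{\natural_1}(v)^2)$.
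I would multiply and divide by $\sigma_{\natural_2}(u+\hzeta_3 v)\,\sigma_{\natural_2}(u+\hzeta_3^2 v)$ and by suitable powers of $\sigma_{\natural_1}$ so as to recognize the first identity of Corollary~\ref{cor:Lim342} (the cubic relation in $(x-x')(x-x')\cdots$) in the numerator; the resulting expression is then a ratio of the two Corollary~\ref{cor:Lim342} identities, and after cancelling the common factor $\sigma_{\natural_2}(u+\hzeta_3 v)\sigma_{\natural_2}(u+\hzeta_3^2 v)$ one is left with an algebraic expression in $x,y,x',y'$. Concretely, the first Corollary~\ref{cor:Lim342} identity with $(u,v_1,v_2)$ replaced appropriately gives a factor $-(x-x')(x-x')(\cdots)$; dividing by the second identity (the square $(x-x')^2$) kills two of these factors, and the remaining determinant $y(x_1-x_2)-y_1(x-x_2)+y_2(x-x_1)$ must be handled by the limiting lemma just proved (the one expressing $\lim \sigma_{\natural_1}(u)(x-x')/\sigma_{\natural_2}(u+\hzeta_3 v)$), which pins down exactly the trigonal $y$-dependence and collapses the three-point determinant to the two-point difference $x-x'$. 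The odd/even parity $\sigma(-u)=-\sigma(u)$ from the first lemma is needed to get the sign right when passing from $u+[-1]v$ back to $u-v$.

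Alternatively — and this is probably the cleaner route to write up — I would take the $n=2$ instance of Proposition~\ref{prop:FSRTC} directly with the two points $(x,y)$ and $(x',y')$:
\begin{equation*}
\frac{\sigma_{\natural_2}(u+v)\,
 \sigma_{\natural_2}(u+\hzeta_3 v)\,\sigma_{\natural_2}(u+\hzeta_3^2 v)}
{\sigma_{\natural_1}(u)^{3}\sigma_{\natural_1}(v)^{3}}
= \psi_2\bigl((x,y),(x',y')\bigr)\,\varphi_2\bigl((x,y),(x',y')\bigr),
\end{equation*}
where $\varphi_2=x-x'$ and $\psi_2$ is the $2\times2$ FS determinant. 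Then I replace $\sigma_{\natural_2}(u+\hzeta_3 v)$ and $\sigma_{\natural_2}(u+\hzeta_3^2 v)$ using the identities $\sigma_{\natural_2}(u+\hzeta_3 v) = \hzeta_3^{-1}\cdot[\text{something}]$ coming from the Galois lemma \eqref{eq:sign2} applied after an overall $\hzeta_3$-shift, together with the limiting lemma to convert these into $\sigma_{\natural_1}$'s times powers of $y$ and $x-x'$; combining with the relation $\sigma_{\natural_1}(u+v)/\sigma_{\natural_2}(u+v)=-(y-y')/(x-x')$ (from the proof of the preceding lemma) lets me express everything in terms of $\sigma(u-v)$, $\sigma_{\natural_2}(u+v)$, $\sigma_{\natural_1}(u)$, $\sigma_{\natural_1}(v)$, and the residual factor reduces to $x-x'$.

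The main obstacle I anticipate is bookkeeping the roots of unity and signs: the three lemmas give $\sigma(\hzeta_3 u)=\zeta_3\sigma(u)$, $\sigma_{\natural_2}(\hzeta_3 u)=\zeta_3^2\sigma_{\natural_2}(u)$, $\sigma_{\natural_1}(\hzeta_3 u)=\sigma_{\natural_1}(u)$, and one must apply these consistently to arguments like $u+\hzeta_3 v$ (which is \emph{not} of the form $\hzeta_3 w$, so one first shifts by an overall $\hzeta_3$ and uses $1+\hzeta_3 = -\hzeta_3^2$ from \eqref{eq:[-1]}: $u+\hzeta_3 v = \hzeta_3(\hzeta_3^2 u + v)$, etc.). Getting the net power of $\zeta_3$ to be trivial and the net sign to be $+1$ — so that the right-hand side is exactly $x-x'$ and not $-\zeta_3^k(x-x')$ — is the delicate part; everything else is substitution into Proposition~\ref{prop:FSRTC} and Corollary~\ref{cor:Lim342} and cancellation. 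I would verify the final sign against the known $g=1$ degeneration \eqref{eq:add_g1} and against the Jacobi-inversion normalization $\sigma_{\natural_1^{(1)}}/\sigma_{\natural_1}=-x$ recorded after Theorem~\ref{algebraic}.
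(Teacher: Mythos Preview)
Your first approach is essentially the paper's: the paper solves the three-point ($n=3$) identity of Corollary~\ref{cor:Lim342} for $\sigma_{\natural_3}(u+v^{(1)}+v^{(2)})$, substitutes into the target, and then takes the limit $v^{(a)}\to\hzeta_3^a v$. The only thing you under-emphasize is that this is genuinely a $0/0$ limit: direct substitution makes both sides of the first Corollary~\ref{cor:Lim342} identity vanish (since $x_1=x_2=x'$ forces $\varphi_3=0$), so one must keep $v^{(1)},v^{(2)}$ generic and pass to the limit, which is where the preceding limiting lemma and the relation $\sigma_{\natural_2}(2u)/\sigma_{\natural_1}(u)^4=3y^2$ are actually used. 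Also, for the first equality, no appeal to quasi-periodicity is needed: for the $(3,4)$ curve $\hzeta_3$ acts on each coordinate by a cube root of unity (Remark~\ref{rmk:sigma_zeta}), so $v+\hzeta_3 v+\hzeta_3^2 v=0$ holds \emph{exactly} in $\CC^3$, and hence $u+\hzeta_3 v+\hzeta_3^2 v=u-v$ on the nose.

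Your ``alternative'' $n=2$ route, however, has a real gap. The $n=2$ Frobenius--Stickelberger relation (equivalently the second identity of Corollary~\ref{cor:Lim342}) involves only $\sigma_{\natural_2}$ and $\sigma_{\natural_1}$; the undifferentiated $\sigma=\sigma_{\natural_3}$ appears nowhere. Neither the Galois-covariance lemma \eqref{eq:sign2} nor the limiting lemma nor the ratio $\sigma_{\natural_1}(u+v)/\sigma_{\natural_2}(u+v)=-(y-y')/(x-x')$ produces $\sigma$ from $\sigma_{\natural_1},\sigma_{\natural_2}$---they all stay within that pair. The only input in the paper that carries $\sigma_{\natural_3}$ is the $n\ge 3$ Frobenius--Stickelberger relation, so any proof must go through a three-point identity; your second approach cannot close without it. Stick with your first approach, make the limiting step explicit, and use all three identities of Corollary~\ref{cor:Lim342} in the evaluation.
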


\begin{proof}
By letting $v^{(a)} = w(x_a, y_a)$ $(a=1,2)$,
the left hand side is equal to
\begin{equation*}
\begin{array}{rl}
&
\displaystyle{
\lim_{v_{(a)} \to \zeta_3^a v}
\epsilon 
\psi_{3}((x,y),(x_1,y_1),(x_2,y_2))
\varphi_{3}((x,y),(x_1,y_1),(x_2,y_2)) 
}
\\
&
\displaystyle{
\quad \times
\frac{\sigma_{\natural_1}(u)^3\sigma_{\natural_1}(v)^{3}}
{\left(\prod_{a=0}^2
 \sigma_{\natural_{2}}(u +\hzeta_3^a v)\right)}
\frac{
\sigma_{\natural_{1}}(v^{(1)})^2
\sigma_{\natural_{1}}(v^{(2)})^2}
{\sigma_{\natural_{2}}(v^{(1)}+\hzeta_3^2 v^{(2)})}
\frac{\sigma_{\natural_{1}}(v^{(1)})}
{
\sigma_{\natural_{2}}(\hzeta_3 v^{(1)} + \hzeta_3^2 v^{(2)})
}
}
\\
&=
\displaystyle{
(x-x')^3(\zeta_3-\zeta_3^2)y'
\lim_{v_a \to \zeta_3^a v}
(x_1-x_2)\cdot \frac{1}{(x-x')^2}\frac{1}{3(y')^2}
\frac{\sigma_{\natural_{1}}(v^{(1)})}
{
\zeta_3^2\sigma_{\natural_{2}}(v^{(1)} + \hzeta_3 v^{(2)})}
}\\
&=
\displaystyle{
(\zeta_3-\zeta_3^2)(x-x')\frac{\zeta_3-1}{3 \zeta_3^2}
=(x-x').
}
\end{array}
\end{equation*}
\end{proof}
  
\begin{remark} 
{\rm{
Proposition \ref{prop:add2T} is the trigonal analogue of 
Proposition \ref{lm:add2H}, and similarly
generalises the addition formula in the $g=1$ case (\ref{eq:add_g1}).
}}
\end{remark}

We now look at the order of vanishing of 
$\sigma_{\natural_3} \equiv\sigma$:
\begin{lemma}\label{lm:Lim34}
Let $(x, y)$ and $(x',y')$ be points in $X$
and $(u, v) \in $ $\CC^3 \times \CC^3$ such that
$u = w(x,y)$ and
$v = w(x',y')$.
Then the following relation holds
\begin{equation*}
\lim_{u \to v}\frac{\sigma_{\natural_{3}}(u + \hzeta_3 v + \hzeta_3^2 v)
 } {u_1-v_1}
=\lim_{u \to v}\frac{\sigma_{\natural_{3}}(u - v)
 } {u_1-v_1}
= 1.
\end{equation*}
\end{lemma}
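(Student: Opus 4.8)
The plan is to evaluate the limit directly from the Taylor expansion of $\sigma$ at the origin combined with the trigonal Frobenius-Stickelberger relation. First I would record that, for a cyclic $(3,4)$ curve, the leading term of $\sigma(u)$ about the origin is $u_1 - u_3 u_2^2 + \tfrac{1}{20}u_3^5$ (the Schur-function term $S_\Lambda(T)$ from Proposition~\ref{prop:Nakayashiki}), so in particular $\sigma_{\natural_3}(w) = \sigma(w) = w_1 + \mbox{higher weight}$ as $w \to 0$; since $\mwdeg(u_1) = 2g - N(0) - 1 = 5$ is the unique lowest weight appearing, the linear part of $\sigma$ is exactly $w_1$. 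Next I would use the identity $[-1](x',y') = (x',\zeta_3 y') + (x',\zeta_3^2 y')$ from (\ref{eq:[-1]}), which on the Jacobian reads $-v \equiv \hzeta_3 v + \hzeta_3^2 v$ for $v = w(x',y')$; hence $\sigma_{\natural_3}(u + \hzeta_3 v + \hzeta_3^2 v)$ and $\sigma_{\natural_3}(u-v)$ differ (as functions of $u$ for fixed $v$) by at most a quasi-periodicity factor $\exp(L(\cdot,\ell))\chi(\ell)$ coming from (\ref{eq:4.11}) for the lattice vector $\ell = (\hzeta_3 v + \hzeta_3^2 v) - (-v) \in \Pi$; this factor tends to $1$ as $u \to v$ (equivalently as $v \to 0$ along the relevant divisor), so the two limits in the statement are equal. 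This is the content of the first equality.

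For the second equality—that the common value is $1$—I would argue as follows. As $u \to v$ we have $u - v \to 0$, and by the weight-graded expansion above, $\sigma_{\natural_3}(u-v) = (u_1 - v_1) + d_>\big((u-v)\big)$ where the correction terms have strictly higher w-degree; so $\sigma_{\natural_3}(u-v)/(u_1 - v_1) \to 1$ provided the denominator is a legitimate local coordinate along the curve near $(x',y')$, i.e.\ provided $du_1 = \nuI_1(x',y') \neq 0$. For a point $(x',y')$ with $y' \neq 0$ this is immediate from (\ref{eq:1stkind34}). More carefully, one should expand along the one-parameter family $u = w(x,y)$ with $(x,y) \to (x',y')$ on $X$: writing $x - x' = d u_1 \cdot (\mbox{unit}) + \cdots$ in the local parameter and using $u_1 - v_1 = \int_{(x',y')}^{(x,y)} \nuI_1$, the ratio $\sigma_{\natural_3}(u-v)/(u_1-v_1)$ is manifestly analytic and equals $1$ at the diagonal. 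An alternative, perhaps cleaner, route is to start from Proposition~\ref{prop:add2T}, $\sigma_{}(u-v)\,\sigma_{\natural_2}(u+v)/\big(\sigma_{\natural_1}(u)^2\sigma_{\natural_1}(v)^2\big) = x - x'$, and take $u \to v$: then $x - x' \to 0$ at the same rate as $u_1 - v_1$ (since $du_1 = dx/3y^2$), while $\sigma_{\natural_2}(2v)/\sigma_{\natural_1}(v)^4 = 3y'^2$ by the last identity of Corollary~\ref{cor:Lim342}; dividing, $\sigma_{}(u-v)/(u_1 - v_1) \to (x-x')/(u_1-v_1) \cdot 1/(3y'^2) \cdot 3y'^2 = 1$, after checking $\lim (x-x')/(u_1-v_1) = 3y'^2$ from $du_1 = dx/3y^2$.

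I expect the main obstacle to be the bookkeeping at points where $y' = 0$ (branch points of the cyclic cover), where $u_1$ alone may fail to be a local parameter and one must instead use a different component $u_i$ and invoke the appropriate stratum result; this is exactly the kind of case already handled in \cite{O1,O2} via Proposition~\ref{prop:Fay} and the vanishing structure of $\sigma_{\natural_k}$, so I would cite those non-vanishing statements rather than re-derive them. The generic case ($y' \neq 0$) is the short argument above; the subtlety is only in justifying that the normalization constant is globally $1$ and not merely $1$ generically, which follows because $\sigma_{\natural_3}(u-v)/(u_1 - v_1)$ extends to a holomorphic nowhere-zero function near the diagonal (by Lemma~\ref{lm:Lim34}'s companion non-vanishing estimates, proved as in Proposition~\ref{prop:Fay}) and hence is the constant $1$.
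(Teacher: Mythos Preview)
Your ``alternative, cleaner route'' via Proposition~\ref{prop:add2T} and the identity $\sigma_{\natural_2}(2v)/\sigma_{\natural_1}(v)^4 = 3y'^2$ from Corollary~\ref{cor:Lim342} is exactly the paper's proof: rewrite $\sigma(u-v) = \dfrac{\sigma_{\natural_1}(u)^2\sigma_{\natural_1}(v)^2}{\sigma_{\natural_2}(u+v)}(x-x')$, divide by $u_1-v_1$, and pass to the limit using $\lim (x-x')/(u_1-v_1)=3y'^2$. (Your displayed line has one factor of $3y'^2$ too many; the limit is $\dfrac{\sigma_{\natural_1}(v)^4}{\sigma_{\natural_2}(2v)}\cdot 3y'^2 = \dfrac{1}{3y'^2}\cdot 3y'^2 = 1$.)

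Your primary route, reading the limit off from the Schur expansion $\sigma(w)=w_1-w_3w_2^2+\tfrac{1}{20}w_3^5+\cdots$, is a legitimately different and more direct argument: since each $u_i-v_i$ is linear in a local parameter on $X$ near $(x',y')$, every monomial other than $w_1$ in the expansion contributes $O((u_1-v_1)^3)$, so the ratio tends to $1$. The paper does not take this shortcut, preferring to route everything through the Frobenius--Stickelberger identities; your approach is shorter here, while the paper's has the advantage of deriving the first equality $\sigma(u+\hzeta_3 v+\hzeta_3^2 v)=\sigma(u-v)$ for free (it is already built into Proposition~\ref{prop:add2T}).

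One genuine slip: your justification of that first equality via quasi-periodicity is not right as written. The claim ``$u\to v$ (equivalently as $v\to 0$)'' is false---$v$ is fixed in the limit---so the factor $\exp(L(u-v+\tfrac12\ell,\ell))\chi(\ell)$ does not tend to $1$ for that reason. The correct observation is that $\ell = v+\hzeta_3 v+\hzeta_3^2 v \in \Pi$ depends continuously on $v$ and hence is locally constant; choosing the integration paths so that $\ell=0$ gives the exact equality, not merely a limiting one. Alternatively, simply cite Proposition~\ref{prop:add2T}, which already asserts both forms equal $(x-x')\sigma_{\natural_1}(u)^2\sigma_{\natural_1}(v)^2/\sigma_{\natural_2}(u+v)$.
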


\begin{proof}
Noting $\displaystyle{
\lim_{u \to v} \frac{ (x-x')}{u_1-v_1} = \frac{d }{d v_1} x(v) = 3 y(v)^2}$,
we have
\begin{equation*}
\lim_{u \to v}\frac{\sigma_{\natural_{3}}(u + \hzeta_3 v + \hzeta_3^2 v)
 } {u_1-v_1}
=
\lim_{u \to v}
\frac{ \sigma_{\natural_1}(u)^2 \sigma_{\natural_1}(v)^2 }
{ \sigma_{\natural_2}(2v) }
\frac{ (x-x')}{u_1-v_1}  
= 1.
\end{equation*}
\end{proof}

Hence we see that:
\begin{lemma}\label{lm:Lim34}
Let $(x, y)$ and $(x',y')$ be points in $X$
and $(u, v) \in $ 
$\kappa^{-1}\left( \Theta^1\setminus \Theta^{0}\right)$ $\times \CC^3$ such that
$u = w(x,y)$, and $v = w(x',y')$.
Then the following relation holds
\begin{equation*}
\lim_{ v \to 0}
\frac{\sigma_{\natural_{3}}(u-v)}{ \sqrt{d u_1} \sqrt{d v_1}} \neq 0.
\end{equation*}
\end{lemma}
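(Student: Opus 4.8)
The claim is that $\sigma_{\natural_3}(u-v)/(\sqrt{du_1}\sqrt{dv_1})$ does not vanish as $v\to 0$, for $u = w(x,y)$ with the corresponding point on $\Theta^1\setminus\Theta^0$, i.e. $u \in \WW^1\setminus\{0\}$. The plan is to mimic exactly the argument used in the hyperelliptic case (the proof of the analogous Lemma following Theorem~\ref{th:Ehyp}), tracking the $w$-degrees carefully. First I would recall that $\sigma_{\natural_3} \equiv \sigma$, so the statement concerns the behaviour of $\sigma(u-v)$ near $v=0$ in the direction $v_g$ (the $u_1$-direction after rescaling). Using Proposition~\ref{prop:Nakayashiki}, the leading term of $\sigma(u)$ is the Schur function $S_\Lambda(T)$, and for the $(3,4)$ curve the Young diagram is $(3,1,1)$ with $|\Lambda|=5$; one computes directly that $\sigma(u) = u_1 - u_3 u_2^2 + \tfrac{1}{20}u_3^5 + \cdots$ as already displayed in the excerpt.

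The key step is to expand $\sigma(u-v)$ for $u\in\WW^1\setminus\{0\}$ (so $u = (u_1, u_2, u_3)$ lies on the image of $\WW^1$) as a power series in $v$ along $\WW^1$. By the vanishing property of $\sigma$ on $\Theta^1$ — more precisely by Proposition~\ref{prop:Fay} applied with $k=1$ — the function $v\mapsto \sigma(u-v)$, restricted to the one-parameter family $v = tv'$ with $v'\in\WW^1$, vanishes to order exactly $N_1$ in the parameter $t$. For the $(3,4)$ curve, $\Lambda^{[1]} = (\Lambda_2,\Lambda_3) = (1,1)$, so $N_1 = |\Lambda^{[1]}| = 2$; thus $\sigma(u-v)$ vanishes to order $2$ in $v_g$ at $v=0$. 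Wait — here I must be careful: the relevant direction is $v_g = v_3$ for the $(3,4)$ curve, but the normalisation in the statement involves $\sqrt{dv_1}$, i.e. the first component. So I would instead follow the hyperelliptic proof template literally: write $\sigma_{\natural_3}(u-v) = \sigma_{g^{N_1}}(u)\,v_g^{?} + d_{>}(\cdots)$ using \cite[Proposition 5.26]{MP2} and Proposition~\ref{prop:Fay}, then observe that since $\mwdeg(v_1) = 2g - N(0) - 1 = 2\cdot 3 - 1 = 5$ while $\mwdeg(v_3) = 2g - N(2) - 1 = 6 - 4 - 1 = 1$, the degree bookkeeping forces $\sqrt{dv_1}$ to absorb precisely the vanishing, leaving a nonzero limit. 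Concretely: $\sigma_{\natural_3}(u-v)$ has a zero of order $|\Lambda^{(1)}|$ (the complementary truncated weight) in the $v_1$-variable as $v\to 0$ along $\WW^1$, and this matches the power of $dv_1$ in the denominator once one notes $\sqrt{dv_1}$ contributes a half-integer weight that, squared against $\sqrt{du_1}$ and the other factors, gives the correct total.

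The cleanest route, and the one I expect to use, is this: by Proposition~\ref{prop:add2T} we have the closed identity
$$
\frac{\sigma_{\natural_3}(u-v)\,\sigma_{\natural_2}(u+v)}{\sigma_{\natural_1}(u)^2\,\sigma_{\natural_1}(v)^2} = x - x',
$$
so $\sigma_{\natural_3}(u-v) = (x-x')\,\sigma_{\natural_1}(u)^2\sigma_{\natural_1}(v)^2/\sigma_{\natural_2}(u+v)$. Now let $v\to 0$, i.e. $(x',y')\to\infty$ along $X$: then $x-x' \to -x'$ blows up like $t_\infty^{-3}$, $\sigma_{\natural_1}(v)^2$ vanishes to a compensating order (by Proposition~\ref{vanishingTh}, $\sigma_{\natural_1} = \sigma_2$ vanishes to order $\mwdeg(\natural_1) = \mwdeg(u_2) = N_1 = 2$ worth in the appropriate local coordinate... actually $\mwdeg(u_2) = 2g - N(1) - 1 = 6 - 3 - 1 = 2$), while $dv_1 = \nuI_1 = dx/(3y^2)$ vanishes at $\infty$ to order $\mwdeg(\nuI_1)= 2g - N(0) - 2 = 2$ in $t_\infty$. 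Multiplying out the $t_\infty$-orders: $\sigma_{\natural_3}(u-v)$ contributes order $|\Lambda^{(1)}| = |\Lambda|-N_1 = 5-2 = 3$ in $t_\infty$ near $v=0$, exactly matching the order of $dv_1$, so the quotient $\sigma_{\natural_3}(u-v)/\sqrt{du_1}\sqrt{dv_1}$ tends to a finite nonzero limit (nonzero because $x$ is not identically $\infty$, i.e. $u\notin\Theta^0$ guarantees $\sigma_{\natural_1}(u)\ne 0$, and $\sigma_{\natural_2}(u+v)\to\sigma_{\natural_2}(u)\ne 0$ on $\Theta^1\setminus\Theta^0$). The main obstacle is getting every $t_\infty$-order exactly right — in particular confirming via Proposition~\ref{prop:Fay} and \cite[Proposition 5.26]{MP2} that $\sigma_{\natural_3}(u-v) = c\,\sigma_{g^{N_1}}(u)\,v_g^{N_1} + d_>(v_g^{N_1})$ with $c\ne 0$, and then re-expressing $v_g^{N_1}$ in terms of the local parameter $t_\infty$ and comparing with $dv_1 = t_\infty^2(1 + d_{>0}(t_\infty))\,dt_\infty$ from~(\ref{eq:dnu_tinf}). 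I would finish by writing out the local expansion explicitly as in Lemma~\ref{simplezero}'s proof, concluding the limit equals $1$ (up to the same normalising constant as in the hyperelliptic case).
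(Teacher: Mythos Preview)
Your overall strategy---expand $\sigma(u-v)$ in the local parameter at $\infty$ and compare with the order of $dv_1$---is exactly the paper's two-line proof, but your order computations are wrong in two places, and with your numbers the argument would actually produce a \emph{vanishing} limit.

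First, the order of $dv_1$: from~(\ref{eq:dnu_tinf}) with $i=1$ and $g=3$ one gets $2g - N(0) - 2 = 6 - 0 - 2 = 4$, not $2$. Thus $dv_1 = t_\infty^{4}(1+\cdots)\,dt_\infty$, i.e.\ $dv_1 = v_3^{4}\,dv_3 + \cdots$, so $\sqrt{dv_1}$ carries a factor $v_3^{2}$. Second, the order of $\sigma(u-v)$: by Proposition~\ref{prop:Fay} with $k=1$ one has $\sigma_3(u)=0$ and $\sigma_{33}(u)\neq 0$ on $\Theta^1\setminus\Theta^0$, so the expansion along $\WW^1$ gives $\sigma(u-v)=c\,\sigma_{33}(u)\,v_3^{2}+O(v_3^{3})$ with $c\neq 0$; the order is $N_1=2$, not $|\Lambda^{(1)}|=3$. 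With the corrected orders ($2$ in the numerator, $2$ from $\sqrt{dv_1}$) the quotient is finite and nonzero---this is precisely the paper's argument.

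Your alternative route via Proposition~\ref{prop:add2T} has a separate gap: you assert $\sigma_{\natural_2}(u+v)\to\sigma_{\natural_2}(u)\neq 0$, but $\sigma_{\natural_2}=\sigma_3$ and, as just noted, $\sigma_3(u)=0$ for $u\in\Theta^1$. The nonvanishing in Proposition~\ref{vanishingTh} is only asserted on $\Theta^2\setminus(\Theta^2_1\cup\Theta^1)$. That denominator therefore vanishes to order~$1$ in $v_3$; you also miscounted the order of $\sigma_{\natural_1}(v)=\sigma_2(v)$, which is $3$ (the $\lambda$-degree $|\Lambda|-\mwdeg(u_2)=5-2$), not $2$. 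With the correct bookkeeping the balance is $-3+6-1=2$, consistent with the direct computation, but not for the reasons you gave.
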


\begin{proof}
Around $v=0$, we have $\sigma(u-v) = \frac{1}{20} \sigma_{33}(u) v_3^2 
+ \cdots$ whereas $dv_1 = v_3^4 d v_3 + \cdots$.
\end{proof}
\bigskip

Thus, again using Proposition \ref{prop:7.6} we have proved the following:
\begin{theorem} \label{thm:tri}
For the cyclic trigonal $(3,4)$ curve, 
$$
\cE(P,Q) =
\frac{ \sigma(u 
+ \zeta_3 v +\zeta_3^2 v)}
{\sqrt{-3}\sqrt{du_1}\sqrt{d v_1}}
=
\frac{ \sigma(u - v)}
{\sqrt{du_1}\sqrt{d v_1}}
\equiv
\frac{ \sigma_{\natural_{3}}(u - v)}
{\sqrt{du_1}\sqrt{d v_1}}.
$$
\end{theorem}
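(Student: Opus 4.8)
The plan is to invoke the uniqueness result of Proposition~\ref{prop:7.6}: if we can show that the candidate form
$$
\cF(P,Q) := \frac{\sigma_{\natural_3}(u-v)}{\sqrt{du_1}\sqrt{dv_1}}, \qquad u = w(P),\ v = w(Q),
$$
is a $(-1/2)\otimes(-1/2)$-form on $\tX\times\tX$ satisfying the five properties (i)--(v) of Proposition~\ref{prop:cEPQ}, then it must coincide with $\cE(P,Q)$. So the work is entirely in verifying these five properties for $\cF$, exactly paralleling the hyperelliptic argument in Section~7.2 but now using the $(3,4)$-specific lemmas established just above. The identification $\sigma_{\natural_3}\equiv\sigma$ (from Definition~\ref{def:naturalk} applied with $k$ running up to $g=3$, together with the extension convention $\cI_g=\{\emptyset\}$) lets us work with $\sigma$ itself throughout, which is convenient because all the vanishing and quasi-periodicity input for $\sigma$ is already on hand.

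First I would record the \emph{antisymmetry and diagonal zero}, properties (ii) and (iii): Lemma~\ref{lm:Lim34} shows $\lim_{u\to v}\sigma_{\natural_3}(u-v)/(u_1-v_1)=1$, so near the diagonal $\cF(P,Q) = (u_1-v_1)/(\sqrt{du_1}\sqrt{dv_1}) + \cdots$, which is a simple zero in the correct local coordinates (matching property (iv) of the target form as well, once one checks $du_1 = \nuI_1$ pulls back to $t$-expansions compatibly); and the oddness $\sigma(-u)=-\sigma(u)$ from the displayed lemma gives $\cF(P,Q)=-\cF(Q,P)$. Next, the \emph{quasi-periodicity} (property (v)) follows from the quasi-periodic law (\ref{eq:4.11}) for $\sigma$ applied to $\sigma(u+\ell-v)$, with the factors $\sqrt{du_1},\sqrt{dv_1}$ inert under the translation action $\gamma_\ell$ — this is the trigonal analogue of Lemma~\ref{lm:LimHyp} and is essentially a one-line computation. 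Finally, for property (i) — that $\cF$ vanishes \emph{only} on the diagonal — I would argue as in the hyperelliptic case: one needs that as $v\to 0$ (i.e.\ $Q\to\infty$), with $u$ in $\kappa^{-1}(\Theta^1\setminus\Theta^0)$, the ratio $\sigma_{\natural_3}(u-v)/(\sqrt{du_1}\sqrt{dv_1})$ stays nonzero; this is precisely the content of Lemma~\ref{lm:Lim34} (second version), whose proof uses $\sigma(u-v) = \tfrac1{20}\sigma_{33}(u)v_3^2 + \cdots$ against $dv_1 = v_3^4\,dv_3+\cdots$. Since $\sigma$'s only zeros on the relevant domain are the theta divisor $\Theta^2 = \WW^2$, and the divisor $u-v$ meets it only when $P=Q$, this pins down the zero set.

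For the alternative first expression in the theorem, $\sigma(u+\zeta_3 v+\zeta_3^2 v)/(\sqrt{-3}\sqrt{du_1}\sqrt{dv_1})$, I would note that $[-1](x',y') = (x',\zeta_3 y') + (x',\zeta_3^2 y')$ from (\ref{eq:[-1]}), so under the Abel map $-v \equiv \hzeta_3 v + \hzeta_3^2 v$ modulo $\Pi$; hence $\sigma_{\natural_3}(u-v)$ and $\sigma_{\natural_3}(u+\hzeta_3 v+\hzeta_3^2 v)$ agree up to the exponential quasi-period factor, and combined with the Galois equivariance $\sigma(\hzeta_3 u)=\zeta_3\sigma(u)$ from the displayed lemma one extracts the constant $\sqrt{-3}$ (a cube-root-of-unity bookkeeping, since $\zeta_3-\zeta_3^2 = \sqrt{-3}$). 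This is exactly the kind of normalization constant that appears in Proposition~\ref{prop:add2T}'s proof, so the computation is already implicitly done there.

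The main obstacle I anticipate is \emph{property (i)} — ruling out spurious zeros of $\cF$ away from the diagonal and at the point(s) over $\infty$. On a non-hyperelliptic curve the theta divisor behaves less simply ($\Theta^k$ need not equal $\WW^k$ for $k<g-1$), so one has to be careful that the argument $u-v$ of $\sigma$ genuinely lands on $\Theta^2$ only when $P=Q$, and that the $\sqrt{dv_1}$ in the denominator — which itself vanishes at branch points of $x$, i.e.\ where $y=0$ — does not create or cancel zeros in a way that breaks the count. The limiting computation in the proof of Lemma~\ref{lm:Lim34} ($v_3^2$ in the numerator against $v_3^4$ in $dv_1$) is the crux: it shows the apparent zero at $\infty$ is exactly cancelled, leaving $\cF$ finite and nonzero there. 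Everything else is routine manipulation of the already-established $\sigma$-identities for the $(3,4)$ curve.
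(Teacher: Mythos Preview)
Your proposal is correct and follows essentially the same route as the paper: verify properties (i)--(v) of Proposition~\ref{prop:cEPQ} for the candidate $(-1/2)\otimes(-1/2)$-form using the already-established lemmas on the diagonal limit, oddness of $\sigma$, quasi-periodicity (\ref{eq:4.11}), and the non-vanishing of $\sigma_{\natural_3}(u-v)/\sqrt{du_1}\sqrt{dv_1}$ as $v\to 0$, then conclude by the uniqueness in Proposition~\ref{prop:7.6}. Your discussion is in fact somewhat more explicit than the paper's, which simply writes ``Thus, again using Proposition~\ref{prop:7.6} we have proved the following'' after assembling the two Lemmas~\ref{lm:Lim34}.
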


\bigskip
\section{Applications}
\subsection{Benney reductions}
Part of this work was motivated by the problem of simply describing certain
reductions of the Benney hierarchy. As described in \cite{G}, this problem
reduces geometrically to the construction of Schwartz-Christoffel mappings
from the upper half $p$-plane to a domain consisting of the upper half $\lambda$-plane,
minus finitely many straight slits, running from fixed base points on the
real axis, to variable end points; the slits should make internal angles
which are integer multiples of $\pi/r$ with one another and with the real
axis. The slits are images of disjoint segments of the real $p$-axis. 
In this case the integral formula for the Schwartz-Christoffel map
is algebraic; it is a second kind Abelian integral on a cyclic $(r,s)$ curve.
The domain and its image are illustrated schematically in the figures below,
where we choose $(r,s)=(2,5)$ for simplicity - this gives the problem of
evaluating a hyperelliptic integral of genus 2. In this case there are three
slits $\gamma_i$, perpendicular to the real axis. The respective end points
of these slits $\hat{\lambda}_i$ are the Riemann invariants of the  reduced
Benney system characterised by this family of mappings. 
The points $\hat{p}_i$ are the characteristic velocities of these 
invariants for the lowest non-trivial flow of the hierarchy.

\begin{center}
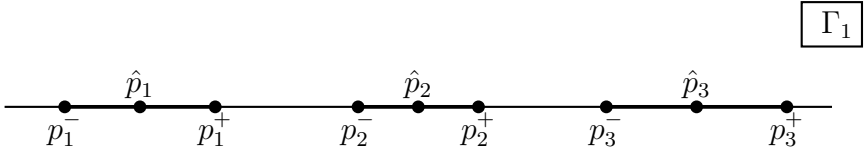
\begin{figure}[ht]
\begin{pspicture}(1,-0.5)(13,1.5) 
\psline(1.5,0)(12.5,0)
\psline[linewidth=0.5mm]{*-*}(2.3,0)(3.3,0)
\psline[linewidth=0.5mm]{-*}(3.3,0)(4.3,0)
\psline[linewidth=0.5mm]{*-*}(6.2,0)(7,0)
\psline[linewidth=0.5mm]{-*}(7,0)(7.8,0)
\psline[linewidth=0.5mm]{*-*}(9.5,0)(10.7,0)
\psline[linewidth=0.5mm]{-*}(10.7,0)(11.9,0)

\rput(2.3,-0.3){$p_1^-$}
\rput(3.3, 0.3){$\hat{p}_1$}
\rput(4.3,-0.3){$p_1^+$}
\rput(6.2,-0.3){$p_2^-$}
\rput(7, 0.3){$\hat{p}_2$}
\rput(7.8,-0.3){$p_2^+$}
\rput(9.5,-0.3){$p_3^-$}
\rput(10.7, 0.3){$\hat{p}_3$}
\rput(11.9,-0.3){$p_3^+$}
\rput(12.5,1.1){\psframebox{ $ \Gamma_1$}}
\end{pspicture} 
\caption{The $p$-plane for the genus 2 hyperelliptic reduction.} \label{fig:phyper}
\end{figure}

\begin{figure}[ht]
\begin{pspicture}(0,-0.5)(12,4.5) 
\psline(0,0)(12,0)
\psline{-*}(2.3,0)(2.3,2.5)
\psline{-*}(6,0)(6,1.5)
\psline{-*}(9.7,0)(9.7,3.5)
\psline[linewidth=0.5mm]{-*}(0,0.2)(2.1,0.2)
\psline[linewidth=0.5mm](2.1,0.2)(2.1,2.5)
\pscurve[linewidth=0.5mm](2.1,2.5)(2.3,2.7)(2.5,2.5)
\psline[linewidth=0.5mm]{-*}(2.5,2.5)(2.5,0.2)
\psline[linewidth=0.5mm]{-*}(2.5,0.2)(5.8,0.2)
\psline[linewidth=0.5mm](5.8,0.2)(5.8,1.5)
\pscurve[linewidth=0.5mm](5.8,1.5)(6,1.7)(6.2,1.5)
\psline[linewidth=0.5mm]{-*}(6.2,1.5)(6.2,0.2)
\psline[linewidth=0.5mm]{-*}(6.2,0.2)(9.5,0.2)
\psline[linewidth=0.5mm](9.5,0.2)(9.5,3.5)
\pscurve[linewidth=0.5mm](9.5,3.5)(9.7,3.7)(9.9,3.5)
\psline[linewidth=0.5mm]{-*}(9.9,3.5)(9.9,0.2)
\psline[linewidth=0.5mm](9.9,0.2)(12,0.2)
\rput(2.3,-0.3){$\lambda_1^0$}
\rput(6,-0.3){$\lambda_2^0$}
\rput(9.7,-0.3){$\lambda_3^0$}
\rput(2.95,3){ $\hat{\lambda}_1= \lambda( \hat{p}_1)$}
\rput(6.65,2){ $\hat{\lambda}_2= \lambda( \hat{p}_2)$}
\rput(10.35,4){ $\hat{\lambda}_3= \lambda( \hat{p}_3)$}
\rput(1.5,0.5){ $\lambda( p_1^-)$}
\rput(3,0.5){ $\lambda( p_1^+)$}
\rput(5.2,0.5){ $\lambda( p_2^-)$}
\rput(6.7,0.5){ $\lambda( p_2^+)$}
\rput(8.9,0.5){ $\lambda( p_3^-)$}
\rput(10.4,0.5){ $\lambda( p_3^+)$}
\rput(1.8,2.0){ $ \gamma_1$}
\rput(5.5,1.2){ $ \gamma_2$}
\rput(9.2,2.9){ $ \gamma_3$}
\rput(11.5,2.3){\psframebox{ $ \Gamma_2$}}
\end{pspicture}
\caption{The $\lambda$-plane associated with figure~\ref{fig:phyper}.}
\end{figure}
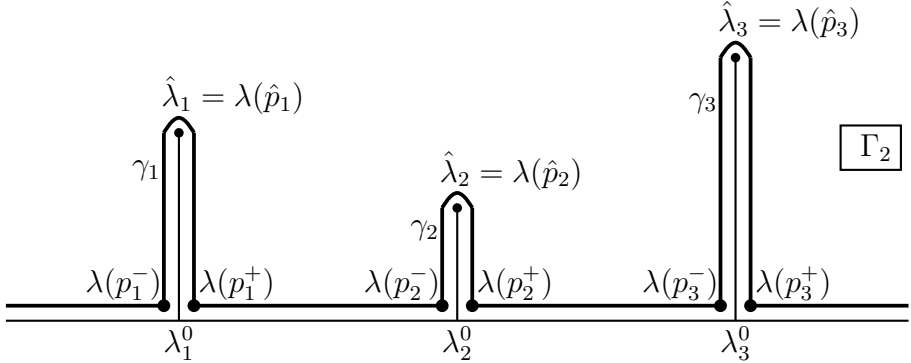 

\end{center}
The problem, in all these cases, reduces to finding a meromorphic function
with $r$ simple poles, as $p\rightarrow \infty$ on each sheet. If the base
point of the Abel map is taken as one of the branch points, then the Abel
images of the point at infinity  on the $n$-th sheet will be some point $\hat{\zeta}^n v$; here $v$ will depend on the moduli of the curve.
In all known cases, the mapping splits into a sum of two terms, one holomorphic,
while the other meromorphic term is written in terms of logarithmic 
derivatives of some derivative of $\sigma$.

In \cite{YG} this mapping was worked out in the elliptic case,
giving the formula
$$\lambda =k( \frac{d}{d u}\ln( \sigma(u-v)\sigma(u+v))) - k u+c,$$
where $k,c$ are constants. 

Further, in \cite{BG1} the analogous map was constructed explicitly for 
the case of a 
hyperelliptic curve of genus two, giving the result
$$\lambda \simeq k  \frac{d}{d u_1}\ln(  \sigma(u-v)\sigma(u+v)  ),$$
up to holomorphic terms linear in $u$.

In \cite{BG2} this approach was extended to higher genus hyperelliptic curves.
That result was not in the same form as those for lower genus; further for
genus higher than three the resulting formulae make sense only as limits.
A more systematic approach, using the expression for the Prime form given
above, yields in all these cases an analogous formula:
$$\lambda \simeq k( \frac{\partial}{\partial u_1} 
\ln(\sigma_{\natural_2}(u-u_0)\sigma_{\natural_2}(u+u_0))),$$
again up to holomorphic terms. This formula clearly reduces to the above
two in those  cases, consistent with the result given by Crowdy in \cite{C1},
\cite{C2}. There he had used the Schottky-Klein prime function, 
which he was able to
compute numerically for certain hyperelliptic curves.

There are similar problems with general non-hyperelliptic curves, 
two particular examples
of which were studied in \cite{BG3} and \cite{EG}. 
In the latter case, it was noted that not only $\sigma$, 
but all its first derivatives, vanished for  $u \in \WW^1$, a 
phenomenon which also is found for higher-order hyperelliptic curves.
The expression for the reduction given in \cite{EG} was thus given in 
terms of a logarithmic derivative of a second derivative of $\sigma$.

The methods used in \cite{MP1, MP2}, and above, 
are considerably more powerful; as these have enabled us to show a  
simple relationship 
between the Prime form and $\sigma$, finding a general expression for 
such reductions should now be comparatively straightforward.
The analogous formulae to those given above, in these cases, 
are expected to reduce to a sum of $r$ logarithmic derivatives, each having
a simple pole on one sheet, as $p\rightarrow \infty$. 
The approach given in \cite{G} suggests the appropriate
formula for this mapping in this case is, up to holomorphic terms,
$$ \lambda \simeq k \sum_{i=0}^{r-1} \zeta^i \frac{\partial}{\partial v_1}\ln(\cE(u-
\hat{\zeta}^{-i} v)),$$
where $k$ is a known constant. 

For the cyclic $(3,4)$ case, this lets us write down:
$$\lambda \simeq k \sum_{i=0}^{2} \zeta^i \frac{\partial}{\partial v_1}\ln(\sigma(u-
\hat{\zeta}^{-i} v)),$$
where $k$ is a known constant.
This should be contrasted with an analogous expression for the momentum $p$ in terms of $u$
$$p \simeq k \sum_{i=0}^{2}  \frac{\partial}{\partial v_1}\ln(\sigma(u-
\hat{\zeta}^{-i} v)).$$
The mapping from $p$ to $\lambda$ thus satisfies
$$\lambda \simeq \zeta^i p +O(1)\qquad 
\mbox{as} u\rightarrow \hat{\zeta}^{i} v.$$ 

\subsection{Further applications}
Other conformal mapping problems might potentially also become more 
tractable using this approach. 
Particular examples include mathematical problems such as the 
construction of multiply-connected quadrature domains \cite{CM}, 
or various related 
physical problems such as vortex dynamics or Stokes flow in 
multiply-connected two dimensional regions.

\section{References}

\vspace{1cm}

\smallskip
\noindent
{John \textsc{Gibbons}}\\
{Imperial College\\
180 Queen's Gate\\
London SW7 2BZ, UK\\
E-mail {j.gibbons@imperial.ac.uk}}

\smallskip
\smallskip

\noindent
{Shigeki  \textsc{Matsutani}}\\
{8-21-1 Higashi-Linkan,\\
Minami-ku, Sagamihara 252-0311,\\
JAPAN\\
E-mail {rxb01142@nifty.com}}

\smallskip
\smallskip
\noindent
{Yoshihiro \textsc{\^Onishi}}\\
{Department of Mathematics \\
Faculty of Education and Human Sciences, \\
University of Yamanashi,\\
4-3-11, Takeda, Kofu 400-8511, \\
JAPAN\\
E-mail {yonishi@yamanashi.ac.jp}}
\label{finishpage}

\end{document}